\documentclass[11pt]{amsart}  

\usepackage[top=1.1in, bottom=1.1in, left=1.2in, right=1.2in]{geometry}
\usepackage{amsmath}
\usepackage{amssymb}
\usepackage{amsthm}

\usepackage{enumerate} 

\usepackage{graphicx}

\usepackage[ 
margin=0.04\textwidth
]{caption} 

\usepackage{color} 
\newcommand{\yo}[1]{\textcolor{black}{#1}}

\usepackage{hyperref}

\theoremstyle{plain}
\newtheorem{apthm}{Theorem}

\newtheorem{thm}{Theorem}

\newtheorem{theorem}{Theorem}[section]
\newtheorem{proposition}[theorem]{Proposition}
\newtheorem{lemma}[theorem]{Lemma}

\newtheorem*{claim}{Claim}
\newtheorem{mclaim}{Claim}

\theoremstyle{definition}
\newtheorem{definition}[theorem]{Definition}

\newtheorem*{acknowledgements}{Acknowledgements}

\theoremstyle{remark}
\newtheorem{fact}[theorem]{Fact}
\newtheorem{remark}[theorem]{Remark}

\newcommand{\rd}{\mathrm{d}}

\newcommand{\bP}{\mathbb{P}}
\newcommand{\sP}{\mathsf{P}} 
\newcommand{\bN}{\mathbb{N}}
\newcommand{\bC}{\mathbb{C}}
\newcommand{\bK}{\mathbb{K}}
\newcommand{\sC}{\mathsf{C}}

\newcommand{\sL}{\mathsf{L}}
\newcommand{\sJ}{\mathsf{J}}
\newcommand{\sH}{\mathsf{H}} 

\newcommand{\cO}{\mathcal{O}}
\newcommand{\cM}{\mathcal{M}}

\newcommand{\sR}{\mathsf{R}}
\newcommand{\bR}{\mathbb{R}}
\newcommand{\sT}{\mathsf{T}}
\newcommand{\sV}{\mathsf{V}}

\newcommand{\sZ}{\mathsf{Z}}
\newcommand{\sfO}{\mathsf{\Omega}}
\newcommand{\diam}{\operatorname{diam}}
\newcommand{\supp}{\operatorname{supp}}

\newcommand{\Res}{\operatorname{Res}}
\newcommand{\Rat}{\operatorname{Rat}}
\newcommand{\MinResLoc}{\operatorname{MinResLoc}}

\newcommand{\FP}{\operatorname{FP}}
\newcommand{\Fix}{\operatorname{Fix}}
\newcommand{\Span}{\operatorname{Span}} 
\newcommand{\Crucial}{\yo{\operatorname{hypRes}}}
\newcommand{\ordRes}{\operatorname{ordRes}}
\newcommand{\BC}{\operatorname{BC}}
\newcommand{\crit}{\operatorname{crit}}
\newcommand{\PGL}{\operatorname{PGL}}
\newcommand{\SL}{\operatorname{SL}}
\newcommand{\val}{\operatorname{val}}

\newcommand{\widevec}[1]{\overrightarrow{#1}}

\numberwithin{equation}{section}

\begin{document}

\title[\yo{Resultant measure}s and minimal resultant loci]{\yo{Resultant measure}s and minimal resultant loci for non-archimedean polynomial \yo{dynamics}}

\author{Hongming Nie}
\address{Institute for Mathematical Sciences, Stony Brook University, 100 Nicolls Road, Stony Brook, NY 11794, USA}
\email{hongming.nie@stonybrook.edu}

\author{Y\^usuke Okuyama}
\address{Division of Mathematics, Kyoto Institute of Technology, Sakyo-ku, Kyoto 606-8585 JAPAN}
\email{okuyama@kit.ac.jp}

\date{\today}

\subjclass[2010]{Primary 37P50; Secondary 11S82, 37P05}
\keywords{\yo{resultant measure}, minimal resultant locus, 
\yo{hyperbolic resultant function},
semistable reduction, 
 Berkovich dynamics.}

\begin{abstract}
 We compute the \yo{resultant measure}s for iterations $P^j$, $j\ge 1$,
 of a polynomial $P$ of degree $>1$
 on the $n$-th level Trucco's trees $\Gamma_n$, $n\ge 0$,
 in the Berkovich projective line
 over a non-archimedean field and also determine their barycenters.
 As applications, we study the asymptotic of those barycenters
 as $n\to\infty$, and establish a uniform stationarity of Rumely's minimal resultant loci of $P^j$ or equivalently that of the potential semistable reduction loci of $P^j$ as $j\to\infty$. 
 We also establish several equidistribution results for
 the \yo{resultant measure}s themselves as $n\to\infty$.
\end{abstract}

\maketitle

\section{Introduction}
\label{sec:intro}

Let $K$ be an algebraically closed field 
that is complete with respect to a non-trivial 
and non-archimedean absolute value $|\cdot|$. 
The ring $\{|z|\le 1\}$
of $K$-integers and its unique maximal ideal 
$\{|z|<1\}$ are denoted by $\cO_K$ and $\cM_K$, respectively.
The classical projective line $\bP^1=\bP^1(K)$ is compactly augmented
by the Berkovich projective line $\sP^1=\sP^1(K)$, which has
a canonical (non-strict)
ordering $\prec$ (in which $\infty\in\bP^1$ is the maximum element)
and an induced profinite tree structure.
The Berkovich upper half space $\sH^1:=\sP^1\setminus\bP^1$ is also dense in $\sP^1$,
and the metric space $(\sH^1,\rho)$, where $\rho$
is the Berkovich hyperbolic metric, is called
the Berkovich hyperbolic space.
The action on $\bP^1$ of a rational function $\phi\in K(z)$ of degree $>1$
canonically extends to an endomorphism of $\sP^1$ preserving both $\bP^1$
and $\sH^1$, and the induced action on $(\sH^1,\rho)$ is
non-(strictly)contracting.
Recall also that $\sH^1$ is divided into the totalities $\sH^1_{\mathrm{II}}$,
$\sH^1_{\mathrm{III}}$, and $\sH^1_{\mathrm{IV}}$ of points of 
types II, III, and IV, respectively, and 
$\sH^1_{\mathrm{II}}$ is already dense in $\sP^1$.

In \cite{Rumely13, Rumely17}, 
Rumely introduced a \yo{\em resultant function} $\ordRes_\phi$ for $\phi$ on 
$\sP^1$ and established foundational properties of its minimum locus, 
which is called the {\em minimal resultant locus} and denoted by $\MinResLoc_\phi$; $\ordRes_\phi$
is induced by the $\PGL(2,\cO_K)$-conjugacy invariant function
\begin{gather*}
\PGL(2,K)\ni M\mapsto\log\bigl|\bigl(\text{the resultant of }P(z)
\text{ and }Q(z)\bigr)\bigr|\in[0,+\infty)\\
\text{writing }(M\circ\phi\circ M^{-1})(z)=\frac{P(z)}{Q(z)},\quad
\text{where }P(z)=\sum_{i=0}^{\deg\phi}a_i^{(M)}z^i,\quad
Q(z)=\sum_{j=0}^{\deg\phi}b_j^{(M)}z^j,\\
\text{and }\max\bigl\{|a_0^{(M)}|,\ldots,|a_{\deg\phi}^{(M)}|,|b_0^{(M)}|,\ldots,|b_{\deg\phi}^{(M)}|\bigr\}=1.
\end{gather*}
Analytically,
Rumely characterized $\MinResLoc_\phi$ 
as the barycenter of the crucial measure 
$\nu_\phi$ for $\phi$(, which 
is defined using the Laplacian
of $\ordRes_\phi$ on the Fix-Rep tree for $\phi$ in $\sP^1$). 
Moduli-theoretically, Rumely identified the type II point locus
in $\MinResLoc_\phi$
with the {\em potential semistable reduction locus}
in the left orbit space $\PGL(2,\cO_K)\backslash\PGL(2,K)$
(see also \cite{STW14} for this locus 
in any dimension). In \cite{Okuyama20}, the second author established 
a hyperbolic geometric formula of $\ordRes_\phi$ and
generalized Rumely's crucial measure for $\phi$ 
to the {\em \yo{resultant measure}s} for $\phi$
associated to any non-trivial finite subtrees in $\sP^1$,
introducing the {\em \yo{hyperbolic resultant} function} $\Crucial_\phi$ for $\phi$ on $(\sH^1,\rho)$ (see Section \ref{sec:curvaturetreevertex})\footnote{\yo{In this paper, we adopt those more informative terminologies, the {\em resultant measure} and the {\em hyperbolic resultant function}. Since they are related to Rumely's crucial measure (see Remark \ref{th:crucialtree} below), the terminologies, {\em crucial curvature} and the {\em crucial function}, respectively would also be adequate (cf.\ \cite{Okuyama20}).}}.
In \cite{Kiwi18}, Kiwi and the first author further developed
the study of GIT semistability of rational functions.

Our aim is to contribute to the study of
\yo{resultant measure}s for iterations of polynomials.
For a polynomial $P\in K[z]$ of degree $>1$,
reformulating the original definition in \cite{Trucco14},
we introduce {\em the $n$-th level Trucco's trees} in $\sP^1$
as the iterated preimages
\begin{gather*}
\Gamma_n:=P^{-n}([\xi_P,\infty])
\end{gather*}
of the \yo{closed} interval $[\xi_P,\infty]$
\yo{between the point $\xi_P$ and the point $\infty$}
in $\sP^1$ under $P^n$,
$n\ge 0$, where $\xi_P\in\sH^1_{\mathrm{II}}$ is the 
{\em base point} for $P$
(Rivera-Letelier \cite{Rivera00}, see Subsection \ref{sec:base}),
\yo{so that $\Gamma_n$} increasingly tends 
to the {\em \yo{$\infty$-th level Trucco's} tree} $\Gamma_\infty$,
which is spanned by the Berkovich Julia set of $P$ and \yo{the point} $\infty$ in $\sP^1$, as $n\to\infty$ (see Section \ref{sec:tree} for the details).
For every $j\ge 1$ and every $n\ge 0$, 
using the Laplacian
of $\ordRes_{P^j}$ on the tree $\Gamma_n$, 
the {\em $\Gamma_n$-\yo{resultant measure}} $\nu_{P^j,\Gamma_n}$ for $P^j$
associated to $\Gamma_n$ is defined as
a (signed) Radon measure on $\Gamma_n$ having the total mass $1$,
and the barycenter $\BC(\nu_{P^j,\Gamma_n})$ on $\Gamma_n$ of
the Radon measure $\nu_{P^j,\Gamma_n}$ is defined as usual although
$\nu_{P^j,\Gamma_n}$ is not necessarily positive
(see Subsections \ref{sec:crucialfunc} and \ref{sec:slopeweight}).

Our first main result is a complete computation of
the measure $\nu_{P^j,\Gamma_n}$
in terms of the valency $v_{\Gamma_n}(\cdot)$ 
on the tree $\Gamma_n$ (see Subsection \ref{sec:subtree})
and the local degree $\deg_{\,\cdot}P^j$ on $\Gamma_n$.
See Theorem \ref{thm:support} below
for a more detailed version of Theorem \ref{introthm:support}.

\begin{thm}\label{introthm:support} 
Let $P\in K[z]$ be a polynomial of degree $d>1$. Then 
 for every $j\ge 1$, every $n\ge 0$, and every $\xi\in\Gamma_n$,
 we compute
 \begin{gather*}
 \nu_{P^j,\Gamma_n}(\{\xi\})=\frac{1}{d^j-1}\cdot
 \begin{cases}
 0 & \text{if }\xi=\infty,\\
 v_{\Gamma_n}(\xi)-2 & \text{if }v_{\Gamma_n}(\xi)\ge 2,\\
 \deg_{\xi}(P^j)-1 & \text{if }v_{\Gamma_n}(\xi)=1\text{ and }P^j(\xi)\in[\xi,\infty),\\
 -1 & \text{if }v_{\Gamma_n}(\xi)=1\text{ and }P^j(\xi)\not\in[\xi,\infty].
 \end{cases}
\end{gather*}
In particular, $\nu_{P^j,\Gamma_n}$ is supported on 
(finitely many) branch or end points of $\Gamma_n$.
\end{thm}

For every $j\ge 0$ and every $n\ge 0$, 
the {\em $\Gamma_n$-\yo{resultant} tree} $\sT_{P^j,\Gamma_n}$ 
associated to $P^j$ is the subtree in $\Gamma_n$ 
spanned by the support of $\nu_{P^j,\Gamma_n}$,
and this tree $\sT_{P^j,\Gamma_n}$ is equipped with
the {\em $\Gamma_n$-\yo{resultant vertex} set} $\sV_{P^j,\Gamma_n}$,
which is the union of $\supp\nu_{P^j,\Gamma_n}$
and the totality of branch points of $\sT_{P^j,\Gamma_n}$,
as its vertex set.
Our second main result determines the shape and the location of
the barycenter $\BC(\nu_{P^j,\Gamma_n})$
in terms of $\sV_{P^j,\Gamma_n}$. 
See Theorem \ref{thm:bary} below
for a more detailed version of Theorem \ref{introthm:bary}.

\begin{thm}\label{introthm:bary}
Let $P\in K[z]$ be a polynomial of degree $>1$, and pick $j\ge 1$.
Then for every $n\ge 0$, 
there are (not necessarily distinct) points 
$\xi_n=\xi_n^{(j)},\eta_n=\eta_n^{(j)}\in\sV_{P^j,\Gamma_n}$ such that $\xi_{n+1}\prec\xi_n\prec\eta_n\prec\xi_P$
and that
$\BC(\nu_{P^j,\Gamma_n})$ is the interval $[\xi_n,\eta_n]$.
\end{thm}

From the above definition of the \yo{resultant function} $\ordRes_\phi$ for $\phi\in K(z)$, 
the dependence of $\MinResLoc_{\phi^j}$ on $j\ge 1$ 
is not obvious, even when $\phi\in K[z]$
(see also the paragraph after Theorem \ref{thm:minres}).
Our third main result is the approximation of $\MinResLoc_{P^j}$ by $\BC(P^j,\Gamma_n)$ as $n\to\infty$, and shows that 
$(\BC(\nu_{P^j,\Gamma_n}))_{n\ge 0}$ is indeed {\em stationary} if $j\ge d-1$, in a uniform way.

\begin{thm}\label{thm:minres}
Let $P\in K[z]$ be a polynomial of degree $d>1$.
\begin{enumerate}[{\em (i)}]
\item \label{item:bcmrl}
For every $j\ge 1$, we have the Hausdorff convergence
\begin{gather*}
 \lim_{n\to\infty}\BC(\nu_{P^j,\Gamma_n})=\MinResLoc_{P^j}
\text{ in }(\sH^1,\rho),
\end{gather*}
and $\MinResLoc_{P^j}=[\xi_\infty,\eta_\infty]\subset\Gamma_\infty$ for some $\xi_\infty=\xi_\infty^{(j)},\eta_\infty=\eta_\infty^{(j)}\in\Gamma_\infty$ such that $\xi_\infty\prec\eta_\infty\prec\xi_P$.
\item \label{item:mrlstat}
For every $j\ge d-1$, the \yo{identity}
\begin{gather*}
 \BC(\nu_{P^j,\Gamma_n})\yo{\equiv}\MinResLoc_{P^{d-1}}
\quad\text{for any }n\ge \yo{j-1}
\end{gather*}
of the sequence $(\BC(\nu_{P^j,\Gamma_n}))_{n\ge 0}$ holds, 
and each hand side is a singleton in $\Gamma_{d-2}\cap\sH^1_{\mathrm{II}}$.
\end{enumerate}
\end{thm}

From Theorem \ref{thm:minres} \yo{(or indeed as the final step in the proof of it)}, we \yo{establish} the identity
\begin{gather}
 \MinResLoc_{P^j}\equiv\MinResLoc_{P^{d-1}}\quad\text{for any }j\ge d-1,\label{eq:stationaryminres}
\end{gather}
which are singletons in $\Gamma_{d-2}\cap\sH^1_{\mathrm{II}}$.
By Rumely \cite{Rumely13, Rumely17}, 
for every $j\ge 1$, $\MinResLoc_{P^j}$ is contained
in the subtree in $\sP^1$
spanned by the classical fixed points of $P^j$.
However, it seems already not easy to control those subtrees 
under iteration of $P$.
Our present work overcomes this difficulty by studying the \yo{resultant measure}s on Trucco's trees.
The lower bound $d-1$ for $j$ in \eqref{eq:stationaryminres}
is best possible in that there is $P\in K[z]$ for which
$\MinResLoc_{P^{d-2}}\neq\MinResLoc_{P^{d-1}}$
(see Example \ref{sec:bestpossible}). 
One cannot expect to extend \eqref{eq:stationaryminres} to 
every rational function $\phi$; indeed, 
for every flexible Latt\'es map $\phi\in K(z)$ of even degree, 
$\MinResLoc_{\phi^j}$, $j\ge 1$, 
are pairwise distinct singletons
(\cite[Section 7]{JacobsWilliams16}).

The orbit space $\PGL(2,\cO_K)\backslash\PGL(2,K)$
is naturally isomorphic to $\sH^1_{\mathrm{II}}$,
and for every $j\ge 1$,
as we already mentioned,
the type II point locus in $\MinResLoc_{P^j}$
is identified with the potential semistable reduction locus for $P^j$ in 
$\PGL(2,\cO_K)\backslash\PGL(2,K)$.
We restate \eqref{eq:stationaryminres}
as a theorem for a future reference, 
in a manner independent of the Berkovich space theory.

\begin{thm}\label{thm:stationary}
Let $P\in K[z]$ be a polynomial of degree $d>1$. 
Then for every $j\ge d-1$, the potential
semistable reduction locus for $P^j$ 
in $\PGL(2,\cO_K)\backslash\PGL(2,K)$ coincides with
that for $P^{d-1}$, which is a singleton.
\end{thm}

Now, for each $j\ge 1$, we focus on the asymptotic
of the $\Gamma_n$-\yo{resultant measure}s 
$\nu_{P^j,\Gamma_n}$ themselves  as $n\to\infty$.
Noting that $\nu_{P^j,\Gamma_n}$, $j\ge 1$ and $n\ge 0$, 
is a (signed) Radon measure on $\sP^1$, 
we study the positive part $\nu^+_{P^j,\Gamma_n}$, 
the negative part $\nu^-_{P^j,\Gamma_n}$, 
and the total variation $|\nu_{P^j,\Gamma_n}|=\nu^+_{P^j,\Gamma_n}+\nu^-_{P^j,\Gamma_n}$ 
of $\nu_{P^j,\Gamma_n}$; the negative part $\nu^-_{P^j,\Gamma_n}$ does not vanish when $P$ is non-simple (see the next paragraph for (non-)simpleness) and $n>j$. Let 
\begin{gather*}
\overline{\nu^+_{P^j,\Gamma_n}},\quad   
\overline{\nu^-_{P^j,\Gamma_n}},\quad\text{and}\quad
\overline{|\nu_{P^j,\Gamma_n}|}
\end{gather*}
be the averaged versions as probability measures on $\sP^1$ of the above measures, respectively. 

The equilibrium (or canonical) measure on $\sP^1$
for $P$ 
is denoted by $\mu_P$
(\cite[\S10]{Baker10}, \cite[\S2]{ChambertLoir06}, \cite[\S3.1]{Favre10}).
Recall that $P$ is {\em simple} if $\mu_P=\delta_{\xi_P}$ on $\sP^1$,
and that $P$ is {\em tame} if 
the Berkovich ramification locus $\{\deg_{\,\cdot}(P)>1\}$ of $P$ is a finite
subtree in $\sP^1$ (see Subsection \ref{sec:simpleness} for more details).
Our final main result is the equidistribution of those 
averaged measures towards $\mu_P$.
A {\em new-type} equidistribution result on the iterated preimages of the base point $\xi_P$ 
which does not take into account their local degrees for $P^j$ (Theorem \ref{th:convergence} in Appendix)
plays a key role in the proof of Theorem \ref{thm:equi}. 

 \begin{thm}\label{thm:equi}
Let $P\in K[z]$ be a non-simple and tame polynomial 
of degree $>1$. Then for every $j\ge 1$, 
 all $\overline{\nu^+_{P^j,\Gamma_n}}$, $\overline{\nu^-_{P^j,\Gamma_n}}$ and $\overline{|\nu_{P^j,\Gamma_n}|}$
tend to $\mu_P$ weakly on $\sP^1$ as $n\to\infty$.
If $P$ is simple, then for every $n\ge 0$,
$\nu_{P^j,\Gamma_n}=\overline{\nu^+_{P^j,\Gamma_n}}=
\overline{|\nu_{P^j,\Gamma_n}|}\equiv\mu_P$ on $\sP^1$.
\end{thm}
  
We mention that 
Jacobs \cite{Jacobs17} established the weak convergence 
towards $\mu_\phi$ of Rumely's {\em crucial measures} $\nu_{\phi^j}$
\yo{(see Remark \ref{th:crucialtree} below)} for the iterations $\phi^j$
of $\phi\in K(z)$ of degree $>1$ as $j\to\infty$; 
see also \cite[Theorem 6]{Okuyama20} for its generalization and quantitative sharpening.

\subsection*{Organization of this paper}
In Section \ref{sec:pre}, we gather background materials 
from potential theory and dynamics
on the Berkovich projective line,
reductions of rational functions, 
and \yo{potential semistable reduction}. In Section \ref{sec:curvaturetreevertex},
we develop general properties
of the $\Gamma$-\yo{resultant measure}s $\nu_{\phi,\Gamma}$ and 
their barycenters $\BC_\Gamma(\phi,\Gamma)$ for $\phi$ associated to
subtrees $\Gamma$ in $\sP^1$.
In Section \ref{sec:tree}, we gather fundamental properties
of Trucco's trees $\Gamma_n$ and also of the vertex sets $V(\Gamma_n)$.
In Sections \ref{sec:support} and \ref{sec:pf-bary},
we show more detailed versions of
Theorems \ref{introthm:support} and \ref{introthm:bary}, respectively.
In Section \ref{sec:pf-minres}, we show 
Theorem \ref{thm:minres}, and
study the best possibility of 
the identity \eqref{eq:stationaryminres}
by examples. In Section \ref{sec:pf-equi}, 
we show Theorem \ref{thm:equi}.
In Appendix, 
we establish Theorem \ref{th:convergence}.

\section{Background}\label{sec:pre}

The residue class field
$k:=\cO_K/\cM_K$ is called the residual field of $K$. 
For more details on the foundational
studies of the Berkovich projective line,
we refer to \cite{Baker10, Benedetto19, Berkovich90, Favre10, Jonsson15}.

\subsection{Berkovich projective line}\label{sec:berk} 
The Berkovich projective line 
$\sP^1$ over $K$ is the totality of (generalized)
multiplicative seminorms on $K(z)$ which restrict to $|\cdot|$ on $K$
(see \cite[Definition 3.11]{Jonsson15}). 
The topology of $\sP^1$ is the Gel'fand (or weak) topology.
With this topology, 
$\sP^1$ is
a compact and Hausdorff space and, moreover, contains 
the (classical) projective line 
$\bP^1$ over $K$ as a dense subset identifying
$a\in\bP^1$  with  the evaluation seminorm on $K(z)$
at the point $a$.

Each point in $\sP^1$ is one of types I, II, III and IV.  
For any point $\xi\in\sP^1\setminus\{\infty\}$ of type I, II or III, 
there exists a unique $K$-closed disk 
\begin{gather*}
 B(a,r):=\{z\in K:|z-a|\le r\},\quad a\in K\text{ and }r\ge 0,
\end{gather*}
such that $\xi$ is the supremum seminorm 
$[h]_\xi=\sup_{z\in B(a,r)}|h(z)|$ on $K[z]$(, which extends to $K(z)$ 
possibly taking $+\infty$); then $\xi$ is
said to be represented by the $K$-closed disk $B_\xi:=B(a,r)$,
and we set
\begin{gather*}
 \diam(\xi):=\diam_{|\cdot|}(B_\xi),\quad\text{which indeed equals }r. 
\end{gather*}
A point $\xi\in\sP^1\setminus\{\infty\}$ 
of type I, II, or III is of type I if and only if $\diam\xi=0$, 
is of type II if and only if $\diam\xi\in|K^\times|$, 
and is of type III, otherwise. 
A type IV point $\xi\in\sP^1$ corresponds to 
(a cofinal equivalence class of) a decreasing sequence 
of $K$-closed disks the intersection of which is empty, 
and then the diameter $\diam\xi$
is defined by the decreasing limit $(>0)$ of the diameters of the corresponding decreasing disks.
The point $\infty\in\bP^1=\bP^1(K)=K\cup\{\infty\}$ corresponds
to the evaluation seminorm 
$[h]_\infty=|h(\infty)|$ on $K(z)$ (possibly taking $+\infty$), 
which is a type I point in $\sP^1$, so that $\bP^1$ is canonically 
embedded in $\sP^1$ as the set of all type I points. 
Let us denote by $\sH^1_{\mathrm{II}},\sH^1_{\mathrm{III}}$,
and $\sH^1_{\mathrm{IV}}$ the totality of type II, III, and IV
points, respectively. The point 
\begin{gather*}
 \xi_g\in\sH^1_{\mathrm{II}}\quad\text{such that }B_{\xi_g}=B(0,1)
\end{gather*}
is called the Gauss (or canonical) point in $\sP^1$.
Noting that two $K$-closed disks are disjoint
or nesting, the Berkovich projective line
$\sP^1$ is equipped with a (non-strict and partial) ordering $\prec$,
which extends the inclusion relation $\subset$ among $K$-closed disks 
(i.e., $\xi\prec\xi'$ iff $B_\xi\subset B_{\xi'}$)
having $\infty\in\bP^1$ as the unique maximal element of $\sP^1$
and induces a canonical (profinite) tree structure on $\sP^1$
in the sense of Jonsson (see \cite[\S2.1]{Jonsson15}).
For $\xi,\xi'\in\sP^1$, let $[\xi,\xi']$ 
be a unique (closed) interval in $\sP^1$ joining $\xi$ and $\xi'$, and then set
$(\xi,\xi'):=[\xi,\xi']\setminus\{\xi,\xi'\},
(\xi,\xi']:=(\xi,\xi')\cup\{\xi'\}$,
and $[\xi,\xi'):=(\xi,\xi')\cup\{\xi\}$; for example, for $\xi\prec\xi'$,
$[\xi,\xi']=[\xi',\xi]=\bigl\{\xi''\in\sP^1:\xi\prec\xi''\prec\xi'\bigr\}$.

For any $\xi_0,\xi_1,\xi_2\in\sP^1$, there is a unique point
\begin{gather*}
 \xi_1\wedge_{\xi_0}\xi_2\in\sP^1 
\end{gather*}
in the intersection of all the intervals 
$[\xi_0,\xi_1], [\xi_0, \xi_2]$, and $[\xi_1,\xi_2]$.
The Berkovich upper half space 
$\sH^1:=\sP^1\setminus\bP^1
=\sH^1_{\mathrm{II}}\sqcup\sH^1_{\mathrm{III}}\sqcup\sH^1_{\mathrm{IV}}$
is equipped with the Berkovich
hyperbolic metric $\rho$, which is written as 
\begin{gather*}
\rho(\xi_1,\xi_2)=2\log\diam(\xi_1\wedge_\infty\xi_2)-\log\diam(\xi_1)-\log\diam(\xi_2),\quad\xi_1,\xi_2\in\sH^1,
\end{gather*}
and the metric space $(\sH^1,\rho)$ is called the Berkovich hyperbolic space.
The topology of $(\sH^1,\rho)$ is stronger than the topology of $\sH^1$,
and $\sH^1_{\mathrm{II}}$ is dense in $(\sH^1,\rho)$, and hence in $\sP^1$.
Taking $+\infty$ in an appropriate way, the hyperbolic metric $\rho$
extends to a generalized metric on $\sP^1$
(see \cite[\S2.1]{Jonsson15}), which is denoted by $\tilde{\rho}$. 

\subsection{Tangent spaces and directional derivatives}\label{sec:tangent}
For each $\xi\in\sP^1$, the set of all components of $\sP^1\setminus\{\xi\}$ induces an equivalence relation on $\sP^1\setminus\{\xi\}$, and 
each equivalence class is called a 
tangent direction
of $\sP^1$ at $\xi$.
The tangent space $T_\xi\sP^1$ of $\sP^1$ at $\xi$
is the set of all tangent directions at $\xi$.
The component of $\sP^1\setminus\{\xi\}$ corresponding to
a tangent direction $\vec{v}\in T_\xi\sP^1$ is denoted by
\begin{gather*}
 U(\vec{v})=U_\xi(\vec{v}), 
\end{gather*}
and is also identified with a germ $\widevec{\xi\xi'}$ 
of a non-trivial interval $[\xi,\xi']$ for some (indeed any)
$\xi'\in U(\vec{v})$. 
The topology on $\sP^1$ has the quasi open basis
$\{U_\xi(\vec{v}):\xi\in\sP^1,\vec{v}\in T_\xi\sP^1\}$.
A subset in $\sP^1$ is called a (non-trivial)
Berkovich open (resp.\ closed)
ball in $\sP^1$ if it can be written as $U_\xi(\vec{v})$ 
(resp.\ $\sP^1\setminus U_\xi(\vec{v})$)
for some $\xi\in\sH^1_{\mathrm{II}}\cup\sH^1_{\mathrm{III}}$ and $\vec{v}\in T_\xi\sP^1$; we note that
\begin{gather*}
 \{\cdot\prec\xi\}=\sP^1\setminus U(\widevec{\xi\infty}).
\end{gather*}
A classical open/closed ball in $\bP^1$ is
the intersection 
of $\bP^1$ 
with a Berkovich open/closed ball in $\sP^1$, respectively.

For a point $\xi\in\sH^1$ and a tangent direction $\vec{v}\in T_{\xi}\sP^1$, 
let us denote by $\rd_{\vec{v}}$ the (distributional) directional derivative operator at $\xi$ with respect to $\vec{v}$ on the space of continuous functions $f$ on $(\sH^1,\rho)$,
so that if $f$ is also piecewise affine, then
\begin{gather*}
 \rd_{\vec{v}}f=\lim_{\xi'\to\xi\ \text{in}\ (U(\vec{v}),\rho)}
\frac{f(\xi')-f(\xi)}{\rho(\xi',\xi)}.
\end{gather*}

\subsection{Subtrees, Laplacians, and barycenters}\label{sec:subtree} 
A subtree $\Gamma$ in $\sP^1$ is
a closed and connected non-empty subset in $\sP^1$,
and is regarded as being equipped with neither vertices nor edges
unless they are explicitly specified. 
A subtree $\Gamma$ in $\sP^1$
is said to be spanned by a subset $S$ in $\sP^1$ if
\begin{gather*}
 \Gamma=\Span(S):=\bigcup_{\xi,\xi'\in S}[\xi,\xi'],
\end{gather*}
and is said to be finite (resp.\ trivial) if $\Gamma$ is
spanned by a finite (resp.\ singleton) subset in $\Gamma$.
For example, a singleton in $\sP^1$ is a trivial subtree in $\sP^1$,
and more generally, any closed interval $[\xi,\xi']$
in $\sP^1$ is a finite subtree in $\sP^1$.
For two subtrees $\Gamma, \Gamma'$ in $\sP^1$, 
we say $\Gamma$ is a subtree in $\Gamma'$ if $\Gamma\subset\Gamma'$,
and then both the retraction map $r_{\Gamma',\Gamma}:\Gamma'\to\Gamma$ 
and the inclusion map $\iota_{\Gamma,\Gamma'}:\Gamma\to\Gamma'$ 
are continuous; $\Gamma'$ is also called an {\em end extension} of $\Gamma$ if in addition
\begin{gather*}
 r_{\Gamma',\Gamma}\bigl(\partial\Gamma'\bigr)\subset\partial\Gamma.
\end{gather*}

For a subtree $\Gamma$ in $\sP^1$ and a point $\xi\in\Gamma$, setting
\begin{gather*}
 T_\xi\Gamma:=\bigl\{\vec{v}\in T_\xi\sP^1:U(\vec{v})\cap\Gamma\neq\emptyset\bigr\}, 
\end{gather*}
the valency of $\Gamma$ at $\xi$ is defined by
\begin{gather*}
v_{\Gamma}(\xi):=\#T_\xi\Gamma\in\bN\cup\{0,+\infty\}.
\end{gather*}
Then $v_{\Gamma}(\xi)$ takes the value $0$ 
only when $\Gamma$ is trivial. 
We call $\xi\in\Gamma$ a branch (resp.\ end) point of $\Gamma$ 
if $v_{\Gamma}(\xi)>2$ (resp.\ $v_{\Gamma}(\xi)\in\{0,1\}$), and set
\begin{gather*}
 \partial\Gamma:=\{\text{end points of }\Gamma\},
\end{gather*}
which may not be the topological boundary of $\Gamma$ in $\sP^1$;then
$\Gamma=\bigcup_{\xi'\in\partial\Gamma\setminus\{\xi\}}[\xi,\xi']$
for any $\xi\in\Gamma$. For example, 
as a finite subtree in $\Gamma$, an interval $[\xi,\xi']$ has no branch points,
and we have $\partial[\xi,\xi']=\{\xi,\xi'\}$.
For a finite subtree $\Gamma$ in $\sP^1$, 
the (averaged) $\Gamma$-\emph{valency measure}
 \begin{align*}
  \nu^{\val}_\Gamma:=&(-2)^{-1}\cdot\sum_{\xi\in\Gamma}\bigl(v_\Gamma(\xi)-2\bigr)\cdot(r_{\sP^1,\Gamma})_*\delta_\xi
 \end{align*} 
 is a (signed) Radon measure on $\Gamma$, which has
the total mass $1$ by the Euler genus theorem.

A continuous function $f$ on $\sP^1$ is said to be of $C^1$ if 
there is a finite subtree $\Gamma$ in $\sP^1$ 
such that $f$ is piecewise affine on $(\Gamma\cap\sH^1,\rho)$
and locally affine on $(\sP^1,\tilde{\rho})$
and that $f=(r_{\sP^1,\Gamma})^*(f|\Gamma)$ on $\sP^1$.
Any continuous function on $\sP^1$ is uniformly approximated 
by $C^1$-functions on $\sP^1$ (see \cite[Proposition 5.4]{Baker10}). 
For a non-trivial finite subtree $\Gamma$ in $\sP^1$,
let us also denote by $\Delta_\Gamma$ 
the (distributional) Laplacian on $\Gamma$ so that for 
every $C^1$ function $f$ on $\sP^1$ and 
every $\xi\in\Gamma\cap\sH^1$, 
\begin{gather*}
(\Delta_\Gamma f)(\{\xi\})=\sum_{\vec{v}\in T_{\xi}\Gamma}\rd_{\vec{v}}f.
\end{gather*}
For example, we have 
$\nu^{\val}_\Gamma=
(-2)^{-1}\Delta_{\Gamma}(\rho(\cdot,\xi_0))+(r_{\sP^1,\Gamma})_*\delta_{\xi_0}$ on $\Gamma$ for any $\xi_0\in\Gamma$.
We dispense with 
the Laplacian on a (possibly non-finite) subtree in $\sP^1$ in this paper. 

For a subtree $\Gamma$ in $\sP^1$, 
the {\em barycenter} $\BC_{\Gamma}(\nu)$
for a (not necessarily non-negative) Radon measure $\nu$ on $\Gamma$ 
having the total mass $1$ is defined by
\begin{gather*}
 \BC_{\Gamma}(\nu):=\Bigl\{\xi\in\Gamma: \nu(U(\vec{v}))\le\frac{1}{2}\quad\text{for every }\vec{v}\in T_{\xi}\sP^1\Bigr\};
\end{gather*}
here and below, the pushforward $(\iota_{\Gamma,\sP^1})_*\nu$ of $\nu$
is denoted by the same notation $\nu$, so that, e.g., for every $\xi\in\Gamma$ and 
every direction $\vec{v}\in T_\xi\Gamma$, we write 
\begin{gather*}
 \nu(U(\vec{v}))=\nu(U(\vec{v})\cap\Gamma)
\quad\text{and}\quad
\nu(\sP^1\setminus U(\vec{v}))
=\nu\bigl((\sP^1\setminus U(\vec{v}))\cap\Gamma\bigr),
\end{gather*}
and also $\nu(\{\cdot\prec\xi\})=\nu(\{\cdot\prec\xi\}\cap\Gamma)$
for every $\xi\in\Gamma$, for simplicity.

\subsection{The canonical action on $\sP^1$ of a rational function}\label{sec:locdeg}
The action on $\bP^1$ of a  rational function $\phi\in K(z)$
extends continuously to $\sP^1$ 
by $[h]_{\phi(\xi)}=[h\circ\phi]_\xi$, $\xi\in\sP^1$ and $h\in K(z)$.
Suppose in addition $\deg\phi>0$.
Then the above canonical action on $\sP^1$
of $\phi$ is still open, surjective, and
fiber-discrete and preserves types of points in $\sP^1$.
The local degree function $\deg_{\,\cdot}(\phi)$ on $\bP^1$
also extends to a function 
$\sP^1\to\{1,\ldots,\deg\phi\}$ canonically and upper semicontinuously,
and the extended $\phi$ is still $(\deg\phi)$-to-one 
taking into account the (extended) local degree 
$\deg_{\,\cdot}(\phi)$ of $\phi$.
The Berkovich ramification locus of $\phi$ is defined by 
\begin{gather*}
  \sR_\phi:=\bigl\{\xi\in\sP^1: \deg_{\xi}(\phi)>1\bigr\}, 
\end{gather*}
which is
closed in $\sP^1$ and has no isolated points.
The (classical) critical set of $\phi$ is defined and characterized as
\begin{gather*}
  \crit_\phi:=\bigl\{z\in\bP^1:\phi'(z)=0\text{ (in an affine
coordinate in }\bP^1)\bigr\}=\sR_\phi\cap\bP^1,
\end{gather*} 
which is possibly uncountable when $\operatorname{char}K>0$. 

\begin{fact}\label{th:transitive}
 The projective general linear group $\PGL(2,K)$,
 which is identified with the totality of the linear fractional transformations
 $M$ of $\bP^1$, acts automorphically on $\sP^1$, isometrically on $(\sH^1,\rho)$,
 and transitively on $\sH^1_{\mathrm{II}}$. The subgroup $\PGL(2,\cO_K)$ of $\PGL(2,K)$ is the stabilizer of $\{\xi_g\}$ and acts on $\PGL(2,K)$ by {\em left} multiplication.  The left orbit space 
 \begin{gather*}
 \PGL(2,\cO_K)\backslash\PGL(2,K) 
 \end{gather*}
 is embedded onto $\sH^1_{\mathrm{II}}$ in $(\sH^1,\rho)$ by identifying the orbit $[M]=\PGL(2,\cO_K)M$ of each $M\in\PGL(2,K)$ with $M^{-1}(\xi_g)\in\sH^1_{\mathrm{II}}$.\footnote{Some authors adopt the identification of the right $\PGL(2,\cO_K)$-orbit $M\PGL(2,\cO_K)$ of $M$ with $M(\xi_g)$ by considering the the conjugation of $\phi$ under $M^{-1}$ rather than under $M$.}
\end{fact}

\begin{fact}
We canonically identify 
the tangent space $T_{\xi_g}\sP^1$ with 
the projective line $\bP^1(k)$
by identifying 
each direction $\widevec{\xi_gz}\in T_{\xi_g}\sP^1$, 
$z\in\cO_K$, with the point $\hat{z}:=z+\cM_K\in k$, and $\widevec{\xi_g\infty}$ with $\infty\in\bP^1(k)$.
For every $\xi\in\sH^1_{\mathrm{II}}$, fixing some $M\in\PGL(2,K)$ sending $\xi$ to $\xi_g$, we also identify 
$T_\xi\sP^1$ with $\bP^1(k)$.
\end{fact}

For every $\xi\in\sP^1$, $\phi$ induces the tangent map
\begin{gather*}
\phi_*=(\phi_*)_\xi: T_\xi\sP^1\to T_{\phi(\xi)}\sP^1 
\end{gather*}
of $\phi$ at $\xi$ so that for every $\widevec{\xi\xi'}\in T_\xi\sP^1$,
if $\xi'$ is close enough to $\xi$, then ($\phi(\xi')\neq\phi(\xi)$ and)
\begin{gather*}
 \phi_*(\widevec{\xi\xi'})=\widevec{\phi(\xi)\phi(\xi')}.
\end{gather*}
When $\xi\in\sH^1_{\mathrm{II}}$, 
$(\phi_*)_\xi$ is regarded as a rational function on $\bP^1(k)$ 
of degree $\deg_\xi\phi$ under an identification of both $T_\xi\sP^1$ and $T_{\phi(\xi)}\sP^1$ 
with (different copies of) $\bP^1(k)$, and then for every $\vec{v}\in T_{\xi}\sP^1$, 
the {\em directional local degree} 
$m_\phi(\vec{v})\in\{1,\ldots,\deg_\xi(\phi)\}$ of $\phi$ on $U(\vec{v})$
is nothing but the local degree of this rational function
$(\phi_*)_\xi$ on $\bP^1(k)$
at the point in $\bP^1(k)$ corresponding to the direction $\vec{v}$.
When $\xi\in\sP^1\setminus\sH^1_{\mathrm{II}}$,
we set $m_\phi(\vec{v}):=\deg_\xi(\phi)$ for every $\vec{v}\in T_{\xi}\sP^1$.
For every $\xi\in\sP^1$
and every $\vec{w}\in T_{\phi(\xi)}\sP^1$, 
we have
\begin{gather}
 \sum_{\vec{v}\in T_\xi\sP^1\text{ such that }\phi_*\vec{v}=\vec{w}}m_\phi(\vec{v})=\deg_\xi(\phi).
\label{eq:totaldirect}
\end{gather}
For every $\xi\in\sP^1$ and every $\vec{v}\in T_{\xi}\sP^1$, 
if $\xi'\in U(\vec{v})$ is close enough to $\xi$, then
$\phi:[\xi,\xi']\to[\phi(\xi),\phi(\xi')]$ is a homeomorphism such that
\begin{gather}
 \rho\bigl(\phi(\xi), \phi(\xi')\bigr)=m_\phi(\widevec{\xi\xi'})\cdot\rho(\xi,\xi')
\quad\text{for any }\xi_1,\xi_2\in(\xi,\xi'),\label{eq:multiplicity}
\end{gather}
which also holds for any $\xi_1,\xi_2\in[\xi,\xi')$ when $\xi\in\sH^1$
(see \cite[Proposition 3.5]{Rivera05} and \cite[Theorem 4.7]{Jonsson15}).

For every $\xi\in\sP^1$ and every $\vec{v}\in T_{\xi}\sP^1$,
there is also the {\em surplus local degree}
$s_\phi(\vec{v})\in\{0,1,\ldots,\deg\phi\}$ of $\phi$ on $U(\vec{v})$ such that
for every $\xi'\in\sP^1$
satisfying the defining equality
\begin{gather}
 (\phi^*\delta_{\xi'})(U(\vec{v}))=s_\phi(\vec{v})
+\begin{cases}
0&\text{if }\xi'\in \sP^1\setminus U(\phi_*\vec{v}),\\
m_\phi(\vec{v})
&\text{if }\xi'\in U(\phi_*\vec{v})
\end{cases}\label{eq:surplusdefining}
\end{gather}
\yo{(\cite[Proposition 3.10]{Faber13I}; see also 
\cite[Lemma 2.1]{Rivera03} and \cite[Proposition 9.41]{Baker10}).}
It follows that $s_\phi(\vec{v})>0$ for at most finitely many $\vec{v}\in T_\xi\sP^1$ and that 
\begin{gather}
 \sum_{\vec{v}\in T_\xi\sP^1}s_\phi(\vec{v})=\deg\phi-\deg_\xi(\phi).\label{eq:totalsurplus}
\end{gather}

\subsection{The mapping properties of a polynomial}
We gather some mapping properties of a polynomial $P\in K[z]$
of degree $>1$ below, which are consequences of 
\eqref{eq:totaldirect},
\eqref{eq:multiplicity}, \eqref{eq:surplusdefining} and \eqref{eq:totalsurplus}
and will be repeatedly used in this paper.

\begin{fact}\label{th:polynomial}
Let $P\in K[z]$ be a polynomial degree $d>1$.
\begin{enumerate}[(i)]
 \item \label{head:direcsurp}
\begin{itemize}
  \item For every $\xi\in\sP^1\setminus\{\infty\}$,
	the direction $\widevec{\xi\infty}$ is the unique $\vec{v}\in T_\xi\sP^1$ such that $P_*\vec{v}=\widevec{P(\xi)\infty}$, and we have
	$m_P(\widevec{\xi\infty})=\deg_\xi(P)$
	and $s_P(\widevec{\xi\infty})=d-\deg_\xi(P)$.
 \item On the other hand,
       for every $\xi\in\sP^1\setminus\{\infty\}$
       and every $\vec{v}\in T_\xi\sP^1\setminus\{\widevec{\xi\infty}\}$,
       we have $s_P(\vec{v})=0$,
       $P$ restricts to a surjective proper map 
       $U(\vec{v})\to U(P_*\vec{v})$ of degree $m_P(\vec{v})$,
       and for every $\xi\in\sP^1\setminus\{\infty\}$,
       every $\vec{v}\in T_\xi\sP^1\setminus\{\widevec{\xi\infty}\}$,
       and every $\xi'\in P^{-1}(\xi)$, the family $U(\vec{w})$,
       $\vec{w}\in T_{\xi'}\sP^1\setminus\{\widevec{\xi'\infty}\}$ satisfying
       $P_*\vec{w}=\vec{v}$, of Berkovich open balls in $\sP^1$ is
       a partition of $\{\cdot\prec\xi'\}\cap P^{-1}(U(\vec{v}))$.
 \item Moreover, for every $\xi\in\sP^1$,
       the family $\{\cdot\prec\xi'\}$, $\xi'\in P^{-1}(\xi)$,
       of Berkovich closed balls in $\sP^1$
       is a partition of $P^{-1}(\{\cdot\prec\xi\})$;
       for every $\xi\in\sP^1$ and
       every $\xi'\in P^{-1}(\xi)$, 
       we have $(P^*\delta_{\xi''})(\{\cdot\prec\xi'\})
       =\deg_{\xi'}(P)$
       for every $\xi''\in\{\cdot\prec\xi\}$, and
       $P$ restricts to a surjection $\{\cdot\prec\xi'\}\to\{\cdot\prec\xi\}$.
 \end{itemize} 
\item \label{head:locdegdirec}
      For every $\xi\in\sP^1\setminus\{\infty\}$,
\begin{enumerate}
  \item \label{item:locdegdirect}
	if $\xi'\in(\xi,\infty)$ is close enough to $\xi$, then
      $m_P\bigl(\widevec{\xi'\xi}\bigr)
       =m_P\bigl(\widevec{\xi'\infty}\bigr)
      =m_P(\widevec{\xi\infty})=\deg_\xi(P)=\deg_{\xi'}(P)$;
 \item \label{item:increasing}
       both 
       the functions
       $\xi'\mapsto\deg_{\xi'}(P)$ and 
       $\xi'\mapsto m_P(\widevec{\xi'\xi})$ are non-decreasing, 
       and $m_P(\widevec{\xi'\xi})\le\deg_{\xi'}(P)$,
       on $((\xi,\infty],\prec)$.
\end{enumerate}
  \item \label{head:monotone}
	For every $\xi\in\sP^1\setminus\{\infty\}$,
 \begin{enumerate}
 \item \label{item:ordpres} 
  $P([\xi,\infty])=[P(\xi),\infty]$, and $P$ restricts to a homeomorphism 
       $[\xi,\infty]\to[P(\xi),\infty]$
       preserving $\prec$.
       In particular, if $P(\xi)\prec\xi$, then $P^j(\xi)\prec\xi$
       for every $j\ge 1$;
  \item \label{item:nondechyp}
	$P:(\xi,\infty)\to(P(\xi),\infty)$
       does not strictly decrease the Berkovich hyperbolic metric $\rho$,
       and moreover, 
$\rho(P(\xi_1),P(\xi_2))\ge(\deg_{\xi}P)\cdot\rho(\xi_1,\xi_2)$ for any $\xi_1,\xi_2\in(\xi,\infty)$.
 \end{enumerate}
\item \label{head:treeback}
      For every $\xi\in\sP^1$ and every $\vec{v}\in T_\xi\sP^1$, 
      if $\vec{v}\neq\widevec{\xi\infty}$
      when $\xi\neq\infty$, then for every non-trivial subtree $\Gamma$ in 
      $U(\vec{v})\cup\{\xi\}$ such that $\xi\in\partial\Gamma$,
      every $\xi'\in P^{-1}(\xi)$, and every
      $\vec{w}\in T_{\xi'}\sP^1$ such that
      $P_*\vec{w}=\vec{v}$(, also noting that $\xi'\neq\infty$ 
      and $\vec{w}\neq\widevec{\xi'\infty}$,
      when $\xi\neq\infty$),
      \begin{gather*}
        \Gamma_{\vec{w}}:=\bigl(U(\vec{w})\cup\{\xi'\}\bigr)\cap P^{-1}(\Gamma)
      \end{gather*}       
      is a non-trivial subtree in $U(\vec{w})\cup\{\xi'\}$,
     $\partial(\Gamma_{\vec{w}})=
 (U(\vec{w})\cup\{\xi'\})\cap P^{-1}(\partial\Gamma)$,
      and $(U(\vec{w})\cup\{\xi'\})\cap P^{-1}(\{\text{branch points in }\Gamma\})\subset\{\text{branch points in }\Gamma_{\vec{w}}\}$,
      and then $\{\cdot\prec\xi'\}\cap P^{-1}(\Gamma)$ is 
      the union of all such $\Gamma_{\vec{w}}$.
\end{enumerate}
\end{fact}

\subsection{Equilibrium measure, and the simpleness and tameness}\label{sec:simpleness}

First, let us recall the (non-)simpleness of a rational function.
\begin{definition}
A rational function $\phi\in K(z)$ of degree $>1$ is {\em simple}
if for some $\xi_0\in\sH^1$, 
we have $\phi^{-1}(\xi_0)=\{\xi_0\}$, or equivalently,
have $\phi(\xi_0)=\xi_0$ and $\deg_{\xi_0}(\phi)=\deg\phi$.
\end{definition}
If $\phi$ is simple, then so is $\phi^j$ for every $j\ge 1$.
This simpleness of $\phi$ is characterized in terms of the
equilibrium (or canonical) measure $\mu_\phi$ for $\phi$ or
of the Berkovich Julia set 
$\sJ_\phi$ of $\phi$.
The pullback operator $\phi^*$ on the space of all Radon measures on $\sP^1$
is the transpose of the pushforward $\phi_*$ on 
the space of all continuous functions on $\sP^1$ so that
for each $\xi\in\sP^1$,
\begin{gather*}
 \phi^*\delta_{\xi}=\sum_{\xi'\in\phi^{-1}(\xi)}(\deg_{\xi'}(\phi))\delta_{\xi'}\quad\text{on }\sP^1, 
\end{gather*}
where $\delta_\xi$ is the Dirac measure at each point
$\xi\in\sP^1$ on $\sP^1$, and that
$\phi^*\nu=\int_{\sP^1}(\phi^*\delta_{\xi})\nu(\xi)$ on $\sP^1$
for a general Radon measure $\nu$ on $\sP^1$.
We note that $\mu_{\phi^j}=\mu_\phi$ on $\sP^1$ for every $j\ge 1$.
The Berkovich Julia set $\sJ_\phi$ of $\phi$
is defined by the support of the equilibrium measure $\mu_\phi$ for $\phi$; 
recall that one construction of $\mu_\phi$ is
\begin{gather}
 \mu_\phi=\lim_{n\to\infty}\frac{(\phi^n)^*\delta_\xi}{(\deg\phi)^n}
\quad\text{weakly on }\sP^1\quad\text{for every }\xi\in\sH^1.\label{eq:canonical}
\end{gather}
The Berkovich Fatou set of $\phi$ is defined
by $\sP^1\setminus\sJ_P$, each component of which is called a Berkovich Fatou component of $\phi$.
We note that
$\sJ_{\phi^j}=\sJ_\phi$ for every $j\ge 1$,
and that $\phi^{-1}(\sJ_\phi)=\sJ_\phi$. 
Any fixed point $\xi$ of $\phi$ in $\sH^1$
satisfying $\deg_\xi(\phi)>1$ is in $\sJ_\phi\cap\sH^1_{\mathrm{II}}$
(the fact that $\xi\in\sH^1_{\mathrm{II}}$ is by Rivera-Letelier \cite{Rivera03II})), 
and so is
a (classical) fixed point $z\in\bP^1$ of $\phi$
if and only if $|\phi'(z)|>1$ (in an affine coordinate in $\bP^1$).
The simpleness of $\phi$ is equivalent to 
that one of the following is the case for some $\xi\in\sH^1$:
\begin{itemize}
 \item $\sJ_\phi=\{\xi\}$,
 \item $\mu_\phi=\delta_{\xi}$ on $\sP^1$, and
 \item $(\ordRes_\phi)^{-1}(0)=\{\xi\}=\MinResLoc_\phi$ (see Section \ref{sec:curvaturetreevertex}).
\end{itemize}
\begin{fact}
A quantitative form of the construction \eqref{eq:canonical} of 
$\mu_{\phi}$ is
\begin{gather}
 \Bigl|\int_{\sP^1}f\Bigl(\frac{(\phi^n)^*\delta_\xi}{(\deg\phi)^n}-\mu_\phi\Bigr)\Bigr|
\le\frac{|\Delta f|(\sP^1)\cdot(\rho(\xi,\xi_g)+2\sup_{\sP^1}|g_\phi|)}{(\deg\phi)^n}
\label{eq:canonicalquant}
\end{gather}
for every $\xi\in\sH^1$, every $C^1$-test function $f$ 
on $\sP^1$, and every $n\ge 0$,
where $\Delta f=\Delta_\Gamma(f|\Gamma)$
for any finite subtree $\Gamma$ in $\sP^1$ such that
$f=(r_{\sP^1,\Gamma})^*(f|\Gamma)$ on $\sP^1$, 
where the uniform limit
\begin{align*}
 g_\phi
:=&\sum_{n=0}^\infty\frac{\int_{\sP^1}-\rho(\xi_g,\phi^n(\cdot)\wedge_{\xi_g}\xi')(\phi^*\delta_{\xi_g})(\xi')}{(\deg\phi)^n}\\
=&\lim_{n\to\infty}\frac{\log|P^n|}{(\deg P)^n}-\log|\operatorname{Id}|
\quad\text{when }\phi=P\in K[z]
\end{align*}
on $\sP^1$ is the dynamical Green function for $\phi$ on $\sP^1$
(\cite[\S10]{Baker10}, \cite[\S2]{ChambertLoir06}, \cite[\S3.1]{Favre10}).
\end{fact}

Next, let us recall the tameness of a rational function.
For the topology and geometry of the Berkovich ramification locus $\sR_\phi$ of $\phi$, 
we refer to \cite{Faber13I, Faber13II}.
There are at most $(\deg\phi)-1$ components of $\sR_\phi$ 
(Faber \cite[Theorem A]{Faber13I}). If there is a 
totally ramified point $\xi$ of $\phi$ in $\sP^1$ 
in that $\deg_\xi(\phi)=\deg\phi$ 
(for example, $\deg_\infty P=\deg P$ when $\phi$ is a polynomial $P$), then
$\sR_\phi$ is connected (Faber \cite[Theorem C]{Faber13I}). 

In general, each component of 
$\sR_\phi$ can be a quite wild subtree in $\sP^1$. 
This issue motivates the following.
\begin{definition}
 A rational function $\phi\in K(z)$ of degree $>1$
 is {\em tame} if each component of $\sR_\phi$ 
 is a finite subtree in $\sP^1$. 
\end{definition}
If $\phi$ is tame, then so is $\phi^j$ for every $j\ge 1$.
The following fact on the (classical) critical set $\crit_\phi$ of $\phi$
is useful; if $\phi$ is tame,
then 
\begin{gather*}
 \#\crit_\phi=2(\deg\phi)-2
\end{gather*}
taking into account the multiplicity of $\phi$ at each $c\in\crit_\phi$, 
and if there is also a totally ramified point of $\phi$ 
in $\sP^1$, then $\sR_\phi$ itself is spanned by the finite subset $\crit_\phi$
(\cite[Theorem C]{Faber13I}).

\subsection{Potential semistable reduction locus}\label{sec:res}

\yo{Recall some facts from the geometric invariant theory
(Hilbert-Mumford GIT \cite{MFK94}). Over a given field $\bK$ and a given integer $d>0$,
the linear algebraic group $\SL_2$ acts
on the projective space $\bP^{2d+1}=\bP^{2d+1}(\bK)$ as conjugation.
The set $\Rat_d$ of all degree $d$ rational functions
on $\bP^1$ is regarded as a hyperplane complement of $\bP^{2d+1}$,
and this $\SL_2$-conjugation action restricts to the (genuine) conjugation
action on $\Rat_d$. The GIT-semistable locus 
$(\bP^{2d+1})^{\operatorname{ss}}
=(\bP^{2d+1})^{\operatorname{ss}}(O(1))=(\bP^{2d+1})^{\operatorname{ss}}(\bK)$ 
with respect to the $\SL_2$-linearized line bundle $O(1)=O_{\bP^{2d+1}}(1)$ is, roughly speaking, the largest (Zariski open) subvariety $X$
in $\bP^{2d+1}$
where the quotient {\em variety} $X/\SL_2$ makes sense. More precisely,
the categorical quotient $(\bP^{2d+1})^{\operatorname{ss}}//\SL_2$
exists, and it is a projective variety. For more details, see Silverman \cite{Silverman98}.}

\yo{Let us come back to our setting.}
For a polynomial $F(X_0,X_1)\in\cO_K[X_0,X_1]$, the (coefficient) reduction 
$\widehat{F}(x_0,x_1)\in k[x_0,x_1]$ of $F$
is obtained by reducing the coefficients of
$F$ modulo $\cM_K$.
Let $\phi\in K(z)$ be a rational function of degree $>0$.
A  minimal lift $\Phi$ of this $\phi$ 
is a pair $(\Phi_0,\Phi_1)\in(\cO_K[X_0,X_1]_d)^2\setminus(\cM_K[X_0,X_1]_d)^2$ 
of homogeneous polynomials  such that
\begin{gather*}
 \phi(z)=\frac{\Phi_1(1,z)}{\Phi_0(1,z)}
\end{gather*}
(Here we follow the convention $\infty=[0:1]$ in the projective line
from the book \cite{FvdP04}), and
is unique up to multiplication in $\cO_K^\times$. 
The coefficient reduction of $\phi$ is the point 
$\hat{\phi}\in\bP^{2d+1}(k)$ represented by
the ordered $(2d+2)$-tuple 
in $k^{2d+2}\setminus\{(0,\ldots,0)\}$
of the coefficients of $\Phi_0,\Phi_1$
modulo $\cM_K$. We say this $\phi\in K(z)$ 
has a semistable reduction (modulo $\cM_K$) if 
\begin{gather*}
 \hat{\phi}\in(\bP^{2d+1})^{\operatorname{ss}}(k).
\end{gather*}
Rumely's moduli-theoretic characterization of $\MinResLoc_\phi$
({\cite[Theorem C]{Rumely17}}),
which is mentioned in Section \ref{sec:intro}, is the following.
\begin{theorem}\label{th:MinResLoc-stable}
Let $\phi\in K(z)$ be a rational function of degree $>1$. 
Then for every $M\in\mathrm{PGL}(2,K)$,
$M^{-1}(\xi_g)\in\MinResLoc_\phi$ if and only if 
$\widehat{(M\circ\phi\circ M^{-1})}$ is semistable.
\end{theorem}

We note that the totality of $M\in\PGL(2,K)$ for which
$M\circ\phi\circ M^{-1}$ has a semistable reduction is 
invariant under the left $\PGL(2,\cO_K)$-multiplication action on $\PGL(2,K)$.

\begin{definition}
 The {\em potential semistable reduction locus for $\phi$ in} 
$\PGL(2,\cO_K)\backslash\PGL(2,K)$ is defined by the locus
\begin{gather*}
 \bigl\{\PGL(2,\cO_K)M:M\circ\phi\circ M^{-1}\text{ has a semistable reduction}\bigr\}.
\end{gather*}
\end{definition}

Theorem \ref{th:MinResLoc-stable} is restated as
that {\em the type} II {\em locus $\MinResLoc_\phi\cap\sH^1_{\mathrm{II}}$ 
in $\MinResLoc_\phi$ is identified with
the potential semistable reduction locus for $\phi$ in $\PGL(2,\cO_K)\backslash\PGL(2,K)$} 
using the canonical embedding of $\PGL(2,\cO_K)\backslash\PGL(2,K)$ into
$\sH^1$ the image of which is $\sH^1_{\mathrm{II}}$
(see Fact \ref{th:transitive}). \yo{We use this identification only in
reformulating the identity \eqref{eq:stationaryminres} as Theorem \ref{thm:stationary}.}

\section{\yo{hyperbolic resultant} functions and \yo{resultant measure}s}
\label{sec:curvaturetreevertex}

In this section, let $\phi\in K(z)$ be a rational function of degree $d>1$.
We recall \yo{hyperbolic resultant} functions and \yo{resultant measure}s for $\phi$ developed in  \cite{Okuyama20} and then establish several auxiliary results on \yo{resultant measure}s. 
\subsection{The \yo{hyperbolic resultant} function}\label{sec:crucialfunc}
In  \cite{Okuyama20}, 
the second author introduced the {\em $\phi$-\yo{hyperbolic resultant} function}
\begin{gather*}
 \Crucial_\phi(\xi):=
\frac{\rho(\xi,\xi_g)}{2}+\frac{\rho(\xi,\phi(\xi)\wedge_{\xi_g}\xi)-\int_{\sP^1}\rho(\xi_g,\xi\wedge_{\xi_g}\cdot)\phi^*\delta_{\xi_g}}{d-1}
\quad\text{on }\sH^1,\label{eq:crucialdef}
\end{gather*}
which satisfies a \yo{nice} difference formula
\begin{gather}
 \Crucial_\phi(\xi)-\Crucial_\phi(\xi_0)=
\frac{\rho(\xi,\xi_0)}{2}+\frac{\rho(\xi,\phi(\xi)\wedge_{\xi_0}\xi)-\int_{\sP^1}\rho(\xi_0,\xi\wedge_{\xi_0}\cdot)(\phi^*\delta_{\xi_0})}{d-1}
\label{eq:difference}
\end{gather} 
for any $\xi,\xi_0\in\sH^1$. 
\yo{The following part by part computations are very useful.
\begin{fact}[{\cite[(4.1), (4.2)]{Okuyama20}}]\label{th:local}
For every rational function $\phi\in K(z)$ of degree $>1$, every $\xi\in\sH^1$,
and every $\widevec{\xi\xi''}\in T_{\xi}\sP^1$, 
diminishing $[\xi,\xi'']$
if necessary, we have
\begin{gather*}
[\xi,\xi'']\ni\xi'\mapsto\rho\bigl(\xi',\phi(\xi')\wedge_{\xi}\xi'\bigr)=
\begin{cases}
 0 &\text{if }\phi(\xi)\neq\xi\text{ and }\widevec{\xi\phi(\xi)}=\widevec{\xi\xi''},\\
\rho(\xi',\xi)&\text{if }\phi(\xi)\neq\xi\text{ and }\widevec{\xi\phi(\xi)}\neq\widevec{\xi\xi''},\\
0 &\text{if }\phi(\xi)=\xi\text{ and }\phi_*(\widevec{\xi\xi''})=\widevec{\xi\xi''},\\
\rho(\xi',\xi) &\text{if }\phi(\xi)=\xi\text{ and }\phi_*(\widevec{\xi\xi''})\neq\widevec{\xi\xi''}
\end{cases}
\quad\text{and}\\
[\xi,\xi'']\ni\xi'\mapsto\int_{\sP^1}\rho(\xi,\xi'\wedge_{\xi}\cdot)(\phi^*\delta_{\xi})=(\phi^*\delta_{\xi})\bigl(U(\widevec{\xi\xi''})\bigr)\cdot\rho(\xi',\xi).
\end{gather*}
\end{fact}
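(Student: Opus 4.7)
The plan is to treat the two formulas separately by locating $\phi(\xi')$ and the points of $\phi^{-1}(\xi)$ in the Berkovich tree relative to the geodesic $[\xi,\xi'']$. The key input is \eqref{eq:multiplicity}, which, by diminishing $[\xi,\xi'']$ if necessary, makes $\phi$ map $[\xi,\xi']$ homeomorphically onto $[\phi(\xi),\phi(\xi')]$ with $\rho(\phi(\xi),\phi(\xi'))=m\cdot\rho(\xi,\xi')$ for $m:=m_\phi(\widevec{\xi\xi''})$. Together with the weak continuity of $\phi$, this pins down the initial direction of $[\xi,\phi(\xi')]$ at $\xi$: if $\phi(\xi)\neq\xi$, then since $\phi(\xi')\to\phi(\xi)\in U(\widevec{\xi\phi(\xi)})$ and $U(\widevec{\xi\phi(\xi)})$ is open, $\phi(\xi')\in U(\widevec{\xi\phi(\xi)})$ for $\xi'$ near $\xi$; if $\phi(\xi)=\xi$, then $[\xi,\phi(\xi')]=\phi([\xi,\xi'])$ emanates in the image direction $\phi_*(\widevec{\xi\xi''})$.

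For the first formula, I would analyze the wedge $\phi(\xi')\wedge_\xi\xi'$ case by case. In cases $(2)$ and $(4)$ the direction of $[\xi,\phi(\xi')]$ at $\xi$ differs from $\widevec{\xi\xi''}=\widevec{\xi\xi'}$, so $[\xi,\phi(\xi')]$ and $[\xi,\xi']$ meet only at $\xi$; hence the wedge equals $\xi$ and $\rho(\xi',\phi(\xi')\wedge_\xi\xi')=\rho(\xi',\xi)$. In cases $(1)$ and $(3)$ both segments start at $\xi$ in direction $\widevec{\xi\xi''}$, and the goal is to show $\xi'\in[\xi,\phi(\xi')]$ for $\xi'$ close enough to $\xi$, giving wedge $\xi'$ and $\rho=0$. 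In case $(3)$, since $\rho(\xi,\phi(\xi'))=m\cdot\rho(\xi,\xi')\geq\rho(\xi,\xi')$, the shared initial portion of $[\xi,\phi(\xi')]$ and $[\xi,\xi'']$ (forced by the common initial direction together with the isometric nature of $\phi$ on the small segment $[\xi,\xi']$) contains $\xi'$ for $\xi'$ sufficiently close to $\xi$. In case $(1)$, setting $\beta:=\phi(\xi)\wedge_\xi\xi''$ (so $\beta\neq\xi$ by the hypothesis $\widevec{\xi\phi(\xi)}=\widevec{\xi\xi''}$), I would first verify $\xi'\in[\xi,\phi(\xi)]$ for $\xi'\in(\xi,\beta]$, then analyze $[\xi,\phi(\xi')]$ as a perturbation of $[\xi,\phi(\xi)]$ to conclude $\xi'\in[\xi,\phi(\xi')]$ for $\xi'$ close enough to $\xi$.

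For the second formula, expand the integral via the identity $\phi^*\delta_\xi=\sum_{\eta\in\phi^{-1}(\xi)}(\deg_\eta\phi)\delta_\eta$ into $\sum_\eta(\deg_\eta\phi)\,\rho(\xi,\xi'\wedge_\xi\eta)$. For each $\eta\in\phi^{-1}(\xi)$, either $\eta\notin U(\widevec{\xi\xi''})$, in which case $[\xi,\eta]$ and $[\xi,\xi'']$ depart from $\xi$ in different directions, so $\xi'\wedge_\xi\eta=\xi$ and the contribution vanishes; or $\eta\in U(\widevec{\xi\xi''})$, in which case, by further diminishing $[\xi,\xi'']$ to lie below each of the finitely many branch points $\eta\wedge_\xi\xi''$, one obtains $\xi'\in[\xi,\eta]$, hence $\xi'\wedge_\xi\eta=\xi'$ and the contribution is $(\deg_\eta\phi)\cdot\rho(\xi',\xi)$. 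Summing over $\eta$ yields exactly $(\phi^*\delta_\xi)(U(\widevec{\xi\xi''}))\cdot\rho(\xi',\xi)$.

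The main obstacle will be case $(1)$ of the first formula, where I must control the branching of $[\xi,\phi(\xi')]$ as $\xi'\to\xi$; I plan to split into sub-cases according to whether $\phi_*(\widevec{\xi\xi''})$ equals the backward direction $\widevec{\phi(\xi)\xi}$ or not, and in each sub-case use the estimate $\rho(\xi,\phi(\xi'))\geq\rho(\xi,\phi(\xi))-m\cdot\rho(\xi,\xi')$ together with the Berkovich tree structure at $\phi(\xi)$ to verify that $[\xi,\phi(\xi')]$ coincides with an initial sub-segment of $[\xi,\phi(\xi)]$ (possibly extended past $\phi(\xi)$) which, for $\xi'$ small enough, contains $\xi'$.
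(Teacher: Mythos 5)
The paper does not prove this statement: it is imported verbatim as a Fact from \cite[(4.1), (4.2)]{Okuyama20}, so there is no internal proof to compare against. Your blind verification is correct and is the natural direct argument. The reduction of both formulas to locating the initial direction of $[\xi,\phi(\xi')]$ (resp.\ of $[\xi,\eta]$ for $\eta\in\phi^{-1}(\xi)$) at $\xi$ is exactly what is needed, and you correctly identify the only delicate point, namely case $(1)$ where $\phi(\xi)\neq\xi$ and $\widevec{\xi\phi(\xi)}=\widevec{\xi\xi''}$: there the point $w:=\xi\wedge_{\phi(\xi)}\phi(\xi')$ satisfies $\rho(\phi(\xi),w)\le\rho(\phi(\xi),\phi(\xi'))=m\,\rho(\xi,\xi')$, hence $\rho(\xi,w)\ge\rho(\xi,\phi(\xi))-m\,\rho(\xi,\xi')>\rho(\xi,\xi')$ once $\rho(\xi,\xi')<\rho(\xi,\phi(\xi))/(m+1)$, and since $\xi'$ and $w$ both lie on $[\xi,\phi(\xi)]$ this forces $\xi'\in[\xi,w]\subset[\xi,\phi(\xi')]$, uniformly over your two sub-cases. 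Two minor points of hygiene: in case $(3)$ the map on the short segment is not isometric but a homeomorphism scaling distances by $m=m_\phi(\widevec{\xi\xi''})\ge 1$ (which is all you use, since you only need $\rho(\xi,\phi(\xi'))\ge\rho(\xi,\xi')$ together with the fact that $\phi(\xi')$ lies on the fixed segment $[\xi,\phi(\xi'_0)]$ whose initial portion agrees with $[\xi,\xi'']$); and in the second formula the term $\eta=\xi$ (when $\phi(\xi)=\xi$) contributes $0$ and is correctly excluded from $U(\widevec{\xi\xi''})$, so the count matches. The finiteness of $\phi^{-1}(\xi)$ justifies the simultaneous diminishing of $[\xi,\xi'']$. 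No gaps.
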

}
The function $\Crucial_\phi$ is a proper, convex,
and piecewise affine $\bR$-valued function on $(\sH^1,\rho)$ and moreover,
taking the value $+\infty$ everywhere on $\bP^1$, 
the (extended) $\Crucial_\phi$ 
is locally affine on $(\sP^1,\tilde{\rho})$ except for $\sH^1_{\mathrm{II}}$.
Here, as usual, 
the convexity of $\Crucial_\phi$ on $(\sH^1, \rho)$ means that
for every $\xi\in\sH^1$ and any distinct $\vec{v},\vec{w}\in T_\xi\sP^1$, 
\begin{gather}
(\rd_{\vec{v}}+\rd_{\vec{w}})\Crucial_{\phi}\ge 0 \label{eq:convex}
\end{gather}
(see \cite[Theorems 1 and 2]{Okuyama20}). 
The hyperbolic resultant function  $\Crucial_\phi$ provides an explicit,
global, and hyperbolic geometric
formula 
\begin{gather*}
 \ordRes_\phi(\xi)
=2d(d-1)\cdot\Crucial_\phi(\xi)-\log|\Res(\text{a minimal lift of }\phi)|
\quad\text{on }\sP^1\label{eq:ordcru}
\end{gather*}
of 
the \yo{resultant function} $\ordRes_\phi$, 
so in particular provides the coincidence
\begin{gather}
\MinResLoc_\phi=\text{(the minimum locus of }\Crucial_\phi).\label{eq:same}
\end{gather}

From the difference formula  \eqref{eq:difference}
\yo{and Fact \ref{th:local}}, 
we have
\begin{gather}
 (d-1)\cdot\rd_{\vec{v}}\Crucial_\phi\in\Bigl\{\frac{d+1-2m}{2}:m\in\{0,1,2,\ldots,(\deg\phi)+1\}\Bigr\}\label{eq:sloperange}
\end{gather}
for every $\xi\in\sH^1$ and every $\vec{v}\in T_\xi\sP^1$,
so $\rd_{\vec{v}}\Crucial_\phi$ never vanishes if $\deg\phi$ is even
(\cite[Theorem 2]{Okuyama20}). For every non-trivial finite subtree $\Gamma$ in $\sP^1$,
every $\xi\in\Gamma\cap\sH^1$, and every $\vec{v}\in T_{\xi}\Gamma$, 
the following slope formula 
\begin{gather}
\rd_{\vec{v}}\Crucial_\phi
=\frac{1}{2}-\bigl(\nu_{\phi,\Gamma}\bigr)(U(\vec{v}))\label{eq:slope}
\end{gather}
(\cite[Theorem 3]{Okuyama20}),
where the Radon measure $\nu_{\phi,\Gamma}$ of total mass $1$
is the $\Gamma$-\yo{resultant measure}
for $\phi$ defined in the next subsection,
is very useful \yo{in computing $\nu_{\phi,\Gamma}$ themselves and
their barycenters}. 

\subsection{The \yo{resultant measure} and its barycenter}\label{sec:slopeweight}

Pick a non-trivial finite subtree $\Gamma$ in $\sP^1$,
and recall from Subsection \ref{sec:subtree}
the Laplacian $\Delta_\Gamma$ and
the $\Gamma$-valency measure $\nu^{\val}_\Gamma$. 
The $\Gamma$-\yo{resultant measure} for $\phi$ is defined by 
\begin{gather*}
 \nu_{\phi,\Gamma}:=
\Delta_\Gamma\Crucial_\phi+\nu^{\val}_\Gamma
 \quad\text{on }\Gamma, \label{eq:curvature} 
\end{gather*}
which can be written as 
\begin{gather}
\nu_{\phi,\Gamma}
=\frac{\Delta_\Gamma\bigl(\xi\mapsto\rho(\xi, \phi(\xi)\wedge_{\xi_0}\xi)\bigr)+\bigl(r_{\sP^1,\Gamma}\bigr)_*(\phi^*\delta_{\xi_0}-\delta_{\xi_0})}{d-1}
\quad\text{on }\Gamma
\label{eq:crucialmeas}
\end{gather}
for every $\xi_0\in\sH^1$ (\cite[(1.13)]{Okuyama20}), 
is a 
(signed) Radon measure on $\Gamma$ 
having total mass $1$ and 
supported on a finite subset in 
\begin{gather*}
 (\Gamma\cap\sH^1_{\mathrm{II}})\cup\{\text{type I or IV points in }\Gamma\ \text{not fixed by }\phi\}\cup\{\text{type III end points of }\Gamma\},
\end{gather*}
and induces the weight function
\begin{gather*}
\sP^1\ni\xi\mapsto (d-1)\cdot\nu_{\phi,\Gamma}(\{\xi\})\in
\bN\cup\{0,-1\}
\end{gather*}
on $\sP^1$ (\cite[Theorems 2(ii) and 3(ii)]{Okuyama20}).
The $\Gamma$-{\em \yo{resultant} tree} $\sT_{\phi,\Gamma}$ for $\phi$ associated to $\Gamma$
is the subtree
\begin{gather*}
 \sT_{\phi,\Gamma}:=\Span\bigl(\supp(\nu_{\phi,\Gamma})\bigr)
\end{gather*}
in $\Gamma$, which is equipped with the {\em \yo{$\Gamma$-resultant} vertex set}
\begin{gather}\label{eq:V}
\sV_{\phi,\Gamma}:=\supp(\nu_{\phi,\Gamma})\cup
 \supp(\nu^{\val}_{\sT_{\phi,\Gamma}}).
 \end{gather}

\begin{remark}\label{th:crucialtree}
 In \cite{Rumely17}, 
 Rumely originally introduced the {\em crucial measure} $\nu_\phi$ for $\phi$ 
 which is nothing but
 the $\Gamma_\phi^{\FP}$-\yo{resultant measure} $\nu_{\phi,\Gamma_\phi^{\FP}}$
 associated to the \yo{(Fix-Preim)} tree
\begin{gather*}
  \Gamma_\phi^{\FP}:=\bigcap_{a\in\bP^1}
 \Span\bigl(\{\text{classical fixed points of }\phi\}\cup\phi^{-1}(a)\bigr)
\end{gather*} 
 \yo{for $\phi$} and is indeed $\ge 0$, i.e., a probability measure 
 on $\Gamma_\phi^{\FP}$.
 Rumely also introduced $\sT_\phi:=\sT_{\phi,\Gamma_\phi^{\FP}}$
and its vertex set $\sV_\phi:=\sV_{\phi,\Gamma_\phi^{\FP}}$ and
 named them as the {\em crucial tree/vertex set} for $\phi$, and established that
 $\MinResLoc_\phi$ is either a singleton in $\sV_{\phi}$ or a closed edge of 
 $\sT_{\phi}$ any end point
 in which is in $\sH^1$ (\cite[Theorem 1.1]{Rumely13}). 
\end{remark}
Let us denote by $\BC(\nu_{\phi,\Gamma})$
the barycenter of $(\iota_{\sT_{\phi,\Gamma},\Gamma})^*\nu_{\phi,\Gamma}$ on $\sT_{\phi,\Gamma}$, i.e.,
\begin{gather*}
\yo{\BC(\nu_{\phi,\Gamma}):=\BC_{\sT_{\phi,\Gamma}}((\iota_{\sT_{\phi,\Gamma},\Gamma})^*\nu_{\phi,\Gamma});}
\end{gather*}
thanks to the first two propositions in the next subsection, 
for any non-trivial finite subtree $\Gamma'$ containing 
$\sT_{\phi,\Gamma}$ (e.g.\ for $\Gamma'=\Gamma$), we have
\begin{gather}
 \BC(\nu_{\phi,\Gamma})=\BC_{\Gamma'}(\nu_{\phi,\Gamma}).\label{eq:bcwelldef}
\end{gather}

Rumely's analytic characterization of $\MinResLoc_\phi$ ({\cite[Theorem B]{Rumely17}}),
which is also mentioned in Section \ref{sec:intro}, is the following.

\begin{theorem}
For every $\phi\in K(z)$ of degree $>1$,
$\MinResLoc_\phi=\BC(\nu_{\phi,\Gamma_\phi^{\FP}})$.
\end{theorem}

\subsection{Auxiliary results}
Let $\phi\in K(z)$ be a rational function of degree $>1$. 
We conclude this section with several very useful consequences on 
the \yo{resultant measure}s for $\phi$
from the convexity \eqref{eq:convex} and the slope formula \eqref{eq:slope}. 
We begin with the following invariance property.
\begin{proposition}\label{th:measure-sum}
For any non-trivial subtrees $\Gamma, \Gamma'$ in $\sP^1$, 
where $\Gamma\subset\Gamma'$, 
and any $\xi\in\Gamma\cap\sH^1$ and any $\vec{v}\in T_\xi\Gamma$, we have
$\nu_{\phi,\Gamma}(U(\vec{v}))=\nu_{\phi,\Gamma'}(U(\vec{v}))$.
\end{proposition}

\begin{proof}
Using \eqref{eq:slope} twice, we have
\begin{gather*}
\nu_{\phi,\Gamma'}(U(\vec{v}))
=\frac{1}{2}-\rd_{\vec{v}}\Crucial_\phi
=\nu_{\phi,\Gamma}(U(\vec{v})),
\end{gather*}
which completes the proof.
\end{proof}

The statement (\ref{item:closedopen}) in the following monotonicity property
plays a crucial role in our study of  $\BC(\nu_{\phi,\Gamma})$ 
for 
$\phi=P^j$ and the $n$-th dynamical tree $\Gamma=\Gamma_n$ for
any polynomial $P\in K[z]$.

\begin{proposition}[monotonicity properties for Berkovich balls]\label{th:monotonicity}
For every non-trivial finite subtree $\Gamma$ in $\sP^1$,
we have following. 
\begin{enumerate}[{\em (i)}]
\item  \label{item:closedopen}
For any $\xi\in\Gamma\cap\sH^1$ and 
any distinct $\vec{v},\vec{w}\in T_\xi\Gamma$, 
$\nu_{\phi,\Gamma}(U(\vec{v}))
\le
\nu_{\phi,\Gamma}(\sP^1\setminus 
U(\vec{w}))$.
\item \label{item:relatively}
For any distinct $\xi,\xi'\in\Gamma\cap\sH^1$ and 
any $\widevec{v'}\in T_{\xi'}\Gamma$,
if $U(\widevec{v'})\subset U(\widevec{\xi\xi'})$, then ($\xi'\not\in\partial\Gamma$ and)
$\nu_{\phi,\Gamma}(U(\widevec{v'}))
\le
\nu_{\phi,\Gamma}(U(\widevec{\xi\xi'}))$.
\end{enumerate}
\end{proposition}

\begin{proof}
(\ref{item:closedopen}) Using \eqref{eq:slope} twice, we have
\begin{multline*} 
\nu_{\phi,\Gamma}(\sP^1\setminus 
U(\vec{w}))-\nu_{\phi,\Gamma}(U(\vec{v}))=\bigl(1-\nu_{\phi,\Gamma}(U(\vec{w}))\bigr)
-\nu_{\phi,\Gamma}(U(\vec{v}))\\
=1-\Bigl(\frac{1}{2}-\rd_{\vec{w}}\Crucial_{\phi}\Bigr)
-\Bigl(\frac{1}{2}-\rd_{\vec{v}}\Crucial_{\phi}\Bigr)
=(\rd_{\vec{w}}+\rd_{\vec{v}})\Crucial_{\phi}\ge 0,
\end{multline*}
where the final inequality is by \eqref{eq:convex}.

(\ref{item:relatively}) Without loss of generality, we assume that 
the open interval $(\xi,\xi')$ is contained in
an edge of $\sT_{\phi,\Gamma}$, so that $\nu_{\phi,\Gamma}((\xi,\xi'))=0$.  
Noting that $\widevec{v'}\neq\widevec{\xi'\xi}$ or equivalently
$U(\widevec{\xi'\xi})\cap U(\widevec{v'})=\emptyset$ (under the assumption),
we have
$(U(\widevec{\xi\xi'})\setminus U(\widevec{v'}))\cap\Gamma
=(\xi,\xi')\sqcup((\sP^1\setminus(U(\widevec{\xi'\xi})\sqcup U(\widevec{v'})))\cap\Gamma)$,
and in turn
\begin{multline*}
\nu_{\phi,\Gamma}\bigl(U(\widevec{\xi\xi'})\bigr)
-\nu_{\phi,\Gamma}(U(\widevec{v'}))
=0+\bigl(1-\nu_{\phi,\Gamma}\bigl(U(\widevec{\xi'\xi})\sqcup U(\widevec{v'})\bigr)\bigr)\\
=1-\bigl(\nu_{\phi,\Gamma}\bigl(U(\widevec{\xi'\xi})\bigr)
+\nu_{\phi,\Gamma}(U(\widevec{v'}))\bigr)
=\nu_{\phi,\Gamma}\bigl(\sP^1\setminus U(\widevec{\xi'\xi})\bigr)
-\nu_{\phi,\Gamma}(U(\widevec{v'}))\ge 0,
\end{multline*}
where the final inequality is by the first statement (\ref{item:closedopen}).
\end{proof}

The minimum locus of $\Crucial_\phi$ on 
any non-trivial finite subtree $\Gamma$ in $\sP^1$
is a subtree in $\Gamma$ and is contained in $\sH^1$
by the properties of $\Crucial_\phi$ mentioned above.

\begin{lemma}\label{th:bcchar}
Let $\Gamma$ be a non-trivial finite subtree 
in $\sP^1$. Then
\begin{gather}
 \BC(\nu_{\phi,\Gamma})=(\text{the minimum locus of }\Crucial_\phi|\Gamma).\label{eq:samefinite}
\end{gather}
Moreover, for any distinct $\alpha,\beta\in\Gamma$,
the following statements are equivalent; 
\begin{enumerate}[{\em (i)}]
\item \label{item:whole}
$[\alpha,\beta]\subset \BC(\nu_{\phi,\Gamma})$.
\item \label{item:goback}
$\nu_{\phi,\Gamma}(U(\widevec{\alpha\beta}))
=\nu_{\phi,\Gamma}(U(\widevec{\beta\alpha}))
=1/2$.
\item $\nu_{\phi,\Gamma}(U(\vec{v}))=1/2$ for every $\xi\in[\alpha,\beta]$ and every $\vec{v}\in T_\xi([\alpha,\beta])$.
      \label{item:everypoint}
\item $\Crucial_\phi\equiv\min_{\Gamma}\Crucial_\phi$
      on $[\alpha,\beta]$.
      \label{item:minimumtree}
\item $\Crucial_\phi\equiv\min_{\sH^1}\Crucial_\phi$ on $[\alpha,\beta]$.
      \label{item:minimum}
\end{enumerate}
\end{lemma}

\begin{proof}
The equality \eqref{eq:samefinite} follows from 
the convexity \eqref{eq:convex} and the slope formula \eqref{eq:slope},
and together with
the piecewise affineness of $\Crucial_\phi$
yields the equivalence
(\ref{item:whole})$\Leftrightarrow$(\ref{item:minimumtree})
for any distinct $\alpha,\beta\in\Gamma$.
On the other hand,
the implication (\ref{item:minimum})$\Rightarrow$(\ref{item:minimumtree})
is clear, (\ref{item:minimumtree})$\Rightarrow$(\ref{item:everypoint})
is by \eqref{eq:slope}, and (\ref{item:everypoint})$\Rightarrow$(\ref{item:goback}) is clear. The implication
(\ref{item:goback})$\Rightarrow$(\ref{item:minimumtree})
is by \eqref{eq:convex} and \eqref{eq:slope},
and (\ref{item:minimumtree})$\Rightarrow$(\ref{item:minimum})
is by \eqref{eq:convex}.
\end{proof}

The following is an application of Lemma \ref{th:bcchar}
and is a partial generalization of \cite[Theorem 3(iii)]{Okuyama20}, where only the case that $\nu_{\phi,\Gamma}\ge 0$ was treated.

\begin{proposition}\label{lem:baryhalf}
\begin{enumerate}[{\em (A)}]
 \item Let $\Gamma$ be a non-trivial finite subtree in $\sP^1$.
 Then the barycenter $\BC(\nu_{\phi,\Gamma})$ is 
 an interval in $\sT_{\phi,\Gamma}$ 
 having end points $\alpha_1,\alpha_2$ in $\sV_{\phi,\Gamma}$
 (possibly $\alpha_1=\alpha_2$), and we have
$\supp(\nu_{\phi,\Gamma})\cap\BC(\nu_{\phi,\Gamma})
 \subset\{\alpha_1,\alpha_2\}\cup\bigl\{\text{branch points of }
 \sT_{\phi,\Gamma}\bigr\}$.
       \label{head:barymin}
 \item Let $\Gamma,\Gamma'$ 
 be any non-trivial finite subtrees in $\sP^1$
 such that $\Gamma\subset\Gamma'$.
       \label{head:singleton}
 \begin{enumerate}[{\em (i)}]  
 \item \label{item:segment}
 If $\BC(\nu_{\phi,\Gamma})$ is non-trivial, then
 $\BC(\nu_{\phi,\Gamma})\subset\BC(\nu_{\phi,\Gamma'})$, 
 and if in addition $\Gamma'$ is an end extension of $\Gamma$ and
 $\BC(\nu_{\phi,\Gamma})\neq\BC(\nu_{\phi,\Gamma'})$,
 then any component of 
 $\BC(\nu_{\phi,\Gamma'})\setminus\BC(\nu_{\phi,\Gamma})$ is written as 
 $[\xi',\xi)\subset\Gamma'\setminus\Gamma$
 for some $\xi'\in\partial\BC(\nu_{\phi,\Gamma'})$
 and some $\xi\in\partial\Gamma$.
 \item \label{item:singleton}
 If $\BC(\nu_{\phi,\Gamma})=\{\xi\}$ and 
 $\BC(\nu_{\phi,\Gamma'})=\{\xi'\}$, $\xi\neq\xi'$, then
 we have $[\xi',\xi)\subset\Gamma'\setminus\Gamma$, and if in addition $\Gamma'$ is an end extension of $\Gamma$, then we also have $\xi\in\partial\Gamma$.
 \end{enumerate} 
\end{enumerate}
\end{proposition}

\begin{proof}
(\ref{head:barymin}) 
By \eqref{eq:samefinite},
$\BC(\nu_{\phi,\Gamma})$ is a subtree in $\sT_{\phi,\Gamma}(=\Span(\supp(\nu_{\phi,\Gamma})))$.
Moreover, $\BC(\nu_{\phi,\Gamma})$ is in $\MinResLoc_\phi$, 
which is an interval (see Remark \ref{th:crucialtree}),
by \eqref{eq:same} and the equivalence (\ref{item:whole})$\Leftrightarrow$(\ref{item:minimum}) in Lemma \ref{th:bcchar}. Hence $\BC(\nu_{\phi,\Gamma})$ is also an interval.

Let us see that $\partial\BC(\nu_{\phi,\Gamma})\subset\sV_{\phi,\Gamma}(=\supp(\nu_{\phi,\Gamma})\cup
\supp(\nu^{\val}_{\sT_{\phi,\Gamma}}))$; indeed, for any 
$\xi\in\partial\BC(\nu_{\phi,\Gamma})\setminus\{\text{branch points of }
\sT_{\phi,\Gamma}\}$,
$T_\xi(\sT_{\phi,\Gamma})$ consists of
distinct $\widevec{v_1},\widevec{v_2}$. Then 
noting both $\min_{i\in\{1,2\}}\rd_{\widevec{v_i}}\Crucial_\phi\ge 0$ 
(by the equality \eqref{eq:samefinite}) 
and $\max_{i\in\{1,2\}}\rd_{\widevec{v_i}}\Crucial_\phi>0$ 
(also 
by the piecewise affineness of $\Crucial_\phi$ on $(\sH^1,\rho)$)
and using the slope formula \eqref{eq:slope}, we have
$\nu_{\phi,\Gamma}(\{\xi\})
=\nu_{\phi,\Gamma}(\sT_{\phi,\Gamma})-\nu_{\phi,\Gamma}(U(\widevec{v_1}))-\nu_{\phi,\Gamma}(U(\widevec{v_2}))>1-(1/2)-(1/2)=0$, 
so that $\xi\in\supp(\nu_{\phi,\Gamma})$. 

Similarly, if we have
$\BC(\nu_{\phi,\Gamma})=[\alpha_1,\alpha_2]$,
then
for every $\xi\in(\alpha_1,\alpha_2)\setminus\{\text{branch points of }
\sT_{\phi,\Gamma}\}$, we have
$\nu_{\phi,\Gamma}(\{\xi\})=\nu_{\phi,\Gamma}(\sT_{\phi,\Gamma})-(1/2)-(1/2)=0$ 
using the equivalence \eqref{item:whole}$\Leftrightarrow$(\ref{item:everypoint}) in Lemma \ref{th:bcchar},
so that $\xi\in\BC(\nu_{\phi,\Gamma})\setminus\supp(\nu_{\phi,\Gamma})$.

(\ref{head:singleton}) If $\BC(\nu_{\phi,\Gamma})$ is non-trivial, 
then $\BC(\nu_{\phi,\Gamma})\subset\BC(\nu_{\phi,\Gamma'})$
by the equivalence (\ref{item:whole})$\Leftrightarrow$(\ref{item:minimum}) 
in Lemma \ref{th:bcchar}. If in addition 
$\Gamma'$ is an end extension of $\Gamma$
and $\BC(\nu_{\phi,\Gamma})\subset\Gamma\setminus\partial\Gamma$, then we even have
$\BC(\nu_{\phi,\Gamma})=\BC(\nu_{\phi,\Gamma'})$,
which concludes the remaining assertion in (\ref{item:segment}).

Next, suppose that $\BC(\nu_{\phi,\Gamma})$ and 
$\BC(\nu_{\phi,\Gamma'})$ are distinct 
singletons $\{\xi\},\{\xi'\}$, respectively.
If $[\xi',\xi)\cap\Gamma\neq\emptyset$, then
by the equality \eqref{eq:samefinite} and the convexity \eqref{eq:convex},
we must have $\min_{\Gamma'}\Crucial_\phi>\min_{\Gamma}\Crucial_\phi$,
which contradicts $\Gamma\subset\Gamma'$. 
If in addition $\Gamma'$ is an end extension of $\Gamma$ and
$\xi\not\in\partial\Gamma$, then $[\xi',\xi)\cap\Gamma\neq\emptyset$, which is 
impossible as seen above. Hence (\ref{item:singleton}) also holds.
\end{proof}

A sharper version (Proposition \ref{coro:measure1/2strong} below) of the following will be established below when $\phi=P^j$ and $\Gamma$ is the $n$-th dynamical tree $\Gamma_n$ for
any polynomial $P\in K[z]$.

\begin{proposition}[a general mass upper bound $1/2$ for Berkovich closed balls]\label{coro:measure1/2}
For every non-trivial finite subtree $\Gamma$ in $\sP^1$,
every $\xi\in\Gamma$, and
every $\vec{v}\in T_{\xi}\Gamma$ but at most one 
tangent direction, 
we have $\nu_{\phi,\Gamma}(U(\vec{v}))\le 1/2$.
\end{proposition}

\begin{proof}
Suppose to the contrary that 
$\min_{\ell\in\{1,2\}}(\nu_{\phi,\Gamma})(U(\widevec{v_\ell}))>1/2$
for some distinct $\widevec{v_1},\widevec{v_2}\in T_{\xi}\Gamma$. 
Then using \eqref{eq:slope} twice, we must have
\begin{gather*}
(\rd_{\widevec{v_1}}+\rd_{\widevec{v_2}})\Crucial_\phi
=\sum_{\ell\in\{1,2\}}\Bigl(\frac{1}{2}-\nu_{\phi,\Gamma}(U(\widevec{v_{\ell}}))\Bigr)
=1-\sum_{\ell\in\{1,2\}}\nu_{\phi,\Gamma}(U(\widevec{v_{\ell}}))
<1-\Bigl(\frac{1}{2}+\frac{1}{2}\Bigr)=0, 
\end{gather*}
which contradicts \eqref{eq:convex}.
\end{proof}

\section{Trucco's trees for a polynomial}\label{sec:tree}
Let $P\in K[z]$ be a polynomial of degree $d>1$.
The Berkovich (immediate) superattracting basin for $P$ 
associated to the superattracting fixed point $\infty$ is
\begin{gather*}
 \sfO_P(\infty):=\bigl\{\xi\in\sP^1:\lim_{n\to\infty}P^n(\xi)=\infty\bigr\},
\end{gather*}
which is a proper subdomain in $\sP^1$ 
containing $\infty$
and completely invariant 
in that $P^{-1}(\sfO_P(\infty))=\sfO_P(\infty)$. 
The Berkovich Julia set $\sJ_P$ coincides with 
the {\em topological} boundary $\partial\sfO_P(\infty)$
of $\sfO_P(\infty)$ in $\sP^1$.
By Fact \ref{th:polynomial}(\ref{item:ordpres}),
we have
\begin{gather}
P(\xi)\in U(\widevec{\xi\infty})\setminus\{\infty\}\quad
\text{for every }\xi\in\sfO_P(\infty)\setminus\{\infty\}.
\label{eq:observation} 
\end{gather}

\subsection{The base point for a polynomial}\label{sec:base}
By Rivera-Letelier \cite[Proposition 6.7]{Rivera00}, 
there is a unique minimal $K$-closed disk $B_P$
containing the classical filled-in Julia set 
$\bP^1\setminus\sfO_P(\infty)$ of $P$, which has
$\diam_{|\cdot|}B_P\in|K^\times|$.
The point $\xi_P\in\sH^1_{\mathrm{II}}$ represented by $B_P$ 
(see Subsection \ref{sec:berk})
is called the {\em base point for $P$}, and is the unique minimal 
point $\xi$ in $(\sP^1,\prec)$ such that $\sJ_P\subset\{\cdot\prec\xi\}$ or equivalently that
$\Span(\sJ_P)\subset\{\cdot\prec\xi\}$.

\begin{lemma}\label{th:basetoinfty}
We have
\begin{enumerate}[{\em (i)}]
 \item \label{item:diverge} 
       $P^n(\xi)\prec P^{n+1}(\xi)$
       for every $\xi\in[\xi_P,\infty]$
       and every $n\ge 0$,
 \item \label{item:totram}
       $\deg_\xi(P)\equiv d$ 
       for every $\xi\in[\xi_P,\infty]$,
 \item \label{item:totramif}
       $P^{-1}([P(\xi),\infty])=[\xi,\infty]$ 
       for every $\xi\in[\xi_P,\infty]$, and
 \item \label{item:directional}
       $m_P(\widevec{\xi_P\infty})\equiv d$, and 
       $m_P(\widevec{\xi\xi_P})=m_P(\widevec{\xi\infty})\equiv d$ for every $\xi\in(\xi_P,\infty)$.
\end{enumerate}
\end{lemma}

\begin{proof}
For every $\xi\in[\xi_P,\infty]$,
$(\emptyset\neq)\sJ_P\subset\{\cdot\prec\xi_P\}\subset\{\cdot\prec\xi\}$,
and then $\sJ_P=P(\sJ_P)\subset\{\cdot\prec P(\xi)\}$ 
using Fact \ref{th:polynomial}(\ref{head:direcsurp}),
so either $\xi\prec P(\xi)$ or $P(\xi)\prec\xi$.
When $P(\xi)\prec\xi$, we have 
$P^\ell(\xi)\prec\xi$ for any $\ell\ge 0$
by Fact \ref{th:polynomial}(\ref{item:ordpres}), so
$\xi=\xi_P\in\sJ_P$ and in turn $P(\xi)=\xi$.
Hence the item (\ref{item:diverge}) is the case for $n=0$, 
and then for any $n\ge 0$ by Fact \ref{th:polynomial}(\ref{item:ordpres}) again.
For every $\xi\in[\xi_P,\infty]$, from the above argument,
we also compute 
$\deg_\xi(P)=(P^*\delta_{\xi'})(\{\cdot\prec\xi\})
=(P^*\delta_{\xi'})(\sJ_P)=d$
for any $\xi'\in\sJ_P=P^{-1}(\sJ_P)\subset\{\cdot\prec\xi\}\subset\{\cdot\prec P(\xi)\}$
by Fact \ref{th:polynomial}(\ref{head:direcsurp}), 
so the item (\ref{item:totram}) is the case.
The item (\ref{item:totramif}) is by
(\ref{item:diverge}) and (\ref{item:totram}), 
and the item (\ref{item:directional}) is
by (\ref{item:totram}) and Fact \ref{th:polynomial}(\ref{head:locdegdirec})(\ref{item:increasing}).
\end{proof}

For every integer $n\ge 0$, we set
\begin{gather*}
 \sL_n:=P^{-n}(\{\xi_P\})\subset\sH^1_{\mathrm{II}}\cap(\Omega_P(\infty)\sqcup\sJ_P)
\end{gather*}
using the polynomial $P$. 
The simpleness of $P$ is equivalent to any of 
the following statements;
\begin{itemize}
 \item $\xi_P\in\sJ_P$,
 \item $P(\xi_P)=\xi_P$, and 
 \item $P^{-1}(\xi_P)=\{\xi_P\}$,
\end{itemize}
and then $\sL_n\equiv\sL_0$
(see also Subsection \ref{sec:simpleness}).

\subsection{The $n$-th and $\infty$-th level Trucco's trees \yo{in $\sP^1$}}\label{sec:Trucco}
For every integer $n\ge 0$,
we define the {\em $n$-th level Trucco's tree} as
\begin{gather*}
 \Gamma_n:=P^{-n}([\xi_P,\infty])
\end{gather*}
using the polynomial $P$, 
which is a non-trivial finite subtree 
in $\sfO_P(\infty)\sqcup\sJ_P$ having
\begin{gather*}
 \partial\Gamma_n=\sL_n\sqcup\{\infty\}, 
\end{gather*}
and is equipped with the vertex set 
\begin{gather*}
 V(\Gamma_n):=\partial\Gamma_n\sqcup\{\text{branch points of }\Gamma_n\}. 
\end{gather*}
We also define the {\em $\infty$-th level Trucco's tree} as
a non-trivial subtree in $\sfO_P(\infty)\sqcup\sJ_P$ 
\begin{gather*}
 \Gamma_\infty:=\Span(\sJ_P\sqcup\{\infty\})
\end{gather*}  
using $P$, 
where $\xi_P\in\Span(\sJ_P)\subset\{\cdot\prec\xi_P\}$ by
the definition of $B_P$ and which has
\begin{gather*}
 \partial\Gamma_\infty=\sJ_P\sqcup\{\infty\}
\end{gather*}
and is equipped with the vertex set\footnote{Those vertex sets $V(\Gamma_n)$ and $V(\Gamma_\infty)$ differ from the ones
(``geometric sequences'' in \cite{Trucco14})
 used by Trucco.} 
\begin{gather*}
V(\Gamma_\infty):=\partial\Gamma_\infty\sqcup\{\text{branch points of }\Gamma_\infty\}
\end{gather*}
(see Lemma \ref{th:inparticular} below for more details).

If $P$ is simple or equivalently $P^{-1}(\xi_P)=\{\xi_P\}=\sJ_P$, 
then by Lemma \ref{th:basetoinfty}, for every $n\ge 0$, 
\begin{gather}  
\sL_n\equiv\{\xi_P\}=\sJ_P,\quad
\Gamma_n\equiv\Gamma_0=\Gamma_\infty=P^{-1}(\Gamma_\infty),\quad\text{and}\quad V(\Gamma_n)\equiv\{\xi_P,\infty\}\equiv\partial\Gamma_n.\label{eq:simpletree} 
\end{gather}

The following is shown by using
Fact \ref{th:polynomial}, Lemma \ref{th:basetoinfty}, and 
\eqref{eq:totaldirect},
also recalling 
$P^{-1}(\sJ_P)=\sJ_P$,
$P^{-1}(\sfO_P(\infty))=\sfO_P(\infty)$,
the connectedness of $\sfO_P(\infty)$,
and the (defining) maximality property of $\xi_P$.
For more details, we refer to \cite[Sections 3 and 4]{Trucco14}.

\begin{fact}\label{th:inparticular}
If $P$ is non-simple, i.e., $P^{-1}(\xi_P)\neq\{\xi_P\}\subset\sfO_P(\infty)$,
then the following hold.
\begin{enumerate}[(i)]
\item \label{head:basehull}
 For every $n\ge 1$, 
\begin{enumerate}[(a)]
 \item \label{item:ends}
       $\sL_n\subset\sfO_P(\infty)\cap\bigcup_{\xi'\in\sL_{n-1}}\{\cdot\precneqq\xi'\}$,
       $\#((\xi,\infty]\cap\sL_{n-1})=1$ for any $\xi\in\sL_n$, and $\{\cdot\precneqq\xi'\}\cap\sL_n\neq\emptyset$ for any $\xi'\in\sL_{n-1}$,
       so in particular
       $\sL_0=\{\xi_P\}\subset\Span(\sL_{n-1})\subset\Span(\sL_n)\subset\{\cdot\prec\xi_P\}$ for any $n\ge 1$;
 \item \label{item:endext}
       $P^{-1}(\Gamma_{n-1})=\Gamma_n=\Span(\sL_n\sqcup\{\infty\})\subset\sfO_P(\infty)$,
       $P^{-1}(\partial\Gamma_{n-1})=\partial\Gamma_n
       =\sL_n\sqcup\{\infty\}$,
       $P^{-1}(V(\Gamma_{n-1})\setminus\partial\Gamma_{n-1})\subset
       V(\Gamma_n)\setminus\partial\Gamma_n$, 
       and $\Gamma_n$ is an end extension of $\Gamma_{n-1}$
       and moreover,
       $\Gamma_{n-1}\subset\Gamma_n\setminus\sL_n$ and
       $r_{\Gamma_n,\Gamma_{n-1}}(\sL_n)=\sL_{n-1}$;      
 \item \label{item:basebranch}
       $\sL_{n-1}\subset V(\Gamma_n)\setminus\partial\Gamma_n$;
 \item  \label{item:locdegedge}
	$P$ maps every edge of $\Gamma_n$ 
	into that of $\Gamma_{n-1}$ 
	homeomorphically, preserving the ordering $\prec$, and
	without strictly decreasing the (generalized) metric 
	$\tilde{\rho}$.
\end{enumerate}
 \item
       \label{eq:infty}
\begin{enumerate}
  \item \label{item:exhaust}
	$P^{-1}(\Gamma_\infty)=\Gamma_\infty$,
       $P^{-1}(\partial\Gamma_\infty)=\partial\Gamma_\infty=\sJ_P\sqcup\{\infty\}$,
       and 
       $P^{-1}(V(\Gamma_\infty)\setminus\partial\Gamma_\infty)\subset
       V(\Gamma_\infty)\setminus\partial\Gamma_\infty$,
and for every $n\ge 0$, $\Gamma_\infty$ is an end extension of $\Gamma_n$ and moreover, 
       $r_{\Gamma_\infty,\Gamma_n}(\sJ_P)=\sL_n$,
$\bigcup_{n\ge 0}\Gamma_n=\Gamma_\infty\setminus\partial\Gamma_\infty$, and $\bigsqcup_{n\ge 0}\sL_n\subset V(\Gamma_\infty)\setminus\partial\Gamma_\infty$.
 \item \label{item:nonmaximal}
For every $\xi\in\{\cdot\prec\xi_P\}\cap\Gamma_\infty$
and every $\vec{v}\in T_\xi\Gamma_\infty\setminus\{\widevec{\xi\infty}\}$, we have $m_{\vec{v}}(P)\le d-1$, and
for every $\xi\in\{\cdot\precneqq\xi_P\}\cap\Gamma_\infty$,
we have $\deg_\xi(P)\le d-1$.
\end{enumerate}
\end{enumerate}
\end{fact}

\subsection{Vertex sets of Trucco's trees}
Suppose that $P$ is tame in this subsection.
Then $\#(\crit_P)=d-1$ (see Subsection \ref{sec:simpleness});
in this subsection, each $c\in\crit_P$ is counted taking into account
its multiplicity $\deg_c(P)-1$ for $P$.

\begin{fact}[a Riemann-Hurwitz-type formula {\cite[Section 2.6]{Trucco14}}]\label{th:RH}
For every (non-trivial) classical closed/open ball $B$ in $K$,
we have
\begin{gather*}
 \#(\crit_P\cap B)=\deg\bigl(P:B\to P(B)\bigr)-1,
\end{gather*}
where
\begin{multline*}
 \deg\bigl(P:B\to P(B)\bigr)\\
\begin{cases}
:=\deg_\xi(P) & 
 \text{when }B=K\cap\{\cdot\prec\xi\}\text{ for some }\xi\in\sH^1_{\mathrm{II}}\cup\sH^1_{\mathrm{III}},\\
= m_P(\vec{v}) & 
   \text{when }B=K\cap U(\vec{v})\text{ for some }\xi\in\sH^1_{\mathrm{II}}\cup\sH^1_{\mathrm{III}}\text{ and some }\vec{v}\in(T_\xi\sP^1)\setminus\{\widevec{\xi\infty}\}
  \end{cases} 
\end{multline*}
(see also Fact \ref{th:polynomial}(\ref{head:direcsurp})).
\end{fact}

We also set 
\begin{gather}
 \sC_P:=r_{\sP^1,\Gamma_\infty}
\bigl(\crit_P\cap(\sfO_P(\infty)\setminus\{\infty\})\bigr),\label{eq:critretract}
\end{gather}
which is 
in $(\Gamma_\infty\setminus\partial\Gamma_\infty)\cap\sH^1_{\mathrm{II}}$;
under the tameness assumption,
\begin{gather}
 \begin{cases}
 \text{if }P\text{ is simple, then }\sC_P=\emptyset\text{ (see \cite[Corollary 2.11]{Kiwi06})};\\
 \text{if }P\text{ is 
 non-simple, then }\xi_P\in\sC_P\text{ (see \cite[Proposition 4.3]{Trucco14})}. 
\end{cases}\label{eq:tamebranchempty}
\end{gather}

\begin{lemma}\label{lem:tree-vertices}
$\sC_P\subset V(\Gamma_\infty)\setminus\partial\Gamma_\infty$.
\end{lemma}

\begin{proof}
Pick $\xi=r_{\sP^1,\Gamma_\infty}(c)\in\sC_P$ for some 
$c\in\crit_P\cap(\sfO_P(\infty)\setminus\{\infty\})$, 
so
$\xi\in\Gamma_\infty\setminus\partial\Gamma_\infty$.
Then $P(\xi)\in\Gamma_\infty\setminus\partial\Gamma_\infty$ 
(see Lemma \ref{th:inparticular}(\ref{item:exhaust})),
so there is $\vec{w}\in T_{P(\xi)}\Gamma_\infty\setminus\{\widevec{P(\xi)\infty}\}$, and we have
\begin{gather*}
 \bigl\{\vec{v}\in T_\xi\sP^1:P_*\vec{v}=\vec{w}\bigr\}
\subset T_\xi\Gamma_\infty\setminus\bigl\{\widevec{\xi\infty}\bigr\} 
\end{gather*}
by Facts \ref{th:polynomial}(\ref{head:treeback})
and \ref{th:inparticular}(\ref{item:exhaust}).
Let us see that $\#\{\vec{v}\in T_\xi\sP^1:P_*\vec{v}=\vec{w}\}>1$,
which will complete the proof; for, otherwise,
there is a unique 
$\vec{v}\in T_\xi\sP^1\setminus\{\widevec{\xi\infty}\}$
such that $P_*\vec{v}=\vec{w}$, and then 
by \eqref{eq:totaldirect}, we have 
$m_P(\vec{v})=\deg_{\xi}(P)$.
Moreover, we have not only $\widevec{\xi c}\in(T_\xi\sP^1)\setminus(T_\xi\Gamma_\infty)\subset(T_\xi\sP^1)\setminus\{\vec{v},\widevec{\xi\infty}\}$ but, using Fact \ref{th:polynomial}(\ref{item:increasing}),
also $m_P(\widevec{\xi c})\ge\deg_c(P)>1$. Hence we must have
\begin{multline*}
\deg_\xi(P)-1=\#(\crit_P\cap\{\cdot\prec\xi\})
\ge\#\bigl(\crit_P\cap(U(\vec{v})\cup U(\widevec{\xi c}))\bigr)\\
=\bigl(m_P(\vec{v})-1\bigr)+\bigl(m_P(\widevec{\xi c})-1\bigr)>(\deg_\xi(P)-1)+(1-1)=\deg_\xi(P)-1
\end{multline*}
using a Riemann-Hurwitz-type formula (Fact \ref{th:RH}) in $3$ times.
This is a contradiction.
\end{proof}

The following proposition
is clear by \eqref{eq:simpletree} and Lemma \ref{th:basetoinfty}
when $P$ is simple. Under the tameness assumption, 
if $P$ is non-simple, then
the equality \eqref{eq:tamevertex} below
is a precision of 
Lemma \ref{lem:tree-vertices},
and together with \cite[Proposition 4.10]{Trucco14}
yields \eqref{eq:locdegedge} below.

\begin{proposition}\label{th:branchar}
For every $n\ge 0$,
\begin{gather}
 V(\Gamma_n)
=\Bigl(\Gamma_n\cap\bigcup_{i=0}^nP^{-i}(\sC_P)\Bigr)\cup\{\xi_P,\infty\},\label{eq:tamevertex}
\end{gather}
and for every $j\ge 1$, every $n\ge 0$,  
and every open edge $(\xi_1,\xi_2)$ of $\Gamma_n$,
$\xi_1\precneqq\xi_2$, we have
\begin{gather}
 \deg_{\xi}(P^j)\equiv\deg_{\xi_1}(P^j)
 =m_{P^j}\bigl(\widevec{\xi_1\xi_2}\bigr)\equiv m_{P^j}\bigl(\widevec{\xi\xi_1}\bigr)
\quad\text{for every }\xi\in(\xi_1,\xi_2).\label{eq:locdegedge}
\end{gather}
\end{proposition}

\begin{proof}[Proof of \eqref{eq:tamevertex} when $P$ is non-simple and tame]
For $n=0$, \eqref{eq:tamevertex}
follows from Lemma \ref{lem:tree-vertices}. 

Pick $n>0$. The inclusion ``$\supset$'' 
in \eqref{eq:tamevertex} follows from 
Lemma \ref{lem:tree-vertices} and
Fact \ref{th:inparticular}(\ref{item:endext}).
Let us see the reverse inclusion ``$\subset$''.
Pick $\xi\in V(\Gamma_n)\setminus\{\infty\}$.
Setting $\ell:=\min\{j\ge 0:P^{j+1}(\xi)
\in\Gamma_{\infty}\setminus V(\Gamma_{\infty})\}
\in\{0,1,\ldots,n\}$ (by Fact \ref{th:inparticular}(\ref{item:exhaust})), we have 
$P^\ell(\xi)\in V(\Gamma_{\infty})\setminus\partial \Gamma_{\infty}$, i.e., 
\begin{gather*}
 v_{\Gamma_{\infty}}(P^\ell(\xi))-1
=\#(T_{P^\ell(\xi)}\Gamma_{\infty}\setminus\bigl\{\widevec{P^\ell(\xi)\infty}\bigr\})>1 
\end{gather*}
as well as
$T_{P^{\ell+1}(\xi)}\Gamma_{\infty}
=\bigl\{\widevec{P^{\ell+1}(\xi)\infty},\vec{v}\bigr\}$ for some $\vec{v}\in(T_{P^{\ell+1}(\xi)}\Gamma_{\infty})\setminus\{\widevec{P^{\ell+1}(\xi)\infty}\}$, and then
\begin{gather*}
(T_{P^\ell(\xi)}\Gamma_n)\setminus\bigl\{\widevec{P^\ell(\xi)\infty}\bigr\}=
(T_{P^\ell(\xi)}\Gamma_\infty)\setminus\bigl\{\widevec{P^\ell(\xi)\infty}\bigr\}
=\bigl\{\vec{w}\in T_{P^\ell(\xi)}\sP^1:P_*\vec{w}=\vec{v}\bigr\}
\end{gather*}
(also by Facts \ref{th:inparticular}(\ref{item:exhaust})
and \ref{th:polynomial}(\ref{head:treeback})).
Consequently, we compute
\begin{align*}
&\,\#\Bigl(\crit_P\cap\textstyle\bigcup_{\vec{w}\in
(T_{P^\ell(\xi)}\Gamma_n)\setminus
\{\widevec{P^\ell(\xi)\infty}\}}
U(\vec{w})\Bigr)\\
=\,&
\sum_{\vec{w}\in T_{P^\ell(\xi)}\sP^1\text{ such that }P_*\vec{w}=\vec{v}}\bigl(m_P(\vec{w})-1\bigr)
=\bigl(\deg_{P^\ell(\xi)}(P)\bigr)-\bigl(v_{\Gamma_{\infty}}(P^\ell(\xi))-1\bigr)\\ 
<\,&\bigl(\deg_{P^\ell(\xi)}(P)\bigr)-1
=\#\bigl(\crit_P\cap\{\cdot\prec P^\ell(\xi)\}\bigr), 
\end{align*}
the first and third equalities in which are
by a Riemann-Hurwitz-type formula (Fact \ref{th:RH}),
and the second in which is by \eqref{eq:totaldirect},
respectively. Hence we have $P^\ell(\xi)\in\sC_P$
recalling $\{\cdot\prec P^\ell(\xi)\}=\bigcup_{\vec{w}\in
(T_{P^\ell(\xi)}\sP^1)\setminus
\{\widevec{P^\ell(\xi)\infty}\}}
U(\vec{w})$.
\end{proof}

\subsection{Example of Trucco's trees}\label{sec:example}
Pick a prime number $p>4$  
and let $|\cdot|=|\cdot|_p$ denote the $p$-adic absolute value on $K=\bC_p$. Set
\begin{gather*}
 P(z):=\frac{1}{4p^2}z^4-\frac{p+1}{3p^3}z^3+\frac{1}{2p^3}z^2\in\bC_p[z],
\end{gather*}
so that $P'(z)=\frac{1}{p^2}z(z-1)(z-\frac{1}{p})$ and $\crit_P=\{0,1,1/p,\infty\}$. Then
$P$ is tame by \cite[Corollary 6.6]{Faber13I},  
so in particular $\sR_P=\Span(\{0,1,1/p,\infty\})$. 

Noting that
\begin{gather*}
|P(z)|=|1/p|^2|z|^4\text{ on }|z|>|1/p|(>1),\quad
|P(1)|=|1/p|^3,\quad
\text{and}\quad|P(1/p)|=|1/p|^6,
\end{gather*}
we have
\begin{itemize}
  \item $\sJ_P\subset\{\cdot\prec\xi_{B(0,|1/p|)}\}$, and indeed $\xi_P=\xi_{B(0,|1/p|)}$
       (also by a Riemann-Hurwitz-type formula (Fact \ref{th:RH}) and Lemma \ref{th:basetoinfty}(\ref{item:totram})),
 \item $\{1,1/p\}\subset\sfO_P(\infty)$ and $\{\xi_g,\xi_P\}\subset\Gamma_\infty\setminus\partial\Gamma_\infty$ (since $0=P(0)\prec\xi_g\precneqq\xi_P\precneqq\xi_{B(0,|1/p|^3)}
=P(\xi_g)\precneqq\xi_{B(0,|1/p|^6)}=P(\xi_P)$ 
also by Fact \ref{th:polynomial}(\ref{item:ordpres})), so in particular,
 \item $r_{\sP^1,\Gamma_\infty}(1)=\xi_g$ and
       $r_{\sP^1,\Gamma_\infty}(1/p)=\xi_P$.
\end{itemize}
In Figure \ref{fig:tree}, using a Riemann-Hurwitz-type formula (Fact \ref{th:RH}) again, we write
\begin{itemize}
 \item $\sL_1=\{\xi_i:i\in\{1,2,3\}\}$, where $\xi_1,\xi_2\precneqq\xi_g(\precneqq\xi_P)$,
$\widevec{\xi_P\xi_3}\not\in\{\widevec{\xi_P\xi_g},\widevec{\xi_P\xi_\infty}\}$,
$\deg_{\xi_1}(P)=2$, and $\deg_{\xi_2}(P)=\deg_{\xi_3}(P)=1$, and
 \item $\sL_2=\{\xi_{i1}^{(\ell)}:i\in\{1,2,3\},\ell\in\{1,2\}\}\cup
\bigcup_{j\in\{2,3\}}\{\xi_{ij}=\xi_{ij}^{(1)}:i\in\{1,2,3\}\}$, where
$\xi_{i1}^{(\ell)}\precneqq\xi_1$ for any $i,\ell$,
$\xi_{ij}\precneqq\xi_j$ for any $i,j$, $P(\xi_{ij}^{(\ell)})=\xi_i$ for any $i,j,\ell$,
and $\xi_{11}^{(1)}=\xi_{11}^{(2)}$,
\end{itemize}
and indeed $V(\Gamma_0)=\sL_0\sqcup\{\infty\}$,
$V(\Gamma_1)=V(\Gamma_0)\sqcup\{\xi_g\}\sqcup\sL_1$,
$V(\Gamma_2)=V(\Gamma_1)\sqcup P^{-1}(\xi_g)\sqcup\sL_2$.
\begin{figure}[ht]
\centering
\includegraphics[bb=0 0 379 310, width=0.5\textwidth]{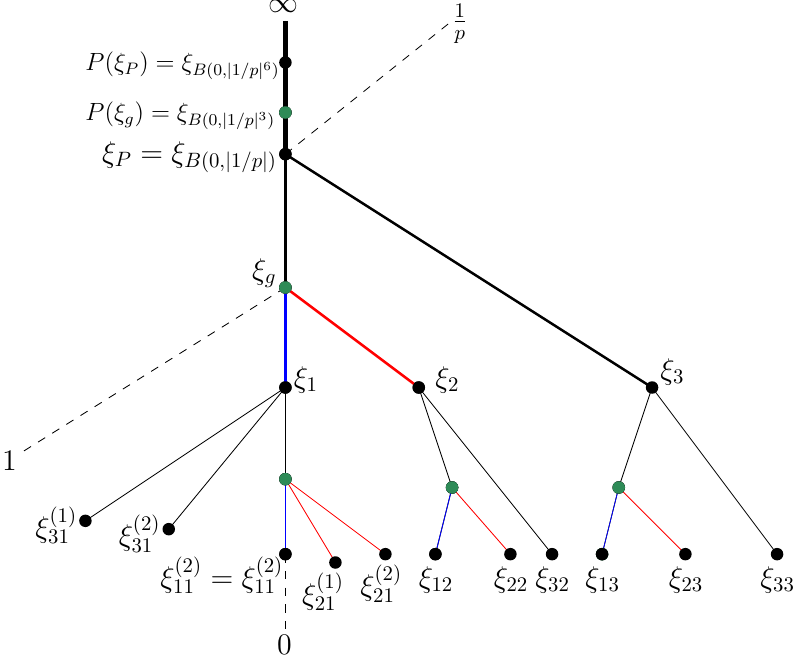}
\caption{Trucco's trees $\Gamma_0$ (fat),
$\Gamma_1$ (fat \& thick),
$\Gamma_2$ (fat, thick, \& thin)
for the $P$}
\label{fig:tree}
\end{figure}

\section{Proof of Theorem \ref{introthm:support}}\label{sec:support}
We establish the following detailed version of Theorem \ref{introthm:support}; the $\sL_n$, $\Gamma_n$, 
and $\Gamma_\infty$ are defined using $P$.
We also estimate from above the cardinality of the subset
\begin{gather}\label{eq:Z}
 \sZ_{P^j,\Gamma_n}:=\bigl\{\xi\in\partial\Gamma_n\setminus\sR_{P^j}:
 P^j(\xi)\in [\xi,\infty)\bigr\}
\end{gather}
in $\partial\Gamma_n\setminus\{\infty\}$.

\begin{theorem}\label{thm:support}
Let $P\in K[z]$ be a polynomial of degree $d>1$. 
\begin{enumerate}[{\em (A)}]
 \item For every $j\ge 1$ and every $n\ge 0$,
       \label{head:weight}
 \begin{gather*}
 \nu_{P^j,\Gamma_n}(\{\xi\})=\frac{1}{d^j-1}\cdot
 \begin{cases}
  v_{\Gamma_n}(\xi)-2 & \text{for every }\xi\in\Gamma_n\setminus\partial\Gamma_n,\\
  0 & \text{for }\xi=\infty,\\
  \deg_{\xi}(P^j)-1 & \text{for every }
  \xi\in\partial\Gamma_n\setminus\{\infty\}\text{ such that }\xi\prec P^j(\xi),\\
  -1 & \text{for every }\xi\in\partial\Gamma_n\setminus\{\infty\}\text{ such that }\xi\not\prec P^j(\xi),
 \end{cases}
 \end{gather*}
and
 \begin{enumerate}[{\em (a)}] 
 \item $\supp(\nu_{P^j,\Gamma_n}|\partial\Gamma_n)
=\partial\Gamma_n\setminus(\sZ_{P^j,\Gamma_n}\sqcup\{\infty\})$,
      \label{item:supportcurvature}
  \item $\sV_{P^j,\Gamma_n}=\supp(\nu_{P^j,\Gamma_n})=V(\Gamma_n)\setminus(\sZ_{P^j,\Gamma_n}\sqcup\{\infty\})\subset\sH^1_{\mathrm{II}}\cap\{\cdot\prec\xi_P\}$,\label{item:vertex}
 \item $\nu_{P^j,\Gamma_n}|(V(\Gamma_n)\setminus\partial\Gamma_n)\ge 0$,
 \item $\nu_{P^j,\Gamma_n}|(V(\Gamma_n)\setminus\partial\Gamma_n)=0$ if and only if 
 either $n=0$ or $P$ is simple, and
 \item  $\nu_{P^j,\Gamma_n}\ge 0$ if and only if either $j\ge n$ or $P$ is simple.
 \label{item:positive}
 \end{enumerate}
 \item If $P$ is either {\em non-simple and tame} or {\em simple}, then
 for every $j\ge 1$, we moreover have
       \label{head:zero}
 \begin{gather*} 
 \sup_{n\ge m}\#\sZ_{P^j,\Gamma_n}\le d^m\quad\text{for }m\gg 1.
 \end{gather*}
\end{enumerate}
\end{theorem} 

In the rest of this subsection, we do 
some preparatory computations.

Let $P\in K[z]$ be a polynomial of degree $d>1$,
and pick $j\ge 1$ and $n\ge 0$. 
Then by 
the first equality in Fact \ref{th:local},
Fact \ref{th:polynomial}(\ref{item:ordpres}), and
\eqref{eq:observation}, we compute 
\begin{multline}
 \rd_{\vec{v}}\bigl(\xi'\mapsto\rho(\xi', P^j(\xi')\wedge_\xi \xi')\bigr)\\
=\begin{cases}
  0 &\text{for any }\xi\in\Gamma_n\setminus\{\infty\}
  \text{ and }\vec{v}=\widevec{\xi\infty},\\
  1 & \text{for any } 
\xi\in\Gamma_n\cap(\sfO_P(\infty)\setminus\{\infty\})
  \text{ and any }\vec{v}\in T_\xi\Gamma_n\setminus\{\widevec{\xi\infty}\},
 \end{cases}\label{prelaplacevalence}
\end{multline}
so that for every $\xi\in\Gamma_n\setminus\{\infty\}$,
\begin{gather}
\bigl(\Delta_{\Gamma_n}(\xi'\mapsto\rho(\xi', P^j(\xi')\wedge_\xi \xi'))\bigr)(\{\xi\})
=\#\bigl((T_\xi\Gamma_n)\setminus\{\widevec{\xi\infty}\}\bigr)=v_{\Gamma_n}(\xi)-1\label{eq:laplacevalence}
\end{gather}
(recalling that if $\xi\in\sJ_P$,
then $\xi=\xi_P$ and $\Gamma_n=[\xi_P,\infty]$).

We use the following computation repeatedly.

\begin{lemma}\label{lem:measure-computation}
For every $\xi\in\Gamma_n$, 
\begin{gather*} 
\nu_{P^j,\Gamma_n}(\{\cdot\prec\xi\})
=\frac{1}{d^j-1}\cdot
\begin{cases} 
\deg_{\xi}(P^j)-1\ge 0 & \text{if }\xi\prec P^j(\xi),\\
-1 & \text{otherwise}.
\end{cases}
\end{gather*}
\end{lemma}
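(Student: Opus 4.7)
The plan is to express the closed-ball mass via the slope formula \eqref{eq:slope}, then evaluate the resulting directional derivative of $\Crucial_{P^j}$ using the difference formula \eqref{eq:difference} and the surplus-multiplicity identity \eqref{eq:surplusdefining}.

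First, I would observe that $\xi \in \sH^1$ (as $\infty$ is the only type I point in $\Gamma_n$) and $\widevec{\xi\infty} \in T_\xi \Gamma_n$ (since $(\xi_B,\infty] \subset \Gamma_n$ by Fact \ref{th:inparticular}(i)), so that the slope formula \eqref{eq:slope} with $\vec{v} = \widevec{\xi\infty}$, combined with $\nu_{P^j,\Gamma_n}(\sP^1)=1$, reduces the desired closed-ball mass to $\tfrac{1}{2} + \rd_{\widevec{\xi\infty}}\Crucial_{P^j}$.

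Next, I would specialize the difference formula \eqref{eq:difference} to $\xi_0 = \xi$ and differentiate in direction $\widevec{\xi\infty}$. Using the two geometric inputs already established in the proof of Theorem \ref{prop:support}(i)---namely $P^j(\xi)\neq\xi$ and $\widevec{\xi P^j(\xi)}=\widevec{\xi\infty}$---Fact \ref{th:local} evaluates the three pertinent directional derivatives (of $\rho(\xi',\xi)$, of $\rho(\xi', P^j(\xi')\wedge_\xi\xi')$, and of $\int\rho(\xi,\xi'\wedge_\xi\cdot)((P^j)^*\delta_\xi)$) to $1$, $0$ (the first case of Fact \ref{th:local} applies since the tangent direction matches $\widevec{\xi P^j(\xi)}$), and $((P^j)^*\delta_\xi)(U(\widevec{\xi\infty}))$, respectively. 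This yields
\[
\rd_{\widevec{\xi\infty}}\Crucial_{P^j} = \tfrac{1}{2} - \frac{((P^j)^*\delta_\xi)(U(\widevec{\xi\infty}))}{d^j-1},
\]
so the desired closed-ball mass equals $\bigl(((P^j)^*\delta_\xi)(\sP^1\setminus U(\widevec{\xi\infty})) - 1\bigr)/(d^j - 1)$ after invoking $((P^j)^*\delta_\xi)(\sP^1) = d^j$.

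Finally, I would evaluate this residual pull-back mass by applying \eqref{eq:surplusdefining} at $\vec{v} = \widevec{\xi\infty}$ with test point $\xi$. Fact \ref{th:polynomial} applied to the polynomial $P^j$ identifies $(P^j)_*\widevec{\xi\infty} = \widevec{P^j(\xi)\infty}$, $m_{P^j}(\widevec{\xi\infty}) = \deg_\xi(P^j)$, and $s_{P^j}(\widevec{\xi\infty}) = d^j - \deg_\xi(P^j)$, so the alternative in \eqref{eq:surplusdefining} hinges only on whether $\xi \in U(\widevec{P^j(\xi)\infty})$. The only mildly delicate point---and what I expect to be the main obstacle---is the geometric translation of this last condition into the stated dichotomy: given $P^j(\xi)\neq\xi$ and $\widevec{\xi P^j(\xi)}=\widevec{\xi\infty}$, the condition $\xi\in U(\widevec{P^j(\xi)\infty})$ is equivalent to $\xi\not\preceq P^j(\xi)$, which coincides with $P^j(\xi)\notin(\xi,\infty)$ (in the sub-case $P^j(\xi)\in(\xi,\infty)$ one has $\xi\prec P^j(\xi)$ and hence $\xi\notin U(\widevec{P^j(\xi)\infty})$, whereas in the sub-case $P^j(\xi)\notin(\xi,\infty)$ the two points are incomparable and $\xi\in U(\widevec{P^j(\xi)\infty})$). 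Substituting each outcome of \eqref{eq:surplusdefining} back into the formula from the previous paragraph then delivers the asserted branches $(\deg_\xi(P^j)-1)/(d^j-1)$ and $-1/(d^j-1)$, respectively.
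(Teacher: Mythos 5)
Your proof is correct and follows essentially the same route as the paper: slope formula \eqref{eq:slope} to reduce to $\tfrac12+\rd_{\widevec{\xi\infty}}\Crucial_{P^j}$, then the difference formula \eqref{eq:difference} with Fact \ref{th:local} (using $P^j(\xi)\neq\xi$ and $\widevec{\xi P^j(\xi)}=\widevec{\xi\infty}$ from the proof of Theorem \ref{prop:support}(i)), and finally \eqref{eq:surplusdefining} together with Fact \ref{th:polynomial} to split into the two cases according to whether $P^j(\xi)\in(\xi,\infty)$. The case analysis at the end, including the incomparability argument when $P^j(\xi)\notin(\xi,\infty)$, matches the paper's computation.
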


\begin{proof}
The case of $\xi=\infty$ is clear.
Pick $\xi\in V(\Gamma_n)\setminus\{\infty\}$. 
Then by the slope formula \eqref{eq:slope}
and the first case in \eqref{prelaplacevalence},
we compute
\begin{align*}
\nu_{P^j,\Gamma_n}(\{\cdot\prec\xi\})
&=1-\nu_{P^j,\Gamma_n}(U(\widevec{\xi\infty}))
=\frac{1}{2}+\rd_{\widevec{\xi\infty}}\Crucial_{P^j}\\
&=1+\frac{\rd_{\widevec{\xi\infty}}\bigl((\xi'\mapsto\rho(\xi',P^j(\xi')\wedge_\xi \xi'))
-(\xi'\mapsto\int_{\sP^1}\rho(\xi,\xi'\wedge_\xi\cdot)(P^j)^*\delta_\xi)\bigr)}{d^j-1}\\
&=1-\frac{\rd_{\widevec{\xi\infty}}\bigl(\xi'\mapsto\int_{\sP^1}\rho(\xi,\xi'\wedge_\xi\cdot)(P^j)^*\delta_\xi\bigr)}{d^j-1}.
\end{align*}
If $\xi\in\Gamma_n\cap(\Omega_P\setminus\{\infty\})$, 
then by \eqref{eq:observation} and 
Fact \ref{th:polynomial}(\ref{head:direcsurp}), we have
($P^j(\xi)\in U(\widevec{\xi\infty})\setminus\{\infty\}$ and)
\begin{gather*} 
 (P^j)_*\widevec{\xi\infty}=\widevec{P^j(\xi)\infty}
\begin{cases}
 \neq\widevec{P^j(\xi)\xi} &\text{if }\xi\prec P^j(\xi),\\
 =\widevec{P^j(\xi)\xi} &\text{if }\xi\not\prec P^j(\xi), 
\end{cases}
\end{gather*}
and then by
the second equality in Fact \ref{th:local},
\eqref{eq:surplusdefining},
and Fact \ref{th:polynomial}(\ref{head:direcsurp}), we have
\begin{multline*}
\rd_{\widevec{\xi\infty}}\Bigl(\xi'\mapsto\int_{\sP^1}\rho(\xi,\xi'\wedge_\xi\cdot)((P^j)^*\delta_\xi)\Bigr)\\
=\bigl((P^j)^*\delta_\xi\bigr)(U(\widevec{\xi\infty}))
=
\begin{cases}
 s_{P^j}(\widevec{\xi\infty})=d^j-\deg_{\xi}(P^j) &\text{if }\xi\prec P^j(\xi),\\
s_{P^j}(\widevec{\xi\infty})+m_{P^j}(\widevec{\xi\infty})=d^j &\text{if }\xi\not\prec P^j(\xi),
\end{cases}
\end{multline*}
and we are done in this case.
If $\xi\in\sJ_P$ or equivalently $\xi=\xi_P=P(\xi_P)$, 
then we have $(P^j)^{-1}(\xi)=\{\xi\}\subset[\xi,\infty)$ 
(so in particular $\deg_\xi(P^j)=d^j$) and in turn
\begin{gather*}
 \nu_{P^j,\Gamma_n}(\{\cdot\prec\xi\})=1-0=1
=\frac{\deg_\xi(P^j)-1}{d^j-1}
\end{gather*}
by the above computation of 
$\nu_{P^j,\Gamma_n}(\{\cdot\prec\xi\})$ and
the second equality in Fact \ref{th:local}.
Hence we are also done in this case.
\end{proof}

The assertion $\nu_{P^j,\Gamma_n}(\{\infty\})=0$
follows from a general result (\cite[Theorem 3(ii)]{Okuyama20}).

\begin{proof}[Proof of Theorem \ref{thm:support} in the simple case]
Suppose that $P$ is simple.
Then for every $j\ge 1$, we have not only
$\nu_{P^j,\Gamma_n}\equiv\delta_{\xi_P}$ on $\sP^1$
by recalling $\Gamma_n\equiv[\xi_P,\infty]$ and 
applying Lemma \ref{th:basetoinfty}(\ref{item:diverge})(\ref{item:totram})
and Lemma \ref{lem:measure-computation} to any
$\xi\in\Gamma_n\setminus\{\infty\}=[\xi_P,\infty)$, but also
$\sZ_{P^j,\Gamma_n}\equiv\emptyset$
by Lemma \ref{th:basetoinfty}(\ref{item:totram}).
Now all the assertions are clear.
\end{proof}

\begin{proof}[Proof of Theorem \ref{thm:support}(\ref{head:weight}) in the non-simple case]
Suppose that $P$ is non-simple.
Pick $j\ge 1$ and $n\ge 0$.
The computation of $\nu_{P^j,\Gamma_n}(\{\xi\})$
for every $\xi\in\partial\Gamma_n\setminus\{\infty\}$
is included in Lemma \ref{lem:measure-computation}.

For every $\xi\in\Gamma_n\setminus\partial\Gamma_n$,
we have $\Span(\{\xi\}\sqcup P^{-j}(\xi))\subset\Gamma_n$ 
(by Fact \ref{th:inparticular}(\ref{item:endext})), so that
\begin{gather*}
 \bigl((r_{\sP^1,\Gamma_n})_*((P^j)^*\delta_{\xi}-\delta_{\xi})\bigr)(\{\xi\})
 =0-\delta_{\xi}(\{\xi\})=-1. 
\end{gather*}
Then also by the equalities
\eqref{eq:crucialmeas} and \eqref{eq:laplacevalence},
we have
\begin{gather*}
  (d^j-1)\cdot\nu_{P^j,\Gamma_n}(\{\xi\}) 
 =\bigl(v_{\Gamma_n}(\xi)-1\bigr)+(-1)=v_{\Gamma_n}(\xi)-2.
\end{gather*}
Hence the computation of $\nu_{P^j,\Gamma_n}(\{\xi\})$ is also done in this case.

Among the items (\ref{item:supportcurvature})-(\ref{item:positive}),
it remains to show 
\begin{itemize}
 \item  the equality $\sV_{P^j,\Gamma_n}(:=\supp(\nu_{P^j,\Gamma_n})\cup\supp(\nu^{\val}_{\sT_{P^j,\Gamma_n}}))=\supp(\nu_{P^j,\Gamma_n})$ in (\ref{item:vertex}), and 
 \item the ``only if'' part in (\ref{item:positive}).
\end{itemize}
Let us see the inclusion
$\supp(\nu^{\val}_{\sT_{P^j,\Gamma_n}})\subset\supp(\nu_{P^j,\Gamma_n})$,
which will yield the first item above; 
for, by the computation of $\nu_{P^j,\Gamma_n}$, we already have
\begin{gather*}
 \supp(\nu_{P^j,\Gamma_n})=V(\Gamma_n)\setminus(\sZ_{P^j,\Gamma_n}\sqcup\{\infty\}),
\end{gather*} 
which with $\sZ_{P^j,\Gamma_n}\sqcup\{\infty\}\subset\partial\Gamma_n$
and $\sT_{P^j,\Gamma_n}:=\Span(\supp(\nu_{P^j,\Gamma_n}))$ 
yields the desired inclusion.

Finally, let us see the second item above;
for, (under the assumption that $P$ is non-simple,)
suppose also that $(1\le)j<n$.
Then by Fact \ref{th:inparticular}(\ref{item:ends})(\ref{item:basebranch}), there are $\xi_1\in\sL_j\subset\{\cdot\precneqq\xi_P\}$ 
and $\xi_2\in\sL_{n-j}\subset\{\cdot\precneqq\xi_P\}$
such that $\widevec{\xi_P\xi_1}\neq\widevec{\xi_P\xi_2}$,
and then there is also
 $\xi_2'\in\sL_n\cap P^{-j}(\xi_2)
\subset\{\cdot\precneqq\xi_P\}$
by Fact \ref{th:polynomial}(\ref{head:direcsurp}).
 Then we have $P^j(\xi_2')=\xi_2\in\sL_{n-j}\setminus[\xi_2',\xi_P]$ 
so that $\xi_2'\not\prec P^j(\xi_2')$
since $\widevec{\xi_P\xi_2'}=\widevec{\xi_P\xi_1}\neq\widevec{\xi_P\xi_2}$,
and in turn have $(d^j-1)\cdot\nu_{P^j,\Gamma_n}(\{\xi_2'\})=-1<0$ {by Lemma \ref{lem:measure-computation}}.
\end{proof}

\begin{proof}[Proof of Theorem \ref{thm:support}(\ref{head:zero}) 
in the non-simple and tame case]
Suppose that $P$ is non-simple and tame, 
 and pick $j\ge 1$. Then $P^j$ is also tame, so in particular
 \begin{gather}
 \sR_{P^j}=\Span(\crit_{P^j}).\label{eq:tameramif}
 \end{gather}
 For any $n\ge m\ge 0$, by Fact \ref{th:inparticular}({\ref{item:endext}}),
 we have  
 \begin{gather}
 r_{\Gamma_n,\Gamma_m}(\sZ_{P^j,\Gamma_n})
 \subset\partial\Gamma_m\setminus\{\infty\}.\label{eq:retzero}
 \end{gather}
 
\begin{claim}\label{claim:1}
 For any $n>m\gg 1$, 
  $r_{\Gamma_n,\Gamma_m}(\sZ_{P^j,\Gamma_n})\cap\sR_{P^j}=\emptyset$.
 \end{claim}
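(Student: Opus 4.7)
The plan is to identify the retraction explicitly via tree-ancestors, extract a shift identity for $P^j$ from the defining property of $Z_{j,n}$, and then reduce the claim to a tameness-based finiteness statement.

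First, for each $\xi\in Z_{j,n}\subset\cL_n$, iterating Fact \ref{th:inparticular}(\ref{item:descend}) produces a unique chain of tree-ancestors $\xi=\xi^{(0)}\prec\xi^{(1)}\prec\cdots\prec\xi^{(n)}=\xi_B$ with $\xi^{(i)}\in\cL_{n-i}$, so that $[\xi,\infty)$ first meets $\Gamma_m$ at $\xi^{(n-m)}\in\cL_m$; hence
\[
  \eta:=r_{\Gamma_n,\Gamma_m}(\xi)=\xi^{(n-m)}\in\cL_m.
\]
From $P^j(\xi)\in(\xi,\infty)$, the order-preserving homeomorphism $P^j\colon[\xi,\infty)\to[P^j(\xi),\infty)$ of Fact \ref{th:polynomial}, and the uniqueness in Fact \ref{th:inparticular}(\ref{item:descend}), I would argue by induction on $i$ that $P^j(\xi^{(i)})=\xi^{(i+j)}$ for $0\le i\le n-j$; in particular $P^j(\eta)=\xi^{(n-m+j)}\succneqq\eta$, and $P^\ell(\eta)\in\cL_{m-\ell}$ for $0\le\ell\le m$.

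Next, supposing toward a contradiction that $\eta\in\cR_{P^j}$, the chain rule $\deg_\eta(P^j)=\prod_{\ell=0}^{j-1}\deg_{P^\ell(\eta)}(P)$ forces some $P^\ell(\eta)\in\cL_{m-\ell}$ to lie in $\cR_P$. The sets $\{\cL_k\}_{k\ge 0}$ are pairwise disjoint, since $P^\ell(\xi_B)\succneqq\xi_B$ for every $\ell\ge 1$ (Fact \ref{lem:base-point}(\ref{item:basediverge})) implies $\xi_B$ has no preperiod. Once one establishes
\[
  \#\Bigl(\cR_P\cap\bigcup_{k\ge 0}\cL_k\Bigr)<\infty,
\]
there is $k_0$ with $\cL_k\cap\cR_P=\emptyset$ for every $k>k_0$; taking $m>k_0+j$ then forces each $P^\ell(\eta)\in\cL_{m-\ell}\setminus\cR_P$, contradicting $\eta\in\cR_{P^j}$ and proving the claim.

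The main obstacle is therefore the finiteness $\#\bigl(\cR_P\cap\bigcup_{k}\cL_k\bigr)<\infty$. Under tameness and since $\infty$ is a totally ramified point of $P$, $\cR_P=\Hull(\Crit_P)$ is a finite subtree of $\sP^1$, and its intersection $\cR_P\cap\Gamma_\infty$ is a finite subtree of bounded total $\rho$-length. Along each edge of $\cR_P\cap\Gamma_\infty$ the local multiplicity of $P$ is constant and $\ge 2$, so $P$ is $\rho$-expanding there by a factor $\ge 2$. I would combine this expansion with the chain-structure on $\bigcup_k\cL_k$ furnished by Fact \ref{th:inparticular}(\ref{item:retractvertices}) in a pigeonhole/geometric-series style argument, showing that only finitely many iterated preimages of $\xi_B$ fit inside the $\rho$-bounded finite tree $\cR_P\cap\Gamma_\infty$. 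The delicate part will be that $P$ need not preserve $\cR_P$, so one must carefully track how preimages can enter and leave $\cR_P$ across levels $k$.
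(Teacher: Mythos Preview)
Your reduction rests on the finiteness claim $\#\bigl(\cR_P\cap\bigcup_{k\ge 0}\cL_k\bigr)<\infty$, and this claim is false in general. Take the paper's own example $P(z)=(d-1)pz^d-dz^{d-1}$ with $d\ge 3$: the critical point $0$ is a superattracting fixed point, so $0\notin\Omega_P(\infty)$, and $[0,\infty]\subset\cR_P$. On $[0,\xi_B]$ there is a fixed point $\alpha\in\cJ_P$, and $P$ restricts to an order-preserving expanding homeomorphism of $[\alpha,\infty)$ onto itself. Pulling back $\xi_B$ along this restriction gives a point $\xi_P^{(1)}\in\cL_1\cap(\alpha,\xi_B)$, and inductively $\xi_P^{(k)}\in\cL_k\cap(\alpha,\xi_B)\subset\cL_k\cap\cR_P$ for every $k\ge 1$, accumulating at $\alpha$. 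Thus $\cR_P\cap\cL_k\neq\emptyset$ for all $k$, and your ``only finitely many preimages fit in a $\rho$-bounded tree'' sketch breaks down exactly because preimages can pile up at a Julia point lying in $\cR_P$.

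The information you discard after Step~2 is what the paper actually uses. Knowing $P^j(\eta)\succneqq\eta$ alone is not enough (the $\xi_P^{(k)}$ above satisfy it too); the crucial extra datum is that $\xi\notin\cR_{P^j}$, so $\xi$ branches \emph{off} the segment $[\alpha_c,\xi_B]$ at its retraction $\eta_n$ onto $\cR_{P^j}$, i.e.\ $\widevec{\eta_n\xi}\neq\widevec{\eta_n\alpha_c}$. The paper then shows, near $\alpha_c$, that both directions $\widevec{\eta_n\xi}$ and $\widevec{\eta_n\alpha_c}$ are pushed by $(P^j)_*$ to $\widevec{P^j(\eta_n)\alpha_c}$, while already $m_{P^j}(\widevec{\eta_n\alpha_c})=\deg_{\eta_n}(P^j)$ saturates \eqref{eq:totaldirect}---a contradiction. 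Your chain-of-ancestors setup is fine, but to close the argument in the presence of non-escaping critical points you must retain and exploit the side-branch condition rather than reducing to a level-wise disjointness that simply does not hold.
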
 
We postpone giving a proof of this Claim until the end of this subsection.
If there are $\xi\in\partial\Gamma_m\setminus\{\infty\}$ and 
distinct $z_1,z_2\in\{\cdot\precneqq\xi\}\cap\sZ_{P^j,\Gamma_n}(\subset\{\cdot\precneqq\xi\}\cap\partial\Gamma_n)$ 
for some $m\in\bN\cup\{0\}$ and some $n>m$,
then $r_{\Gamma_n,\Gamma_m}(z_1)=r_{\Gamma_n,\Gamma_m}(z_2)=\xi$
(by Fact \ref{th:inparticular}(\ref{item:endext})),
and setting 
\begin{gather*}
 \xi':=z_1\wedge_{\xi}z_2\in\{\cdot\prec\xi\}\cap\Gamma_n,
\end{gather*} 
we have $z_i\precneqq\xi'$,
$z_i\precneqq P^j(z_i)$ (since $z_i\in\sZ_{P^j,\Gamma_n}\subset\sfO_P(\infty)\setminus\{\infty\}$),
and $P^j(z_i)\precneqq P^j(\xi')$ 
(by Fact \ref{th:polynomial}(\ref{item:ordpres})
for $P^j$ and $z_i\precneqq\xi'$),
and then $\xi'\precneqq P^j(\xi')$
also noting that $P^j(\xi')\in U(\widevec{\xi'\infty})$
(by \eqref{eq:observation} for $P^j$ and
$\xi'\in\sfO_P(\infty)\setminus\{\infty\}$),
for each $i\in\{1,2\}$. In particular,
 \begin{gather*}
 (P^j)_*\widevec{\xi' z_1}=
 \widevec{P^j(\xi')P^j(z_1)}=
 \widevec{P^j(\xi')z_1}=
 \widevec{P^j(\xi')\xi'}
 =\widevec{P^j(\xi')z_2}
 =\widevec{P^j(\xi')P^j(z_2)}
 =(P^j)_*\widevec{\xi'z_2}
 \end{gather*}
 (the first and last equalities in which are by Fact \ref{th:polynomial}(\ref{item:ordpres}) again).
 Hence by Fact \ref{th:polynomial}(\ref{item:nondechyp})
 and \eqref{eq:totaldirect}, we have 
 $\deg_{\xi}(P^j)\ge\deg_{\xi'}(P^j)\ge
\sum_{i\in\{1,2\}}m_{\widevec{\xi' z_i}}(P)>1$ 
so $\xi\in\sR_{P^j}$.
 This never occurs if $m\gg 1$ by the above Claim. 

Hence for $m\gg 1$, 
$\sup_{n\ge m}\#Z_{P^j,\Gamma_n}\le\#(\partial\Gamma_m\setminus\{\infty\})\le d^m$, and we are done.
\end{proof}

\begin{proof}[Proof of Claim in the above proof]
Recall that $P$ is non-simple and tame and that $j\ge 1$ is fixed.
 If $\crit_P\subset\sfO_P(\infty)$, then 
 by the chain rule, $P^{-1}(\sfO_P(\infty))=\sfO_P(\infty)$,
 and the connectedness of $\sfO_P(\infty)$,
 we have $\crit_{P^j}=\bigcup_{\ell=0}^{j-1}P^{-\ell}(\crit_P)\subset\sfO_P(\infty)$ and in turn
 $\sR_{P^j}\subset\sfO_P(\infty)$. Then 
 by Fact \ref{th:inparticular}(\ref{item:endext})(\ref{item:exhaust}), 
 we have $(\partial\Gamma_m\setminus\{\infty\})\cap\sR_{P^j}=\emptyset$
 for $m\gg 1$, and we are done in this case.

 Suppose now that $\crit_P\not\subset\sfO_P(\infty)$.
 Then for every $c\in\crit_{P^j}\setminus\sfO_P(\infty)$,
 there is a unique $\alpha_c\in\sJ_P\cap[c,\xi_P)$, and then
 by \eqref{eq:tameramif},
 there is also $\beta_c\in(\alpha_c,\xi_P)$ so close to $\alpha_c$ that
\begin{gather*}
  \bigl((r_{\sP^1,[\alpha_c,\beta_c]})^{-1}((\alpha_c,\beta_c))\bigr)\cap\sR_{P^j}=(\alpha_c,\beta_c)
\subset(\Gamma_\infty\setminus\partial\Gamma_\infty)\cap\sR_{P^j}.
\end{gather*} 
Hence
 for $m\gg 1$, by Fact \ref{th:inparticular}(\ref{item:exhaust}), we have
 \begin{gather*}
 \bigl(\partial\Gamma_{m}\setminus\{\infty\}\bigr)\cap\sR_{P^j}\subset\bigcup_{c\in\crit_{P^j}\setminus\sfO_P(\infty)}(\alpha_c,\beta_c),\label{eq:endramify} 
 \end{gather*}
 and then for any $n>m\gg 1$, also by 
 Fact \ref{th:inparticular}({\ref{item:endext}}), we have
 \begin{multline*}
 \Bigl((r_{\Gamma_n,\Gamma_m})^{-1}\bigl(\partial\Gamma_m\setminus\{\infty\}\bigr)\Bigr)
 \cap\Bigl((r_{\Gamma_n,\Gamma_m})^{-1}(\Gamma_m\cap\sR_{P^j})\Bigr)\\ 
 =\bigl(r_{\Gamma_n,\Gamma_n\cap\sR_{P^j}}\bigr)^{-1}\bigl(
 (r_{\Gamma_n\cap\sR_{P^j},\Gamma_m})^{-1}\bigl((\partial\Gamma_m\setminus\{\infty\})\cap\sR_{P^j}\bigr)\bigr)\\
 \subset
 \bigl(r_{\Gamma_n,\Gamma_n\cap\sR_{P^j}}\bigr)^{-1}
 \Bigl(\Gamma_n\cap\bigcup_{c\in\crit_{P^j}\setminus\sfO_P(\infty)}(\alpha_c,\beta_c)\Bigr)
 =
 \bigcup_{c\in\crit_{P^j}\setminus\sfO_P(\infty)}
 \bigl(r_{\Gamma_n,\Gamma_n\cap[\alpha_c,\beta_c]}\bigr)^{-1}\bigl(\Gamma_n\cap(\alpha_c,\beta_c)\bigr),
 \end{multline*}
 which with \eqref{eq:retzero} yields
 \begin{multline*}
 \sZ_{P^j,\Gamma_n}\cap\bigl((r_{\Gamma_n,\Gamma_m})^{-1}(\Gamma_m\cap\sR_{P^j})\bigr)\\
 \subset\bigcup_{c\in\crit_{P^j}\setminus\sfO_P(\infty)}
 \biggl(\sZ_{P^j,\Gamma_n}\cap\Bigl(\bigl(r_{\Gamma_n,\Gamma_n\cap[\alpha_c,\beta_c]}\bigr)^{-1}\bigl(\Gamma_n\cap(\alpha_c,\beta_c)\bigr)\Bigr)\biggr).
 \end{multline*}

 Let us see that
 for every $c\in\crit_{P^j}\setminus\sfO_P(\infty)$,
 making $\beta_c$ closer to $\alpha_c$ if necessary,
 \begin{gather*}
 \sZ_{P^j,\Gamma_n}\cap 
 \Bigl(\bigl(r_{\Gamma_{n},\Gamma_{n}\cap[\alpha_c,\beta_c]}\bigr)^{-1}\bigl(\Gamma_n\cap(\alpha_c,\beta_c)\bigr)\Bigr)
 =\emptyset\quad\text{for }n\gg 1,
 \end{gather*}
 which will complete the proof;
 for, otherwise, there are $c\in\crit_{P^j}\setminus\sfO_P(\infty)$,
 $n_1>n_2\gg 1$, and  
 $\xi_i\in \sZ_{P^j,\Gamma_{n_i}}$ for $i\in\{1,2\}$, such that 
$\eta_1\precneqq\eta_2$, where
\begin{gather*}
 \eta_i:=r_{\Gamma_{n_i},\Gamma_{n_i}\cap[\alpha_c,\beta_c]}(\xi_i)
=\xi_i\wedge_{\alpha_c}\beta_c\in 
V(\Gamma_{n_i})\cap(\alpha_c,\beta_c),\quad
 i\in\{1,2\}. 
\end{gather*} 
Then for each $i\in\{1,2\}$, 
$\xi_i\precneqq\eta_i$ and $\alpha_c\precneqq\eta_i$, and 
then we have $\xi_i\precneqq P^j(\xi_i)\precneqq P^j(\eta_i)$
 noting that $\xi_i\precneqq P^j(\xi_i)$ 
(since $\xi_i\in\sZ_{P^j,\Gamma_{n_j}}\subset\sfO_P(\infty)\setminus\{\infty\}$)
 and that $P^j(\xi_i)\precneqq P^j(\eta_i)$
 (by Fact \ref{th:polynomial}(\ref{item:ordpres})
 for $P^j$ and $\xi_i\precneqq\eta_i$),
 and in turn have $\eta_i\precneqq P^j(\eta_i)$ 
 noting also that $P^j(\eta_i)\in U(\widevec{\eta_i}\infty)$
 (by \eqref{eq:observation} 
 for $P^j$ and $\eta_i\in\sfO_P(\infty)\setminus\{\infty\}$).
 In particular,
 \begin{gather*}
 (P^j)_*\widevec{\eta_i\xi_i}
 =\widevec{P^j(\eta_i)P^j(\xi_i)}
 =\widevec{P^j(\eta_i)\xi_i}
 =\widevec{P^j(\eta_i)\eta_i}
 =\widevec{P^j(\eta_i)\alpha_c}
 \end{gather*}
 (the first equality in which is by Fact \ref{th:polynomial}(\ref{item:ordpres}) again), 
$i\in\{1,2\}$ (but only the $i=2$ case is needed).
 On the other hand, noting that $\eta_1\precneqq P^j(\eta_1)$
(seen above),
 that
 $P^j(\eta_1)\precneqq P^j(\eta_2)$ (by Fact \ref{th:polynomial}(\ref{item:ordpres}) for $P^j$ and $\eta_1\precneqq\eta_2$),
and $\alpha_c\precneqq\eta_1\precneqq\eta_2\precneqq P^j(\eta_2)$
(seen above), 
 we also have
 \begin{gather*}
 (P^j)_*\widevec{\eta_2\alpha_c}
 =(P^j)_*\widevec{\eta_2\eta_1}
 =\widevec{P^j(\eta_2)P^j(\eta_1)}
 =\widevec{P^j(\eta_2)\eta_1}
 =\widevec{P^j(\eta_2)\eta_2}
 =\widevec{P^j(\eta_2)\alpha_c} 
 \end{gather*}
 (the second equality in which is by Fact \ref{th:polynomial}(\ref{item:ordpres}) again).
 Now making $\beta_c$ closer to $\alpha_c$ further if necessary,
 by Fact \ref{th:polynomial}(\ref{item:locdegdirect})
and \eqref{eq:totaldirect}, we must have
 \begin{gather*}
 \deg_{\alpha_c}(P^j)=\deg_{\eta_2}(P^j)
 \ge m_{P^j}\bigl(\widevec{\eta_2\xi_2}\bigr)
+m_{P^j}\bigl(\widevec{\eta_2\alpha_c}\bigr)
 >0+\deg_{\alpha_c}(P^j),
 \end{gather*} 
 which is a contradiction. Now the proof is complete.
 \end{proof}

\section{Proof of Theorem \ref{introthm:bary}}\label{sec:pf-bary}

We establish the following detailed version of
Theorem \ref{introthm:bary} including
the Hausdorff convergence of $\BC(P^j,\Gamma_n)$
in $(\sH^1,\rho)$ as $n\to\infty$ 
for each $j\ge 1$;
the $\sL_n$, $\Gamma_n$, and
$\Gamma_\infty$ are defined using $P$.

\begin{theorem}\label{thm:bary}
Let $P\in K[z]$ be a polynomial of degree $>1$, and pick $j\ge 1$.
Then for every $n\ge 0$, there are $\xi_n=\xi_n^{(j)}\in\sV_{P^j,\Gamma_n}$ 
and $\eta_n=\eta_n^{(j)}\in\sV_{P^j,\Gamma_n}$ such that
\begin{gather*}
 \BC(\nu_{P^j,\Gamma_n})=[\xi_n,\eta_n]
\subset\sT_{P^j,\Gamma_n}\cap\sR_P\quad\text{and that}\quad
\xi_{n+1}\prec\xi_n\prec\eta_n\prec\xi_P,
\end{gather*}
and both the limits 
$\xi_{\infty}=\xi_\infty^{(j)}:=\lim_{n\to\infty}\xi_n$ 
and $\eta_{\infty}=\eta_\infty^{(j)}:=\lim_{n\to\infty}\eta_n$ in 
$\Gamma_\infty$ exist 
(so $\xi_\infty\prec\eta_\infty\prec\xi_P$). 
Moreover, 
\begin{enumerate}[{\em (a)}]
 \item \label{head:nontrivial}
       If $[\xi_m,\eta_m]$ is non-trivial for some $m\ge 0$, then 
 $\eta_n\equiv\eta_{\infty}\in\sV_{P^j,\Gamma_m}$ for every $n\ge m$. 
 \item \label{head:tri}
 One and only one of the following three statements is the case$;$ 
 \begin{enumerate}[{\em (i)}]
 \item\label{item:stationary} there is $m\ge 0$ such that $\xi_m\not\in\partial\Gamma_m$; then $[\xi_n,\eta_n]\equiv[\xi_{\infty},\eta_{\infty}]$ for every $n\ge m$, so that $\xi_\infty,\eta_\infty
\in\sV_{P^j,\Gamma_m}\setminus\partial\Gamma_m$.
 \item for every $n\ge 0$, 
$\xi_{n+1}\precneqq\xi_n\in\partial\Gamma_n$; then
       $\xi_\infty\in\partial\Gamma_\infty\setminus\{\infty\}$.
       \label{item:non-stationary}
 \item $P$ is simple, and then 
 $\xi_n\equiv\xi_\infty=\xi_P=\eta_\infty\equiv\eta_n\in\partial\Gamma_n\setminus\{\infty\}$ for every $n\ge 0$.
 \end{enumerate}
 \item \label{head:hausdorff}
$[\xi_{\infty},\eta_{\infty}]\subset\sH^1\cap\{\cdot\prec\xi_P\}\cap\Gamma_\infty\cap\sR_P$ and, in particular, the Hausdorff convergence
 $\lim_{n\to\infty}\BC(\nu_{P^j,\Gamma_n})=[\xi_{\infty},\eta_{\infty}]$ holds in $(\Gamma_\infty\cap\sH^1,\rho)$.
\end{enumerate}
\end{theorem}

In Theorem \ref{thm:bary}, when $P$ is non-simple,
the assumption in the statement
(\ref{head:nontrivial}) can be the case, and
both the statements (\ref{item:stationary}) and (\ref{item:non-stationary})
can also be the case (see Subsection \ref{sec:bestpossible}).

Let $P\in K[z]$ be a polynomial of degree $d>1$.

\begin{proof}[Proof of Theorem \ref{thm:bary} in the simple case]
Suppose that $P$ is simple. Then for every $j\ge 1$ and every $n\ge 0$,
 we have already seen that $\nu_{P^j,\Gamma_n}\equiv\delta_{\xi_P}$ 
 on $\sP^1$ in Section \ref{sec:support}, so in particular
 \begin{gather}
 \BC(\nu_{P^j,\Gamma_n})\equiv\{\xi_P\}
 \equiv\sH^1\cap\{\cdot\prec\xi_P\}\cap
\Gamma_\infty\cap\sR_P. \label{eq:coincidence}
 \end{gather}
 Now all the assertions are clear.
\end{proof}

For a while, we do not assume that $P$ is non-simple.
The following is an improvement of Proposition \ref{coro:measure1/2}.

\begin{proposition}[a strict mass upper bound $1/2$ for Berkovich closed balls]\label{coro:measure1/2strong}
Pick $j\ge 1$ and $n\ge 0$.
Then for every $\xi\in V(\Gamma_n)\setminus\{\infty\}$ 
and every $\vec{v}\in T_{\xi}\sP^1\setminus\{\widevec{\xi\infty}\}$
but at most one direction, we have
$\nu_{P^j,\Gamma_n}(U(\vec{v}))<1/2$.
\end{proposition}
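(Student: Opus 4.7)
The idea is to compute each mass $((\iota_{\Gamma_n,\sP^1})_*\nu_{P^j,\Gamma_n})(U(\vec{v}))$ explicitly via the slope formula and then rule out two ``big'' directions by a preimage count. First I would reduce to the non-trivial case: if $v_{\Gamma_n}(\xi) \leq 2$, then $T_\xi\Gamma_n$ contains at most one direction other than $\widevec{\xi\infty}$ and $\nu_{P^j,\Gamma_n}$ is supported on $\Gamma_n$, so the statement is vacuous; hence assume $\xi$ is a branch point with $v_{\Gamma_n}(\xi) \geq 3$. By Fact~\ref{th:inparticular}(v), $\xi \in \Omega_P(\infty)\setminus\{\infty\}$, so $P^j(\xi)\neq\xi$; set $\vec{v}^*:=\widevec{\xi\,P^j(\xi)}$, which lies in $T_\xi\Gamma_n$ because by Fact~\ref{th:inparticular}(vi) the iterate $P^j$ sends $\Gamma_n$ into $\Gamma_{\max(n-j,0)}\subset\Gamma_n$.

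Next, for $\vec{v}\in T_\xi\Gamma_n$ set $N(\vec{v}) := ((P^j)^*\delta_\xi)(U(\vec{v}))$, so $\sum_{\vec{v}\in T_\xi\sP^1} N(\vec{v}) = d^j$. The key identity I would derive is
\[
((\iota_{\Gamma_n,\sP^1})_*\nu_{P^j,\Gamma_n})(U(\vec{v})) = \frac{N(\vec{v}) - \epsilon(\vec{v})}{d^j - 1}, \qquad \vec{v}\in T_\xi\Gamma_n,
\]
where $\epsilon(\vec{v}^*) = 0$ and $\epsilon(\vec{v})=1$ for $\vec{v}\neq\vec{v}^*$. This follows from applying the difference formula~\eqref{eq:difference} at $\xi_0=\xi$, evaluating the two directional derivatives by Fact~\ref{th:local} (observing that $\rd_{\vec{v}}(\xi'\mapsto \rho(\xi',P^j(\xi')\wedge_\xi\xi'))$ equals $0$ precisely when $\vec{v} = \vec{v}^*$ and $1$ otherwise), and then invoking the slope formula~\eqref{eq:slope}.

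Now suppose for contradiction that distinct $\vec{v}_1,\vec{v}_2 \in T_\xi\sP^1\setminus\{\widevec{\xi\infty}\}$ both carry mass $\geq 1/2$; since $\nu_{P^j,\Gamma_n}$ is supported on $\Gamma_n$, both lie in $T_\xi\Gamma_n$. I would split into two cases using Fact~\ref{th:polynomial} and \eqref{eq:surplusdefining}. If $P^j(\xi)\in(\xi,\infty)$, then $\vec{v}^* = \widevec{\xi\infty}$, so $N(\widevec{\xi\infty}) = s_{P^j}(\widevec{\xi\infty}) = d^j - \deg_\xi P^j$ and $\sum_{\vec{v}\neq\widevec{\xi\infty}} N(\vec{v}) = \deg_\xi P^j \leq d^j$; since neither $\vec{v}_i$ equals $\vec{v}^*$, the identity forces each $N(\vec{v}_i)\geq (d^j+1)/2$, hence $N(\vec{v}_1)+N(\vec{v}_2)\geq d^j + 1$, contradicting the bound. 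If instead $P^j(\xi)\notin(\xi,\infty)$, then a quick tree observation shows $\widevec{P^j(\xi)\xi} = \widevec{P^j(\xi)\infty}$ (whether $P^j(\xi)\prec\xi$ or the two are incomparable, the segment from $P^j(\xi)$ to $\xi$ begins by going ``up''), so $\xi\in U((P^j)_*\widevec{\xi\infty})$; then \eqref{eq:surplusdefining} yields $N(\widevec{\xi\infty}) = s_{P^j}(\widevec{\xi\infty}) + m_{P^j}(\widevec{\xi\infty}) = d^j$, forcing $N(\vec{v}) = 0$ for every $\vec{v}\neq\widevec{\xi\infty}$. The identity then gives mass $\in\{0,-1/(d^j-1)\}$ in every such direction, again contradicting the assumption.

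The main obstacle is the mass identity with the subtle case-split $\vec{v}=\vec{v}^*$ versus $\vec{v}\neq\vec{v}^*$, which pins down the location of the preimages of $\xi$ relative to $\xi$. Once this identity is secured, the remaining argument is a clean arithmetic contradiction driven by the two polynomial-specific facts $s_{P^j}(\widevec{\xi\infty}) = d^j - \deg_\xi P^j$ and $m_{P^j}(\widevec{\xi\infty}) = \deg_\xi P^j$ from Fact~\ref{th:polynomial}, together with the combinatorial identity $\sum_{\vec{v}}N(\vec{v})=d^j$.
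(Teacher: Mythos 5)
Your proof is correct and follows essentially the same route as the paper: assume two directions other than $\widevec{\xi\infty}$ each of mass $\ge 1/2$, convert those masses into local multiplicity data at $\xi$ via the slope formula \eqref{eq:slope}, and contradict the count $\deg_\xi(P^j)\le d^j$ coming from \eqref{eq:totaldirect} and Fact \ref{th:polynomial}. The only difference is bookkeeping: you evaluate the directional masses at $\xi$ itself through the difference formula, Fact \ref{th:local}, and \eqref{eq:surplusdefining} (via $N(\vec{v})=((P^j)^*\delta_\xi)(U(\vec{v}))$ and $\sum_{\vec{v}}N(\vec{v})=d^j$), whereas the paper evaluates at nearby points $\xi'_\ell$ using Lemma \ref{lem:measure-computation} together with the claim $(P^j)_*\vec{v}_1=(P^j)_*\vec{v}_2$ -- the same information in a slightly different packaging.
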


\begin{proof}
Pick $j\ge 1$, $n\ge 0$, and $\xi\in V(\Gamma_n)\setminus\partial\Gamma_n$.
Then $P^j(\xi)\neq\xi\in\sfO_P(\infty)\setminus\{\infty\}$.
For any $\vec{v}\in T_{\xi}\sP^1
\setminus\{\widevec{\xi\infty}\}$ such that
$\nu_{P^j,\Gamma_n}(U(\vec{v}))\ge 1/2$,
by the finiteness of the support 
of $\nu_{P^j,\Gamma_n}$, 
Lemma \ref{lem:measure-computation}, and 
Fact \ref{th:polynomial}(\ref{item:locdegdirect}), 
if $\xi'\in U(\vec{v})\cap(\Gamma_n\setminus\partial\Gamma_n)$ close enough to $\xi$, then
\begin{align*}
\frac{1}{2}\le\nu_{P^j,\Gamma_n}(U(\vec{v}))
=\nu_{P^j,\Gamma_n}(\{\cdot\prec\xi'\})
&=\frac{\deg_{\xi'}(P^j)-1}{d^j-1}\\
&=\frac{m_{P^j}(\widevec{\xi'\infty})-1}{d^j-1} 
=\frac{m_{P^j}(\widevec{\xi'\xi})-1}{d^j-1}
=\frac{m_{P^j}(\vec{v})-1}{d^j-1} 
\end{align*}
and 
$\xi'\prec P^j(\xi')$.
Hence if such ``exceptional'' $\vec{v}\in T_{\xi}\sP^1
\setminus\{\widevec{\xi\infty}\}$
exists, then $\xi\prec P^j(\xi)$
by the the continuity of $P^j$,
and for any $\xi' \in U(\vec{v})\cap(\Gamma_n\setminus\partial\Gamma_n)$ close enough to $\xi$,
we also have 
\begin{gather}
 (\xi'\precneqq)\xi\precneqq P^j(\xi');\label{eq:belowtoabove} 
\end{gather}
for, otherwise, there must exist a sequence $(\xi'(i))_{i=1}^\infty$ 
in $U(\vec{v})\cap(\Gamma_n\setminus\partial\Gamma_n)$ tending to $\xi$ as $i\to\infty$ such that 
$\xi'(i)\prec P^j(\xi'(i))\prec\xi$ for 
any $i\ge 1$, and then $P^j(\xi)=\lim_{i\to\infty}P^j(\xi'(i))=\xi$
by the continuity of $P^j$, which contradicts 
$P^j(\xi)\neq\xi$. 

Suppose to the contrary that there are distinct such ``exceptional'' 
$\widevec{v_1},\widevec{v_2}\in T_{\xi}\sP^1
\setminus\{\widevec{\xi\infty}\}$.
We claim that $(P^j)_*\widevec{v_1}=(P^j)_*\widevec{v_2}$; for, otherwise, we assume 
$(P^j)_*\widevec{v_1}\neq\widevec{P^j(\xi)\xi}$
without loss of generality. Then 
for any $\xi'\in U(\widevec{v_1})\cap(\Gamma_n\setminus\partial\Gamma_n)$ so close to $\xi$
that $(P^j)_*\widevec{v_1}=\widevec{P^j(\xi)P^j(\xi')}$, we must have
$\xi\precneqq P^j(\xi')\precneqq P^j(\xi)$
by \eqref{eq:belowtoabove} and Fact \ref{th:polynomial}(\ref{item:ordpres}),
so that $(P^j)_*\widevec{v_1}=\widevec{P^j(\xi)\xi}$,
which contradicts the assumption.
Once this claim is at our disposal, 
also by \eqref{eq:totaldirect},
we must have
\begin{gather*}
1=\frac{1}{2}+\frac{1}{2}
\le\frac{m_{P^j}(\widevec{v_1})-1}{d^j-1}+\frac{m_{P^j}(\widevec{v_2})-1}{d^j-1}
\le\frac{\deg_\xi(P^j)-2}{d^j-1}\le\frac{d^j-2}{d^j-1}<1, 
\end{gather*}
which is a contradiction.
\end{proof}

\begin{lemma}\label{th:main-bary-1}
For every $j\ge 1$ and every $n\ge 0$, the barycenter 
$\BC(\nu_{P^j,\Gamma_n})$
is a (possibly trivial) interval in $\sT_{P^j,\Gamma_n}\cap\sR_{P}$ having end points in
$\sV_{P^j,\Gamma_n}$. 
Moreover, 
$r_{\sP^1,\BC(\nu_{P^j,\Gamma_n})}(\infty)
\in\partial\BC(\nu_{P^j,\Gamma_n})$.
\end{lemma}

\begin{proof}
Everything in the first assertion
but the inclusion $\BC(\nu_{P^j,\Gamma_n})\subset\sR_P$ is 
by Proposition \ref{lem:baryhalf}(\ref{head:barymin}). 
For every $\xi\in\Gamma_n\setminus\sR_P$, 
noting that $\deg_\xi(P^j)=\deg_\xi (P)\cdot\deg_{P(\xi)}(P^{j-1})\le 1\cdot d^{j-1}$,
by Lemma \ref{lem:measure-computation}, we have
\begin{gather*}
\nu_{P^j,\Gamma_n}\bigl(U(\widevec{\xi\infty})\bigr)
=1-
\nu_{P^j,\Gamma_n}(\{\cdot\prec\xi\})
\ge 1-\frac{d^{j-1}-1}{d^j-1}>\frac{1}{2},
\end{gather*}
so that $\xi\in\Gamma_n\setminus\BC(\nu_{P^j,\Gamma_n})$. Hence this remaining inclusion also holds.
Let us see the second assertion.
Suppose that 
$\eta:=r_{\sP^1,\BC(\nu_{P^j,\Gamma_n})}(\infty)\not\in\partial\BC(\nu_{P^j,\Gamma_n})$. 
Then $\BC(\nu_{P^j,\Gamma_n})$ is non-trivial,
and by Proposition \ref{coro:measure1/2strong}, 
there is $\xi\in\partial\BC(\nu_{P^j,\Gamma_n})$ such that 
$\nu_{P^j,\Gamma_n}(U(\widevec{\eta\xi}))<1/2$, which 
contradicts Lemma \ref{th:bcchar}(\ref{item:everypoint}).
\end{proof}

\begin{proof}[Proof of Theorem \ref{thm:bary} in the non-simple case]
Suppose that $P$ is non-simple, 
and pick $j\ge 1$. Then for each $n\ge 0$, 
by Theorem \ref{thm:support}(\ref{item:vertex}) and
Lemma \ref{th:main-bary-1},
we have $\BC(\nu_{P^j,\Gamma_n})=[\xi_n,\eta_n]\subset\sT_{P^j,\Gamma_n}\cap\sR_P$, where
$\xi_n,\eta_n\in\sV_{P^j,\Gamma_n}\subset\sH^1_{\mathrm{II}}$ and
$\xi_n\prec\eta_n\prec\xi_P$. 

By Fact \ref{th:inparticular}(\ref{item:ends})({\ref{item:endext}}), if $\xi_{n+1}\precneqq\xi_n\in\sL_n$ for every $n\ge 0$,
then writing $\xi_n=\xi_{B_n}$ for each $n\ge 0$,
we have $\lim_{n\to\infty}\xi_n=\xi_\infty\in
\partial\Gamma_\infty\setminus\{\infty\}$ in $\sP^1$
where $\xi_\infty$ is the point in $\sP^1$ represented by
the decreasing sequence $(B_n)$ of the $K$-closed disks.
Now everything about the asymptotic of 
$[\xi_n,\eta_n]$ as $n\to\infty$
but the statement (\ref{head:hausdorff}) holds
by Proposition \ref{lem:baryhalf}(\ref{head:singleton})
(and Fact \ref{th:inparticular}(\ref{item:ends})({\ref{item:endext}}) again).
In any case, we also have $[\xi_\infty,\eta_\infty]
\subset\sR_P$
since $\sR_P$ is closed. 
Let us see $\xi_\infty\in\sH^1$, which will complete the proof; 
indeed, this is the case
by $\xi_\infty\equiv\xi_n\in\sH^1_{\mathrm{II}}$ 
for $n\gg 1$
when the item (\ref{item:stationary}) is the case.
Suppose that the item (\ref{item:non-stationary}) is the case.
Then for every $n\ge 1$, 
by Lemma \ref{lem:measure-computation}
and Fact \ref{th:inparticular}(\ref{item:ends})(\ref{item:endext}), 
we not only have
\begin{gather*}
 0<\frac{1}{2}\le 1-\nu_{P^j,\Gamma_n}(U_{\widevec{\xi_n\xi_{n-1}}})
=\nu_{P^j,\Gamma_n}(\{\cdot\prec\xi_n\})
=\frac{\deg_{\xi_n}(P^j)-1}{d^j-1},
\end{gather*}
so $1/(\deg_{\xi_n}(P^j))\le 2/(d^j+1)<1$,
but also have $\xi_n\prec P^j(\xi_n)=\xi_{n-j}\in\sL_{n-j}$.
Hence also by Fact \ref{th:polynomial}(\ref{head:monotone}),
for every $n\ge 2j$, $P^j$ restricts to
a homeomorphism $[\xi_n,\xi_{n-j}]\to[\xi_{n-j},\xi_{n-2j}]$, 
which preserves $\prec$ and the inverse of which is
$(1/\deg_{\xi_n}(P^j))$-Lipschitz continuous 
in the $\rho$-length parameters. Consequently, we have
$\rho(\xi_n,\xi_{n-j})=O((2/(d^j+1))^{n/j})$ as $n\to\infty$
and in turn have $\rho(\xi_P,\xi_{\infty})\le\limsup_{n\to\infty}\rho(\xi_P,\xi_n)<+\infty$ by the triangle inequality. 
\end{proof}

\section{Proof of Theorem \ref{thm:minres}}\label{sec:pf-minres}
Let $P\in K[z]$ be a polynomial of degree $d>1$;
the $\sL_n$, $\Gamma_n$, and $\Gamma_\infty$ are defined using $P$. 
When $P$ is simple, all the assertions in Theorem \ref{thm:minres} are clear
from \eqref{eq:coincidence} and $\MinResLoc_{P^j}\equiv\{\xi_P\}$ for every $j\ge 1$.

In the rest of this section,
suppose that $P$ is non-simple. 

\subsection{Proof of Theorem \ref{thm:minres}(\ref{item:bcmrl}) in the non-simple case}\label{sec:A1}
Pick $j\ge 1$, and set 
\begin{gather*}
 \Fix_{P^j}:=\{\text{classical fixed points of }P^j\}.
\end{gather*}
Note that $\Gamma_{P^j}^{\FP}=\Span(\Fix_{P^j})$ 
so $\MinResLoc_{P^j}$ is an interval in $\Span(\Fix_{P^j})\cap\sH^1$
(see Remark \ref{th:crucialtree}).

\begin{lemma}\label{lem:MinResLoc-in-tree}
$\MinResLoc_{P^j}\subset\Gamma_\infty$. 
\end{lemma}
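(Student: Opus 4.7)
The plan is to establish the contrapositive: given $\xi\in\sH^1\setminus\Gamma_\infty$, I will exhibit some $\eta\in\Gamma_\infty\cap\sH^1$ with $\Crucial_{P^j}(\eta)<\Crucial_{P^j}(\xi)$, forcing $\xi\notin\MinResLoc_{P^j}$ and hence $\MinResLoc_{P^j}\subset\Gamma_\infty$, since $\MinResLoc_{P^j}$ is the minimum locus of $\Crucial_{P^j}$.

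Fix any $n\ge 1$ and put $\eta:=r_{\sP^1,\Gamma_n}(\xi)$. Since $\xi_B\in\Gamma_n$ and $\xi\neq\infty$, the retraction lies on $[\xi,\xi_B]$, so $\eta\in\Gamma_n\cap\sH^1$, a type II point. Set $\vec{v}:=\widevec{\eta\xi}$. By the defining property of the retraction, $U(\vec{v})\cap\Gamma_n=\emptyset$ and therefore $\vec{v}\notin T_\eta\Gamma_n$. To apply the slope formula \eqref{eq:slope}, enlarge to the finite subtree $\tilde\Gamma:=\Gamma_n\cup[\eta,\xi]$, on which $\vec{v}\in T_\eta\tilde\Gamma$ and $\xi$ is an end point with $v_{\tilde\Gamma}(\xi)=1$. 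The formula then reads
\[
\rd_{\vec{v}}\Crucial_{P^j}(\eta)=\tfrac{1}{2}-((\iota_{\tilde\Gamma,\sP^1})_*\nu_{P^j,\tilde\Gamma})(U(\vec{v})).
\]

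The heart of the argument is to bound $((\iota_{\tilde\Gamma})_*\nu_{P^j,\tilde\Gamma})(U(\vec{v}))<\tfrac{1}{2}$. By Proposition \ref{prop:measure-sum} applied to $\Gamma_n\subset\tilde\Gamma$, the mass in each direction of $T_\eta\Gamma_n$ is preserved. Subtracting the total-mass-one identity at $\eta$ in the two trees reduces the computation to the three point-weights $\nu_{P^j,\Gamma_n}(\{\eta\})$, $\nu_{P^j,\tilde\Gamma}(\{\eta\})$, and the endpoint weight $\nu_{P^j,\tilde\Gamma}(\{\xi\})$. These are all given by the local weight formula of Theorem \ref{prop:support}(i), whose proof in Section \ref{sec:support} is purely local at each vertex and adapts verbatim to $\tilde\Gamma$: the only new vertex is $\xi$ itself, an end point with $v_{\tilde\Gamma}(\xi)=1$, and for $\xi\in\sH^1\setminus\Gamma_\infty$ the endpoint lies in the ``exceptional'' case $P^j(\xi)\notin(\xi,\infty)$, contributing $-1/(d^j-1)$. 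Bookkeeping then yields $((\iota_{\tilde\Gamma})_*\nu_{P^j,\tilde\Gamma})(U(\vec{v}))=-1/(d^j-1)$, so $\rd_{\vec{v}}\Crucial_{P^j}(\eta)=\tfrac{1}{2}+1/(d^j-1)>0$. Convexity of $\Crucial_{P^j}$ along $[\eta,\xi]\subset(\sH^1,\rho)$ then gives $\Crucial_{P^j}(\xi)\ge\Crucial_{P^j}(\eta)+(\tfrac{1}{2}+1/(d^j-1))\rho(\xi,\eta)>\Crucial_{P^j}(\eta)$, as required.

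The main obstacle is justifying the key input $P^j(\xi)\notin(\xi,\infty)$ for every $\xi\in\sH^1\setminus\Gamma_\infty$. This rests on the forward invariance $P(\Gamma_\infty)\subset\Gamma_\infty$, which follows by continuity and density from $P(\Gamma_n)\subset\Gamma_{n-1}$ in Fact \ref{th:inparticular}(vi): if $P^j(\xi)$ lay strictly between $\xi$ and $\infty$ with $\xi\notin\Gamma_\infty$, then a short direction-tracking argument using Fact \ref{th:polynomial} (applied iteratively along $P,P^2,\ldots,P^j$) would trap the segment from $\xi$ to $P^j(\xi)$ inside the forward-invariant set $\Gamma_\infty$, contradicting $\xi\notin\Gamma_\infty$. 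Once this direction lemma is in place, the remainder of the proof is routine point-weight accounting.
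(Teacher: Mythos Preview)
Your overall strategy—showing that $\Crucial_{P^j}$ strictly increases as one moves off $\Gamma_\infty$—is different from the paper's proof, which instead restricts to $\xi\in\Gamma_{\Fix_{P^j}}\setminus\Gamma_\infty$ (using Rumely's containment $\MinResLoc_{P^j}\subset\Gamma_{\Fix_{P^j}}$), places such $\xi$ on a segment $(z_0,\beta)$ with $z_0$ a classical fixed point and $\beta\in\cJ_P$, and then verifies non-semistability of the coefficient reduction at $\xi$ via the depth formulas \eqref{eq:faber}--\eqref{eq:KN} and Theorem~\ref{prop:MinResLoc-stable}. Your approach could in principle give a more self-contained argument, but as written it has a genuine gap.

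The problem is the justification of your ``key input'' $P^j(\xi)\notin(\xi,\infty)$. Forward invariance $P(\Gamma_\infty)\subset\Gamma_\infty$ says nothing about points \emph{outside} $\Gamma_\infty$; your ``direction-tracking'' sketch is a non-sequitur. What one actually needs is \emph{complete} invariance $P^{-1}(\Gamma_\infty)=\Gamma_\infty$: from $P^{-1}(\cJ_P\cup\{\infty\})=\cJ_P\cup\{\infty\}$ and Fact~\ref{th:polynomial} one can show that if $\vec v\notin T_\eta\Gamma_\infty$ then $U(P_*\vec v)\cap(\cJ_P\cup\{\infty\})=\emptyset$, whence $P^j(\xi)\notin\Gamma_\infty$. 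This rules out $P^j(\xi)\in[\eta,\infty)$, but you must still exclude $P^j(\xi)\in(\xi,\eta)$, and this requires a genuine case analysis: when $P^j(\eta)\neq\eta$ one compares the disjoint balls $U(\vec v)$ and $U((P^j_*)\vec v)$; when $P^j(\eta)=\eta$ and $(P^j)_*\vec v=\vec v$ one is in a bounded periodic Fatou component and must invoke either the isometry property (indifferent case) or attraction towards a classical fixed point (attracting case). None of this is ``routine point-weight accounting.''

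There is also a secondary issue: you take $\eta=r_{\sP^1,\Gamma_n}(\xi)$ rather than $r_{\sP^1,\Gamma_\infty}(\xi)$, so $(\eta,\xi)$ may meet $\Gamma_\infty\setminus\Gamma_n$, and the ``proof adapts verbatim'' claim for Theorem~\ref{prop:support}(i) is unsafe—its derivation uses $\widevec{\zeta P^j(\zeta)}=\widevec{\zeta\infty}$, which holds for $\zeta\in\Gamma_n\subset\Omega_P(\infty)$ but not for arbitrary $\zeta\in\sH^1\setminus\Gamma_\infty$ lying in a bounded Fatou component. A cleaner route along your lines would be to take $\eta=r_{\sP^1,\Gamma_\infty}(\xi)$ and compute $\rd_{\vec v}\Crucial_{P^j}$ directly from \eqref{eq:difference} and Fact~\ref{th:local}, using complete invariance to get $((P^j)^*\delta_\eta)(U(\vec v))=0$; this yields $\rd_{\vec v}\Crucial_{P^j}\ge\tfrac12$ without ever needing the endpoint weight at $\xi$.
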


\begin{proof}
Pick $\xi\in(\Span(\Fix_{P^j})\cap\sH^1_{\mathrm{II}})\setminus\Gamma_\infty$.
By Fact \ref{th:inparticular}(\ref{item:exhaust}), 
there is $z_0\in(\{\cdot\precneqq\xi\}\cap\Fix_{P^j})\setminus\sJ_P$ and,
setting
$\alpha:=r_{\sP^1,\Span(\Fix_{P^j})\cap\Gamma_\infty}(z_0)$,
we have 
\begin{gather*}
 z_0\precneqq\xi\precneqq\alpha\precneqq\infty,
\end{gather*}
and the Berkovich open ball $U(\widevec{\alpha z_0})$
is the Berkovich Fatou component $\sfO_P(z_0)$
of $P$ containing (the either (super)attracting or indifferent
fixed point) $z_0$ of $P^j$
and having $\partial\sfO_P(z_0)=\{\alpha\}$,
so in particular $P^j(\alpha)=\alpha$. 
By Fact \ref{th:polynomial}(\ref{head:monotone}), 
$P^j$ restricts to a homeomorphic automorphism
of $[z_0,\alpha]$ preserving $\prec$, so in particular
\yo{
\begin{gather*}
 (P^j)_*\widevec{\alpha z_0}\Bigl(=\widevec{\alpha(P^j(z_0))}\Bigr)=\widevec{\alpha z_0}.
\end{gather*}
By the difference formula \eqref{eq:difference},
Fact \ref{th:local}, the defining equality \eqref{eq:surplusdefining},
and Fact \ref{th:polynomial}(\ref{head:direcsurp}),
we have
\begin{gather*}
 \rd_{\widevec{\alpha z_0}}\Crucial_{P^j}=\frac{1}{2}+\frac{1}{d^j-1}(0-0)=\frac{1}{2}>0
\end{gather*}
noting that $\xi'\mapsto\rho(\xi',P^j(\xi')\wedge_{\alpha}\xi')\equiv 0$
on $(\alpha,z_0]$ since $P^j(\alpha)=\alpha$ and 
$(P^j)_*\widevec{\alpha z_0}=\widevec{\alpha z_0}$ and that
$((P^j)^*\delta_{\alpha})(U(\widevec{\alpha z_0}))=s_{\widevec{\alpha z_0}}(P^j)=0$ since $P^j(\alpha)=\alpha\in\sP^1\setminus U(\widevec{\alpha z_0})$
and $\widevec{\alpha z_0}\neq\widevec{\alpha\infty}$.
Hence the $\Crucial_{P^j}$ does not take the minimum on $U(\widevec{\alpha z_0})$
by the convexity \eqref{eq:convex} of $\Crucial_{P^j}$, and we are done by \eqref{eq:samefinite}.
}
\end{proof}

Let us show the desired equality $[\xi_\infty,\eta_\infty]=\MinResLoc_{P^j}$ using Theorem \ref{thm:bary} and the notations there.
If the interval $[\xi_\infty,\eta_\infty]$ is non-trivial, then
so is $\BC(\nu_{P^j,\Gamma_n})=[\xi_n,\eta_n]$ for $n\gg 1$, 
and $\eta_n\equiv\eta_\infty$ for $n\gg 1$ and
$\lim_{n\to\infty}[\xi_n,\eta_n]=[\xi_\infty,\eta_\infty]$
increasingly. 
Then the inclusion
$[\xi_\infty,\eta_\infty]\subset\MinResLoc_{P^j}$ 
follows from the implication 
``(\ref{item:whole})$\Rightarrow$(\ref{item:minimum})'' in Lemma \ref{th:bcchar} 
and the continuity of $\Crucial_{P^j}$, and so does 
the reverse inclusion
$[\xi_\infty,\eta_\infty]\supset\MinResLoc_{P^j}$
from Lemma \ref{lem:MinResLoc-in-tree},
Fact \ref{th:inparticular}({\ref{item:exhaust}}), 
and the implication ``(\ref{item:whole})$\Leftarrow$(\ref{item:minimum})'' in Lemma \ref{th:bcchar}.

Alternatively if $[\xi_\infty,\eta_\infty]$ is trivial,
then so is $\BC(\nu_{P^j,\Gamma_n})=[\xi_n,\eta_n]$ 
for $n\ge 0$, and $\lim_{n\to\infty}\xi_n^{(j)}=\xi_\infty^{(j)}$
decreasingly in $(\Gamma_\infty,\prec)$
writing $[\xi_\infty,\eta_\infty]=\{\xi_{\infty}^{(j)}\}$
and $[\xi_n,\eta_n]=\{\xi_n^{(j)}\}$, $n\ge 0$.
Then 
$\MinResLoc_{P^j}$ is also trivial by 
Lemma \ref{lem:MinResLoc-in-tree},
Fact \ref{th:inparticular}({\ref{item:exhaust}}),
and the implication 
``(\ref{item:whole})$\Leftarrow$(\ref{item:minimum})''
in Lemma \ref{th:bcchar},
and writing $\MinResLoc_{P^j}=\{\xi_{\min}^{(j)}\}
\subset\Gamma_\infty\cap\sH^1$, we have
\begin{gather}
r_{\Gamma_\infty,\Gamma_n}(\xi_{\min}^{(j)})
=\xi_n^{(j)}\quad\text{for every }n\ge 0\label{eq:retractmin} 
\end{gather}
by \eqref{eq:samefinite} and the convexity \eqref{eq:convex}.
If $\xi_{\min}^{(j)}\in\Gamma_\infty\setminus\partial\Gamma_\infty$, then even $\xi_n^{(j)}\equiv\xi_{\min}^{(j)}$ for $n\gg 1$
by Fact \ref{th:inparticular}({\ref{item:exhaust}}).
Finally, if $\xi_{\min}^{(j)}\in\partial\Gamma_\infty$, then
even $\xi_{\min}^{(j)}\prec\xi_{n+1}^{(j)}\precneqq\xi_n^{(j)}$
for every $n\ge 0$, and moreover 
$\xi_{\infty}^{(j)}\in\partial\Gamma_\infty$
by Fact \ref{th:inparticular}({\ref{item:exhaust}}).
Hence $\xi_{\min}^{(j)}\prec\xi_{\infty}^{(j)}$, 
and in turn
$\xi_{\min}^{(j)}=\xi_{\infty}^{(j)}$ 
since both sides are in $\partial\Gamma_\infty$.
\qed

\subsection{Proof of Theorem \ref{thm:minres}(\ref{item:mrlstat}) in the non-simple case}\label{sec:barystat}
By Theorems \ref{thm:bary} and \ref{thm:minres}(\ref{item:bcmrl}),
for every $j\ge 1$ and every $n\ge 0$,
there are $\alpha_j,\beta_j\in\Gamma_\infty\cap\sH^1_{\mathrm{II}}$ and $\xi_n^{(j)},\eta_n^{(j)}\in\Gamma_n\cap\sH^1_{\mathrm{II}}$ such that
\begin{gather*}
 \MinResLoc_{P^j}
=[\alpha_j,\beta_j]\subset\sH^1\cap\{\cdot\prec\xi_P\}\cap
\Gamma_\infty\cap\sR_P,\\
\BC(\nu_{P^j,\Gamma_n})=[\xi_n^{(j)},\eta_n^{(j)}],\\
\alpha_j\prec\beta_j\prec\xi_P,\quad \alpha_j\prec\xi_n^{(j)}\prec\eta_n^{(j)}\prec\xi_P,
\quad\text{and}\\
\xi_n^{(j)},\eta_n^{(j)}\to
\alpha_j,\beta_j\text{ in }(\Gamma_\infty\cap\sH^1,\rho)\text{ as }n\to\infty,\text{ respectively}.
\end{gather*}
Recall again the characterization
of the (GIT)-semistability of a coefficient reduction 
of a rational function in terms of its depth
(see Subsection \ref{sec:res}).

\begin{lemma}\label{lem:deg}
For every $j\ge d-1$ and every $\xi\in[\alpha_j,\beta_j]$, 
we have
\begin{enumerate}[{\em (a)}]
\item \label{item:minresintree}
      $\xi\in\Gamma_{j-1}$,
so $[\alpha_j,\beta_j]\subset\Gamma_{j-1}$, 
\item \label{item:degreelower}
      $\deg_{\xi}(P^j)\ge(d^j+1)/2$, and 
\item \label{item:upper}
      for every $\vec{v}\in(T_{\xi}\Gamma_\infty)\setminus\{\widevec{\xi\infty}\}$, 
\begin{gather*}
 m_{P^j}(\vec{v})
\begin{cases}
=\displaystyle\frac{d^j+1}{2} & \text{if }
\vec{v}\in (T_{\xi}[\alpha_j,\beta_j])\setminus\{\widevec{\xi\infty}\},\\
\le\displaystyle\frac{
d^j-
\begin{cases}
  1 & \text{if }d\text{ is odd}\\
  0 & \text{if }d\text{ is even}
 \end{cases}
}{2}\le\frac{d^j}{2} & \text{otherwise}.
 \end{cases}\label{item:73}
\end{gather*}
\end{enumerate}
\end{lemma}

\begin{proof}
Suppose first $j=1=d-1$. Then
$\deg_\xi(P)\equiv 1(=d-1)$ on $\{\cdot\precneqq\xi_P\}\cap\Gamma_\infty$ by Fact \ref{th:inparticular}(\ref{item:nonmaximal}),
so $\{\cdot\prec\xi_P\}\cap\Gamma_\infty\cap\sR_P\subset\{\xi_P\}$.
The reverse inclusion
$\{\cdot\prec\xi_P\}\cap\Gamma_\infty\cap\sR_P\supset\{\xi_P\}$ 
is by Lemma \ref{th:basetoinfty}(\ref{item:totram}) and Fact \ref{th:inparticular}(\ref{item:exhaust}). 
Hence $[\alpha_1,\beta_1]=\{\xi_P\}=\sL_0\subset\Gamma_0$, so
the items (\ref{item:minresintree}) and (\ref{item:degreelower})
are the cases, and so is the item (\ref{item:upper}) also by
Fact \ref{th:inparticular}(\ref{item:nonmaximal}).
 
From now on, pick $j>1$ satisfying either 
$j\ge d-1>1$ or $j>1=d-1$.

\subsection*{Proof of (\ref{item:minresintree})}
Suppose to the contrary that 
$[\alpha_j,\beta_j]\setminus\Gamma_{j-1}\neq\emptyset$\yo{, and then
$\alpha_j\in\Gamma_\infty\setminus\Gamma_{j-1}$}. 
Then 
we have
\begin{gather}
\deg_{\yo{\alpha_j}}(P^j)=\prod_{\ell=0}^{j-1}\deg_{P^\ell(\yo{\alpha_j})}(P)
\le(d-1)^j\le\frac{d^j-1}{2}, \label{eq:degupper}
\end{gather}
where the first inequality holds by Fact \ref{th:inparticular}(\ref{item:nonmaximal}), and so does the second one under the above assumption on $d,j$.

\yo{We claim that 
\begin{gather*}
 P^j(\yo{\alpha_j})=\yo{\alpha_j}
\in\sJ_P(=\partial\Gamma_\infty\setminus\{\infty\}
\text{ by Fact \ref{th:inparticular}(\ref{item:exhaust})}),
\end{gather*}
which also yields $(P^j)_*(\widevec{\alpha_j\infty})=\widevec{\alpha_j\infty}$
by Fact \ref{th:polynomial}(\ref{head:direcsurp});
for, unless $\yo{\alpha_j}\in\sJ_P$,}
for $n\gg 1$, by Fact \ref{th:inparticular}({\ref{item:exhaust}}) and Lemma \ref{th:bcchar}, we must have \yo{$\xi_n^{(j)}\equiv\alpha_j$},
and then by Lemma \ref{lem:measure-computation} and \eqref{eq:degupper}, 
we compute
\begin{multline*}
\frac{1}{2}\ge\nu_{P^j,\Gamma_n}(U(\widevec{\yo{\alpha_j}\infty}))
=1-\nu_{P^j,\Gamma_n}(\{\cdot\prec\yo{\alpha_j}\})\\
 \ge 1-\frac{\deg_{\yo{\alpha_j}}(P^j)-1}{d^j-1}
=\frac{d^j-\deg_{\yo{\alpha_j}}(P^j)}{d^j-1}
\ge\frac{1}{2}\frac{d^j+1}{d^j-1},
\end{multline*}
which is a contradiction. \yo{Hence $\yo{\alpha_j}\in\sJ_P$. On the other hand,}
by \eqref{eq:same} and the convexity \eqref{eq:convex},
we have $\rd_{\widevec{\xi\yo{\alpha_j}}}\Crucial_{P^{d-1}}\le 0$
for every $\xi\in(\yo{\alpha_j},\infty)\subset\Gamma_\infty\setminus\partial\Gamma_\infty$,
so that for $n\gg 1$, using Proposition \ref{th:monotonicity}(\ref{item:closedopen}) and
the slope formula \eqref{eq:slope}, we have
\begin{gather*}
\nu_{P^j,\Gamma_n}(\{\cdot\prec\xi\})
 \ge
 \nu_{P^j,\Gamma_n}\bigl(U(\widevec{\xi\yo{\alpha_j}})\bigr)
=\frac{1}{2}-\rd_{\widevec{\xi\yo{\alpha_j}}}\Crucial_{P^j}
 \ge \frac{1}{2},
\end{gather*}
and in turn $\xi\prec P^j(\xi)$
by Lemma \ref{lem:measure-computation}.
Hence $\yo{\alpha_j}\prec P^j(\yo{\alpha_j})$ by the continuity of $P^j$,
and in turn $P^j(\yo{\alpha_j})=\yo{\alpha_j}$ since 
$P^j(\yo{\alpha_j})\in P^j(\sJ_P)=\sJ_P$ and 
$(\yo{\alpha_j},\infty]\subset\sfO_P(\infty)$.

\yo{Once the claim is at our disposal, by the difference formula \eqref{eq:difference}, Fact \ref{th:local}, 
the defining equality \eqref{eq:surplusdefining},
and Fact \ref{th:polynomial}(\ref{head:direcsurp}),
we must have
\begin{gather*}
 0\le\rd_{\widevec{\alpha_j\infty}}\Crucial_{P^j}
=\frac{1}{2}+\frac{1}{d^j-1}(0-(d^j-\deg_{\alpha_j}(P^j)))\le\frac{1}{2}\Bigl(1-\frac{d^j+1}{d^j-1}\Bigr)<0
\end{gather*}
noting that $\xi\mapsto(\xi,P^j(\xi)\wedge_{\alpha_j}\xi)\equiv 0$
on $(\alpha_j,\infty]$ since $P^j(\alpha_j)=\alpha_j$ and 
$(P^j)_*\widevec{\alpha_j\infty}=\widevec{\alpha_j\infty}$ and that
$((P^j)^*\delta_{\alpha_j})(U(\widevec{\alpha_j\infty}))
=s_{\widevec{\alpha_j\infty}}(P^j)+0
(=s_{\widevec{\alpha_j\infty}}(P^j)=d^j-\deg_{\alpha_j}(P^j))$
since $\alpha_j=P^j(\alpha_j)\in\sP^1\setminus U((P^j)_*(\widevec{\alpha_j\infty}))$.
This is a contradiction. \qed
}

\subsection*{Proofs of (\ref{item:degreelower}) and (\ref{item:upper})} 
Pick $\xi\in[\alpha_j,\beta_j]\subset\{\cdot\prec\xi_P\}\cap\Gamma_{j-1}\subset\Gamma_j$ (by the item (\ref{item:minresintree})
and Fact \ref{th:inparticular}(\ref{item:endext})), so that
$\xi\prec P^j(\xi)$. First, we compute
\begin{align*}
\deg_{\xi}(P^j)
&=(d^j-1)\cdot
\nu_{P^j,\Gamma_{j-1}}(\{\cdot\prec\xi\})+1
\quad\text{(by Lemma \ref{lem:measure-computation})}\\
&=(d^j-1)\cdot\Bigl(1-\nu_{P^j,\Gamma_{j-1}}(U(\widevec{\xi\infty}))\Bigr)+1\\
&=(d^j-1)\Bigl(1-\bigl(\frac{1}{2}-\rd_{\widevec{\xi\infty}}\Crucial_{P^j}\bigr)\Bigr)+1
\quad\text{(by the slope formula \eqref{eq:slope})}\\
&\ge(d^j-1)\Bigl(\frac{1}{2}+0\Bigr)+1=\frac{d^j+1}{2}
\quad(\text{by \eqref{eq:same}}),
\end{align*}
which completes the proof of the item (\ref{item:degreelower}).

Next, for every 
$\vec{v}\in T_{\xi}\Gamma_\infty\setminus\{\widevec{\xi\infty}\}
(=T_{\xi}\Gamma_j\setminus\{\widevec{\xi\infty}\}$
by Fact \ref{th:inparticular}(\ref{item:exhaust}))
and every $\xi'\in U(\vec{v})$ close enough to $\xi$, 
we still have $\xi'\prec P^j(\xi')$ and compute
\begin{align*}
\frac{1}{2}+\rd_{\widevec{\xi'\xi}}\Crucial_{P^j} 
&=1-\nu_{P^j,\Gamma_j}(U(\widevec{\xi'\xi}))
\quad\text{(by the slope formula \eqref{eq:slope})}\\
&=\nu_{P^j,\Gamma_j}(\{\cdot\prec\xi'\})
=\frac{\deg_{\xi'}(P^j)-1}{d^j-1}\quad\text{(by Lemma \ref{lem:measure-computation})}\\
&=\frac{m_{P^j}(\widevec{\xi'\xi})-1}{d^j-1}
=\frac{m_{P^j}(\vec{v})-1}{d^j-1}
\quad\text{(by Fact \ref{th:polynomial}(\ref{item:locdegdirect})\text{ and \eqref{eq:multiplicity})}},
\end{align*}
so that $m_{P^j}(\vec{v})= 
(d^j+1)/2+(d^j-1)\cdot\rd_{\widevec{\xi'\xi}}\Crucial_{P^j}$.
On the other hand, 
$\rd_{\widevec{\xi'\xi}}\Crucial_{P^j}\le 0$
(by \eqref{eq:same}), where
the equality holds if 
$\vec{v}\in(T_\xi[\alpha_j,\beta_j])\setminus\{\widevec{\xi\infty}\}$
and otherwise,
\begin{gather*}
(d^j-1)\cdot\rd_{\widevec{\xi'\xi}}\Crucial_{P^j}\le
\begin{cases}
 -1 & \text{if }d\text{ is odd}\\
 -1/2 & \text{if }d\text{ is even}
\end{cases}
\end{gather*}
(by the range \eqref{eq:sloperange} of the slopes of $\Crucial_{P^j}$
and the piecewise affineness of $\Crucial_\phi$ on $(\sH^1,\rho)$). Now the proof of the item (\ref{item:upper}) is also complete.
\end{proof} 

For every $j\ge 1$ and every $n\ge j$, 
by Fact \ref{th:inparticular}(\ref{item:endext}), 
Lemma \ref{lem:deg}(\ref{item:minresintree}), and
Theorem \ref{thm:bary}(\ref{item:stationary}), 
we have
\begin{gather}
[\xi_n^{(j)},\eta_n^{(j)}]\equiv[\alpha_j,\beta_j]
\subset\{\cdot\prec\xi_P\}\cap\Gamma_{j-1},\label{eq:barymin}
\end{gather}
which is in fact the case for every $n\ge j-1$
also by \eqref{eq:samefinite}. 

Let us see by step by step the desired
\begin{gather}
 \alpha_j\equiv\alpha_{d-1}=\beta_{d-1}\equiv\beta_j
\quad\text{for every }j\ge d-1.\label{eq:minreslocstationary}
\end{gather}

\begin{mclaim}\label{th:leftendequal}
\yo{$\alpha_j=\alpha_{d-1}$ for every $j\ge d-1$.}
\end{mclaim}

\begin{proof}
Pick $j\ge d-1$.
Then by \eqref{eq:barymin} applied to $(j=)d-1$ 
and Lemma \ref{th:basetoinfty}(\ref{item:diverge})
under the non-simpleness assumption, we have
\begin{gather*}
 \alpha_{d-1}\prec\xi_P\prec P^{d-2}(\alpha_{d-1})\precneqq P^j(\alpha_{d-1}),
\end{gather*}
\yo{so that $(P^j)_*(\widevec{\alpha_{d-1}\infty})=\widevec{(P^j(\alpha_{d-1}))\infty}\neq\widevec{(P^j(\alpha_{d-1}))\alpha_{d-1}}$ so
$\alpha_{d-1}\in\sP^1\setminus U((P^j)_*(\widevec{\alpha_{d-1}\infty}))$
by Fact \ref{th:polynomial}(\ref{head:direcsurp}) and that}
\begin{align*}
 s_{P^j}(\widevec{\alpha_{d-1}\infty}) 
 &=d^j-\deg_{\alpha_{d-1}}(P^j)\quad(\text{by Fact \ref{th:polynomial}(\ref{head:direcsurp})})\\
 &=d^j-
\deg_{\alpha_{d-1}}(P^{d-1})\cdot\prod_{\ell=0}^{j-d+1}\deg_{P^{d-1+\ell}(\alpha_{d-1})}(P)\\
&\le d^j-\frac{d^{d-1}+1}{2}\cdot d^{j-d+1}\\
&=\yo{\frac{d^j-d^{j-d+1}}{2}\le\frac{d^j-1}{2}}
\quad(\text{using Lemma \ref{lem:deg}\eqref{item:degreelower} and Lemma \ref{th:basetoinfty}(\ref{item:diverge})(\ref{item:totram}})).
\end{align*}
\yo{Hence by the difference formula \eqref{eq:difference},
Fact \ref{th:local}, and the defining equality \eqref{eq:surplusdefining},
we have the first positivity
\begin{gather*}
 \rd_{\widevec{\alpha_{d-1}\infty}}\Crucial_{P^j}
=\frac{1}{2}+\frac{1}{d^j-1}\bigl(0-((P^j)^*\delta_{\alpha_{d-1}})(U(\widevec{\alpha_{d-1}\infty}))\bigr)\ge 0
\end{gather*}
noting that $\xi\mapsto(\xi,P^j(\xi)\wedge_{\alpha_{d-1}}\xi)\equiv 0$
on $(\alpha_{d-1},\infty]$ since $P^j(\alpha_{d-1})\neq\alpha_{d-1}$ and 
$\widevec{\alpha_{d-1}(P^j(\alpha_{d-1}))}=\widevec{\alpha_{d-1}\infty}$ 
and that
$((P^j)^*\delta_{\alpha_{d-1}})(U(\widevec{\alpha_{d-1}\infty}))
=s_{P^j}(U(\widevec{\alpha_{d-1}\infty}))+0(\le(d^j-1)/2)$ 
since $\alpha_{d-1}\in\sP^1\setminus U((P^j)_*(\widevec{\alpha_{d-1}\infty}))$.} 

\yo{On the other hand,} for every $\vec{v}\in T_{\alpha_{d-1}}\sP^1\setminus\{\widevec{\alpha_{d-1}\infty}\}$,
we also have 
$\widevec{\alpha_{d-1}(P^j(\alpha_{d-1}))}=\widevec{\alpha_{d-1}\infty}\neq\vec{v}$ and 
\begin{gather*}
  m_{P^j}(\vec{v})
 =m_{P^{d-1}}(\vec{v})\cdot  m_{P^{j-d+1}}\bigl((P^{d-1})_*\vec{v}\bigr)
 \le\frac{d^{d-1}}{2}\cdot d^{j-d+1}=\frac{d^j}{2}
\end{gather*}
(by Lemma \ref{lem:deg}\eqref{item:73} \yo{applied to $P^{d-1}$}),
\yo{so that by the difference formula \eqref{eq:difference},
Fact \ref{th:local}, the defining equality \eqref{eq:surplusdefining},
and Fact \ref{th:polynomial}(\ref{head:direcsurp}),
we have the second positivity
\begin{gather*}
 \rd_{\vec{v}}\Crucial_{P^j}
=\frac{1}{2}+\frac{1}{d^j-1}\bigl(1-((P^j)^*\delta_{\alpha_{d-1}})(U(\vec{v}))\bigr)\ge\frac{1}{2}\Bigl(1-\frac{d^j-2}{d^j-1}\Bigr)>0
\end{gather*}
noting that $\xi\mapsto\rho(\xi,P^j(\xi)\wedge_{\alpha_{d-1}}\xi)=\rho(\alpha_{d-1},\xi)$ on $U(\vec{v})$ so $\rd_{\vec{v}}(\xi\mapsto\rho(\xi,P^j(\xi)\wedge_{\alpha_{d-1}}\xi))=1$
since $P^j(\alpha_{d-1})\neq\alpha_{d-1}$ and 
$\widevec{\alpha_{d-1}(P^j(\alpha_{d-1}))}\neq\vec{v}$ 
and that $((P^j)^*\delta_{\alpha_{d-1}})(U(\vec{v}))
\le s_{P^j}(\vec{v})+m_{P^j}(\vec{v})=0+m_{P^j}(\vec{v})(\le d^j/2)$. Hence we have 
$\alpha_{d-1}\in[\alpha_j,\beta_j]$ by the convexity \eqref{eq:convex} of $\Crucial_{P^j}$,
and in turn $\alpha_{d-1}=\alpha_j$ since the second positivity is strict.}
\end{proof}

\begin{mclaim}
$\alpha_{d-1}=\beta_{d-1}$. 
\end{mclaim}

\begin{proof}
Suppose to the contrary that $\alpha_{d-1}\precneqq\beta_{d-1}(\prec\xi_P)$.
Since $\alpha_{d-1}\in[\alpha_{d-1},\beta_{d-1}]\subset\{\cdot\prec\xi_P\}\cap\Gamma_{d-2}$ 
(by \ref{eq:barymin}), we have both
$\xi_P\prec P^{d-2}(\alpha_{d-1})\precneqq P^{d-2}(\beta_{d-1})$ and $(P^{d-2})_*\widevec{\beta_{d-1}\alpha_{d-1}}=\widevec{P^{d-2}(\beta_{d-1})P^{d-2}(\alpha_{d-1})}$
(using Fact \ref{th:polynomial}(\ref{item:ordpres})).
Hence we must have
\begin{align*}
\frac{d^{d-1}+1}{2}
&=m_{P^{d-1}}\bigl(\widevec{\beta_{d-1}\alpha_{d-1}}\bigr)\quad(\text{by Lemma \ref{lem:deg}(\ref{item:upper}) applied to $\beta_d$ and $\widevec{\beta_{d-1}\alpha_{d-1}}$})\\
&=m_{P^{d-2}}\bigl(\widevec{\beta_{d-1}\alpha_{d-1}}\bigr)\cdot m_P\bigl((P^{d-2})_*\widevec{\beta_{d-1}\alpha_{d-1}}\bigr)\\
&=m_{P^{d-2}}\bigl(\widevec{\beta_{d-1}\alpha_{d-1}}\bigr)\cdot d
\quad(\text{by Lemma \ref{th:basetoinfty}(\ref{item:directional}))}\\
&\ge m_{P^{d-2}}\bigl(\widevec{\alpha_{d-1}\beta_{d-1}}\bigr)\cdot d
=m_{P^{d-2}}\bigl(\widevec{\alpha_{d-1}\infty}\bigr)\cdot d
\quad(\text{by Fact \ref{th:polynomial}(\ref{head:locdegdirec})})\\
&=\deg_{\alpha_{d-1}}(P^{d-2})\cdot d
\quad(\text{by Fact \ref{th:polynomial}(\ref{head:direcsurp})})\\
&=\deg_{\alpha_{d-1}}(P^{d-2})\cdot\deg_{P^{d-2}(\alpha_{d-1})}P=\deg_{\alpha_{d-1}}\bigl(P^{d-1}\bigr)
\quad(\text{by Lemma \ref{th:basetoinfty}(\ref{item:totram})})\\
&\ge\frac{d^{d-1}+1}{2}\quad(\text{by Lemma \ref{lem:deg}(\ref{item:degreelower}) applied to }\alpha_{d-1}\in[\alpha_{d-1},\beta_{d-1}]),
\end{align*}
which yields
$\deg_{\alpha_{d-1}}(P^{d-2})=(d^{d-1}+1)/(2d)\not\in\{1,2,\ldots,d^{d-2}\}$.
This is a contradiction.
\end{proof}

The following final claim will complete the proof of 
\yo{\eqref{eq:minreslocstationary}}.
\begin{mclaim}
$\beta_j\prec\beta_{d-1}$ for every $j\ge d-1$.
\end{mclaim}

\begin{proof}
This is clear when $j=d-1$. Pick $j\ge d-1$.
Since $\beta_j\in[\alpha_j,\beta_j]\subset\{\cdot\prec\xi_P\}\cap\Gamma_{j-1}
\subset\Gamma_j\subset\Gamma_{j+1}$ (by \ref{eq:barymin} and Fact \ref{th:inparticular}(\ref{item:endext})), we have
$\{P^j(\beta_j),P^{j+1}(\beta_j)\}\subset
[\xi_P,\infty]\subset[\beta_j,\infty]$, so that
\begin{gather*}
 \beta_j\prec P^j(\beta_j)\quad\text{and}\quad\beta_j\prec P^{j+1}(\beta_j).
\end{gather*} 
Noting that $\widevec{\beta_{j}\infty}\in T_{\beta_{j}}\Gamma_{j-1}$, we have $\rd_{\widevec{\beta_j\infty}}\Crucial_{P^j}>0$ 
by \eqref{eq:same} (and the piecewise affineness of $\Crucial_\phi$ on $(\sH^1,\rho)$),
and in turn have
\begin{gather*}
 \nu_{P^j,\Gamma_{j-1}}\bigl(U(\widevec{\beta_j\infty})\bigr)=\frac{1}{2}-\rd_{\widevec{\beta_j\infty}}\Crucial_{P^j}<\frac{1}{2}
\end{gather*}
also by the slope formula \eqref{eq:slope}. 
Hence also by Lemma \ref{lem:measure-computation}
applied to $P^j$ at $\beta_j$, we have 
\begin{gather*}
 \frac{1}{2}
 <1-\nu_{P^j,\Gamma_{j-1}}\bigl(U(\widevec{\beta_j\infty})\bigr)
 =
 \nu_{P^j,\Gamma_{j-1}}(\{\cdot\prec\beta_j\})
 =\frac{\deg_{\beta_j}(P^j)-1}{d^j-1},
\end{gather*}
so that $\deg_{\beta_j}(P^j)>(d^j+1)/2$, and in turn
noting that $\deg_{P^j(\beta_j)}(P)=d$ 
(by Lemma \ref{th:basetoinfty}(\ref{item:totram})), 
\begin{gather*}
 \deg_{\beta_j}\bigl(P^{j+1}\bigr)=\deg_{\beta_j}\bigl(P^j\bigr)\cdot\deg_{P^j(\beta_j)}(P)
 >\frac{d^j+1}{2}\cdot d. 
\end{gather*}

Consequently, by the slope formula \eqref{eq:slope}
and Lemma \ref{lem:measure-computation}
applied to $P^{j+1}$ at $\beta_j$, we have
 \begin{multline*}
 \rd_{\widevec{\beta_j\infty}}\Crucial_{P^{j+1}}
 =\frac{1}{2}-\nu_{P^{j+1},\Gamma_{j-1}}\bigl(U(\widevec{\beta_j\infty})\bigr)=-\frac{1}{2}+
 \nu_{P^{j+1},\Gamma_{j-1}}(\{\cdot\prec\beta_j\})\\
 =-\frac{1}{2}+\frac{\deg_{\beta_j}(P^{j+1})-1}{d^{j+1}-1}
 >\Bigl(\frac{1}{2}-1\Bigr)+\frac{(d^j+1)d-2}{2(d^{j+1}-1)}
 =\frac{1}{2}-\frac{d^{j+1}-d}{2(d^{j+1}-1)}>0, 
 \end{multline*}
so that $\beta_j\not\in[\alpha_{j+1},\beta_{j+1})$ by \eqref{eq:same},
which together with $\alpha_{j+1}\prec\beta_{j+1}$ and
$\alpha_{j+1}=\alpha_j(=\alpha_{d-1})\prec\beta_j$ yields 
$\beta_{j+1}\prec\beta_j$. Now we are done by induction.
\end{proof}

\begin{remark}
 For completeness, let us also see that
 {\em if either {\rm (a)} $j\ge d-1$ or {\rm (b)} $d$ is even and $j\ge 1$, then 
 for every $n\ge 0$, 
 $\BC(\nu_{P^j,\Gamma_n})=r_{\sP^1,\Gamma_n}(\MinResLoc_{P^{d-1}})$,
 any side in which is a singleton}; indeed, pick $j\ge 1$. 
 If $d$ is even, then so is the degree $d^j$ 
 of $P^j$, and then $\MinResLoc_{P^j}$ is a singleton 
 by \eqref{eq:same} and \eqref{eq:sloperange}.
 Hence  if either the above conditions (a) or (b) is the case,
 then also by Theorem \ref{thm:minres}(\ref{item:mrlstat}), 
 $\MinResLoc_{P^j}$ is a singleton $\{\xi_{\min}^{(j)}\}$, and for every $n\ge 0$,
 by \eqref{eq:samefinite} and the convexity \eqref{eq:convex},
 $\BC_{\Gamma_n}(\nu_{P^j,\Gamma_n})$
 is the singleton $\{r_{\sP^1,\Gamma_n}(\xi_{\min}^{(j)})\}$.\qed
\end{remark}

\subsection{Examples of the minimal resultant loci}\label{sec:bestpossible}
When $d=2$, the identity
$\BC(\nu_{P^j,\Gamma_n})=\MinResLoc_{P^j}\equiv\{\xi_P\}$
for $j\ge 1=d-1$ and $n\ge 0=d-2$ holds by 
Theorem \ref{thm:minres}(\ref{item:mrlstat})
(for the second identity, see the beginning of the
proof of Lemma \ref{lem:deg}).
In the following,
we also give some concrete $P\in K[z]$ of degree $d\ge 3$, which shows that
\eqref{eq:stationaryminres} is best possible.

Pick an integer $d\ge 3$ and a prime number $p>d$, and let
$|\cdot|=|\cdot|_p$ denote the $p$-adic absolute value on $K=\bC_p$. 
Set
\begin{gather*}
 P(z)=(d-1)pz^d-dz^{d-1},
\end{gather*}
so that $P'(z)=d(d-1)pz^{d-2}\bigl(z-1/p\bigr)$
and
$\crit_P=\{0,1/p,\infty\}$. 
Then $P$ is tame by \cite[Corollary 6.6]{Faber13I}.
Noting that
\begin{gather*}
|P(z)|=|p||z|^{d-1}\text{ on }|z|>|1/p|(>1),\quad|P(1/p)|=|1/p|^{d-1},\\
|P(z)|=|z|^{d-1}\text{ on }|z|\le 1,\quad\text{and}\quad|P(z)|=|1/p|\text{ on }|z|=|1/p|^{1/(d-1)},
\end{gather*}
we have
\begin{itemize}
 \item $\sJ_P\subset\{\cdot\prec\xi_{B(0,|1/p|)}\}$,
and indeed $\xi_P=\xi_{B(0,|1/p|)}$ (also by a Riemann-Hurwitz-type formula (Fact \ref{th:RH}) and Lemma \ref{th:basetoinfty}(\ref{item:totram})),
 \item $1/p\in\sfO_P(\infty)$, $\xi_P\in\Gamma_\infty\setminus\partial\Gamma_\infty$
(since $0=P(0)\precneqq\xi_{B(0,|1/p|)}=\xi_P\precneqq\xi_{B(0,|1/p|^{d-1})}=P(\xi_P)$),
and $r_{\sP^1,\Gamma_\infty}(1/p)=\xi_P$,
\yo{and $P$ restricts to a self-homeomorphism of $[0,\infty]$ preserving $\prec$
and increasing $\rho$ (by Fact \ref{th:polynomial}(\ref{head:monotone})),}
 \item 
$\sfO_P(0)=U(\widevec{\xi_g0})$ and $P(\xi_g)=\xi_g\in\sJ_P$, where
$\sfO_P(0)$ denotes the (Berkovich) immediate basin of $P$
associated to the superattractive fixed point $0$, 
\yo{and $\xi\precneqq P(\xi)$ for every $\xi\in(\xi_g,\infty)$, and}
 \item $\xi_\star:=\xi_{B(0,|1/p|^{1/(d-1)})}\in(\xi_g,\xi_P)\cap\sL_1$,
\end{itemize}
and note that
for every $\xi\in(0,\xi_P)$,
\begin{gather*}
 \deg_0(P^j)=m_P(\widevec{0\infty})=m_P(\widevec{\xi0})
=m_P(\widevec{\xi\infty})=\deg_\xi(P)\equiv d-1 
\end{gather*}
(by Facts \ref{th:polynomial}(\ref{head:locdegdirec})
and \ref{th:inparticular}(\ref{item:nonmaximal})).

When $d=3$, using the above observation, let us see
\begin{gather*}
\MinResLoc_{P^j}=
\begin{cases}
 [\xi_g,\xi_P] &\text{for }j=1,\\
 \{\xi_\star\}(\subset(\xi_g,\xi_P)) &\text{for any }j\ge 2(=d-1); 
\end{cases}
\end{gather*}
indeed, when $j=1$, for every $\xi\in(\yo{\xi_g},\xi_P)$, we compute
\begin{gather*}
 s_P(\widevec{\xi\infty})
=d-m_P(\widevec{\xi\infty})
=d-(d-1)=1\Bigl(<\frac{d}{2}\quad\text{if }d\ge 3\Bigr)\quad
\text{and}\\
m_P(\vec{v})+s_P(\vec{v})
\le(d-1)+0=d-1\Bigl(\le\frac{d+1}{2}\quad\text{if }d=3\Bigr)
\quad\text{for any }\vec{v}\in T_\xi\sP^1\setminus\{\widevec{\xi\infty}\} 
\end{gather*}
also by Facts \ref{th:polynomial}(\ref{head:direcsurp}) and \ref{th:inparticular}(\ref{item:nonmaximal}). Hence 
\yo{by a computation
similar to that in the proof of Claim \ref{th:leftendequal} in Subsection \ref{sec:barystat},
we have both $\rd_{\widevec{\xi\infty}}\Crucial_P=0$ and 
$\rd_{\vec{v}}\Crucial_P\ge 0$ for any 
$\vec{v}\in T_\xi\sP^1\setminus\{\widevec{\xi\infty}\}$.
Then} $(\xi_g,\xi_P)\subset\MinResLoc_P$ \yo{by the convexity \eqref{eq:convex}},
and in turn $[\xi_g,\xi_P]\yo{=}\MinResLoc_P$
by Theorem \ref{thm:minres}(\ref{item:bcmrl}) and
$\xi_g\in\sJ_P=\partial\Gamma_\infty\setminus\{\infty\}$
in this case. Similarly,
when $j\ge 2$, we compute not only
\begin{multline*}
s_{P^j}\bigl(\widevec{\xi_\star\infty}\bigr)
=d^j-m_{P^j}\bigl(\widevec{\xi_\star\infty}\bigr)
=d^j-m_P\bigl(\widevec{\xi_\star\infty}\bigr)\cdot m_{P^{j-1}}\bigl(\widevec{\xi_P\infty}\bigr)\\
=d^j-(d-1)d^{j-1}=d^{j-1}<\frac{d^j}{2} 
\end{multline*}
(by Fact \ref{th:polynomial}(\ref{head:direcsurp})
and Lemma \ref{th:basetoinfty}(\ref{item:directional})) but also 
\begin{gather*}
 m_{P^j}(\vec{v})+s_{P^j}(\vec{v})
=m_{P^j}(\vec{v})
=m_P(\vec{v})\cdot m_P(P_*\vec{v})
\cdot m_{P^{j-2}}\bigl((P^2)_*\vec{v}\bigr)\le
(d-1)^2d^{j-2}\le\frac{d^j+1}{2}
\end{gather*}
for any $\vec{v}\in(T_{\xi_\star}\sP^1)\setminus\{\widevec{\xi_\star\infty}\}$ (by Facts \ref{th:polynomial}(\ref{head:direcsurp}) and \ref{th:inparticular}(\ref{item:nonmaximal})). \yo{By a computation
similar to that in the proof of Claim \ref{th:leftendequal} in Subsection \ref{sec:barystat}, we have both 
\begin{gather*}
 \rd_{\widevec{\xi_\star\infty}}\Crucial_P
=\frac{1}{2}\Bigl(1-\frac{2d^{j-1}}{d^j-1}\Bigr)
\ge\frac{1}{2}\Bigl(1-\frac{d^j-2}{d^j-1}\Bigr)>0
\end{gather*}
and $\rd_{\vec{v}}\Crucial_P\ge 0$ for any 
$\vec{v}\in T_\xi\sP^1\setminus\{\widevec{\xi\infty}\}$.
Then} $\xi_\star\in\MinResLoc_{P^j}$ \yo{by the convexity \eqref{eq:convex}},
and in turn $\{\xi_\star\}=\MinResLoc_{P^j}$
by Theorem \ref{thm:minres}(\ref{item:mrlstat}) in this case.

An argument similar to that above also yields the equality
\begin{gather*}
\MinResLoc_{P^j}
=
\begin{cases}
 \{\xi_g\} \text{\ \ if }d\in\{4,5\}\text{ and }1\le j\le d-2\text{ or if }d\in\{6, 7, 8\}\text{ and }1\le j\le d-3,\\ 
 \{\xi_\star\} \text{\ \ if }d\in\{4,5\}\text{ and }j\ge d-1\text{ or if }d\in\{6, 7, 8\}\text{ and }j\ge d-2.
\end{cases}
\end{gather*}

\section{Proof of Theorem \ref{thm:equi}}\label{sec:pf-equi}
Let $P\in K[z]$ be a polynomial of degree $d>1$;
the $\sL_n$, $\Gamma_n$, 
and $\Gamma_\infty$ are defined using $P$.
For every $n\ge 0$, set
\begin{gather}
\nu_n:=\sum_{\xi\in \sL_n}\delta_\xi\quad\text{on }\sP^1
\quad\text{and}\quad
N_n:=\nu_n(\sP^1)=\#\sL_n\in\{1,\ldots,d^n\},\label{eq:nomult}
\end{gather}
where the sum in the definition of $\nu_n$ 
does {\em not} take into account the local degrees 
of $P^n$ at each $\xi\in\sL_n=P^{-n}(\xi_P)$.
As seen in Theorem \ref{th:convergence} in Appendix, 
the probability measure $\nu_n/N_n$ tends to $\mu_P$ 
weakly on $\sP^1$ as $n\to\infty$, that will be used
in the proof of Theorem \ref{thm:equi}. 

\begin{proof}[Proof of Theorem \ref{thm:equi} in the simple case]
 When $P$ is simple, Theorem \ref{thm:equi} holds since
 for every $j\ge 1$ and every $n\ge 0$,
 we have already seen that 
 $\overline{\nu^+_{P^j,\Gamma_n}}=\overline{|\nu_{P^j,\Gamma_n}|}
 =\nu_{P^j,\Gamma_n}\equiv\delta_{\xi_P}$ on $\sP^1$
 in Section \ref{sec:support}, and
 for every $j\ge 1$ and every $n\ge 0$ 
 $\mu_{P^j}\equiv\delta_{\xi_P}$ on $\sP^1$
 for every $j\ge 1$ (see Subsection \ref{sec:simpleness}).
\end{proof}

In the rest of this section,
suppose that $P$ is non-simple and tame.
Then for every $j\ge 1$, $P^j$ is also tame,
so that $\sR_{P^j}=\Span(\crit_{P^j})$,
and then for every $n\ge 0$,
\begin{gather}
 \#\bigl((\partial\Gamma_n\setminus\{\infty\})\cap\sR_{P^j}\bigr)
\le\#\bigl(\crit_{P^j}\setminus\{\infty\}\bigr)\le d^j-1. \label{eq:bounddegree}
\end{gather}
For any $n\ge j\ge 1$, we note that
$\nu_{P^j,\Gamma_n}(\Gamma_n)=1$, that
$\supp(\nu_{P^j,\Gamma_n})=V(\Gamma_n)\setminus(\sZ_{P^j,\Gamma_n}\sqcup\{\infty\})$,
and that 
$\emptyset\neq\supp(\nu^-_{P^j,\Gamma_n})\subset\partial\Gamma_n\setminus\{\infty\}$
(see Theorem \ref{thm:support}(\ref{head:weight}) and
the definition \eqref{eq:Z} of $\sZ_{P^j,\Gamma_n}$).

Pick $j\ge 1$. We begin by making some reductions in the proof of 
Theorem \ref{thm:equi} under the non-simple and tame 
assumption. 
By Theorem \ref{thm:support}(\ref{head:weight})(\ref{head:zero}), 
we have the comparison
\begin{gather}
0\le\nu_n-(d^j-1)\cdot\nu^-_{P^j,\Gamma_n}
\le\#\bigl(\sZ_{P^j,\Gamma_n}\sqcup((\partial\Gamma_n
\setminus\{\infty\})\cap\sR_{P^j})\bigr)=O(1)
\quad\text{on }\Gamma_n\label{eq:comparison}
\end{gather}
between the positive measures as $n\to\infty$, so that
\begin{gather*}
\nu^-_{P^j,\Gamma_n}(\Gamma_n)=\frac{N_n}{d^j-1}+O(1)
\end{gather*}
as $n\to\infty$, and in turn have both
\begin{align}
\notag\nu^+_{P^j,\Gamma_n}(\Gamma_n)=&
1+\nu^-_{P^j,\Gamma_n}(\Gamma_n)
=\frac{N_n}{d^j-1}+O(1)\quad\text{and}\\
|\nu_{P^j,\Gamma_n}|(\Gamma_n)=&
\bigl(\nu^-_{P^j,\Gamma_n}+\nu^+_{P^j,\Gamma_n}\bigr)(\Gamma_n)
=2\cdot\frac{N_n}{d^j-1}+O(1)
\quad\text{as }n\to\infty. 
\label{eq:total-num}
\end{align}
In particular,
for every continuous function $f$ on $\sP^1$, we have
\begin{gather*}
\Bigl|\int_{\sP^1}f\ \Bigl(\overline{\nu^-_{P^j,\Gamma_n}}-\frac{\nu_n}{N_n}\Bigr)\Bigr|\le\bigl(\sup_{\sP^1}|f|\bigr)\cdot O(N_n^{-1})\quad\text{and}\\
\Bigl|\int_{\sP^1}f\ \Bigl(2\cdot\overline{|\nu_{P^j,\Gamma_n}|}
-\Bigl(\overline{\nu^+_{P^j,\Gamma_n}}+\overline{\nu^-_{P^j,\Gamma_n}}\Bigr)\Bigr)\Bigr|
\le \bigl(\sup_{\sP^1}|f|\bigr)\cdot O(N_n^{-1})\quad\text{as }n\to\infty, 
\end{gather*} 
so that also by the second convergence assertion in Theorem \ref{th:convergence}, one desired convergence
\begin{gather}
 \lim_{n\to\infty}\overline{|\nu_{P^j,\Gamma_n}|}=\mu_{P}\quad\text{weakly on }\sP^1\label{eq:reduced} 
\end{gather}
will conclude the others.

Let us see \eqref{eq:reduced}.
Fix a $C^1$-test function $f$, so that $f=(r_{\sP^1,\Gamma})^*(f|\Gamma)$ on $\sP^1$
for some finite subtree $\Gamma\subset\sP^1$.
 
 {\bfseries Case 1.} Suppose $\Gamma\cap\Gamma_\infty=\emptyset$.
Then by Fact \ref{th:inparticular}(\ref{item:exhaust}),
there is a point $\xi_\Gamma\in\Gamma$ such that
for every $n\ge 0$, 
$r_{\sP^1,\Gamma}(\Gamma_n)=r_{\sP^1,\Gamma}(\Gamma_\infty)=\{\xi_\Gamma\}$.
Hence for every $n\ge 0$,
 \begin{gather*}
 \int_{\sP^1}f\ \overline{|\nu_{P^j,\Gamma_n}|}
 =\int_{\{\xi_{\Gamma}\}}f\delta_{\xi_{\Gamma}}=f(\xi_{\Gamma}),
 \end{gather*}
and similarly $\int_{\sP^1}f\mu_P=f(\xi_{\Gamma})$
(also noting that $\supp\mu_P=:\sJ_P=(\partial\Gamma_\infty)\setminus\{\infty\}$),
so we are done in this case.

{\bfseries Case 2.} Suppose
both $\Gamma\cap\Gamma_\infty\neq\emptyset$ and $\Gamma\subset\sfO_P(\infty)$. 
 Then by Fact \ref{th:inparticular}(\ref{item:exhaust}), there is 
 $s\gg 1$ so large that for every $n\ge s$, 
 \begin{gather*}
 r_{\sP^1,\Gamma}(\Gamma_\infty)=r_{\sP^1,\Gamma}(\Gamma_n)=\Gamma\cap\Gamma_n=\Gamma\cap\Gamma_\infty\subset\Gamma_s\setminus
(\partial\Gamma_s\setminus\{\infty\}). 
 \end{gather*}

\begin{claim}\label{claim:test}
Both
\begin{multline*}
 \int_{\sP^1}f\overline{|\nu_{P^j,\Gamma_n}|}
 = \sum_{\eta\in\Gamma\cap\Gamma_s}
 f(\eta)\biggl(\sum_{\xi\in\partial\Gamma_s\setminus\{\infty\}\text{ such that }
 r_{\Gamma_s,\Gamma\cap\Gamma_s}(\xi)=\eta}
 \frac{\nu_n}{N_n}(\{\cdot\prec\xi\})\biggr)
+O(N_n^{-1})\\
\quad\text{as }n\to\infty
\end{multline*}
and
\begin{gather*}
 \int_{\sP^1}f\mu_P
 =\sum_{\eta\in\Gamma\cap\Gamma_s}
 f(\eta)\biggl(\sum_{\xi\in\partial\Gamma_s\setminus\{\infty\}\text{ such that }r_{\Gamma_s,\Gamma\cap\Gamma_s}(\xi)=\eta}\mu_P(\{\cdot\prec\xi\})\biggr)
\end{gather*}
hold.
\end{claim}
\begin{proof}[Proof of Claim]
 For every $n\ge s$, by Theorem \ref{thm:support}(\ref{head:weight})
and Fact \ref{th:inparticular}({\ref{item:endext}}),
 we have 
 \begin{gather*}
 \bigl((r_{\sP^1,\Gamma_s})_*\overline{|\nu_{P^j,\Gamma_n}|}\bigr)(\{\xi\})
 =\begin{cases}
  |\nu_{P^j,\Gamma_{s}}|(\{\xi\})/|\nu_{P^j,\Gamma_n}|(\Gamma_n) & \text{for every }\xi\in V(\Gamma_s)\setminus\partial\Gamma_s,\\
  \overline{|\nu_{P^j,\Gamma_n}|}(\{\cdot\prec\xi\}) & \text{for every }\xi\in\partial\Gamma_s\setminus\{\infty\},\\
0 & \text{for every }\xi\in\Gamma_s\setminus(V(\Gamma_s)\setminus\{\infty\}), 
 \end{cases}
 \end{gather*}
 and then by $f=(r_{\sP^1,\Gamma})^*(f|\Gamma)$,
 $r_{\sP^1,\Gamma\cap\Gamma_s}
 =r_{\Gamma_s,\Gamma\cap\Gamma_s}\circ r_{\sP^1,\Gamma_s}$,
 and the asymptotic \eqref{eq:total-num},
 we compute
\begin{align*}
\notag\int_{\sP^1}f\overline{|\nu_{P^j,\Gamma_n}|}
\Biggl(&=\int_{\Gamma\cap\Gamma_n}f\bigl((r_{\sP^1,\Gamma})_*\overline{|\nu_{P^j,\Gamma_n}|}\bigr)
=\int_{\Gamma\cap\Gamma_s}f\bigl((r_{\sP^1,\Gamma\cap\Gamma_s})_*\overline{|\nu_{P^j,\Gamma_n}|}\bigr)\\
\notag&=\int_{\Gamma_s}\bigl((r_{\Gamma_s,\Gamma\cap\Gamma_s})^*(f|(\Gamma\cap\Gamma_s))\bigr)\bigl((r_{\sP^1,\Gamma_s})_*\overline{|\nu_{P^j,\Gamma_n}|}\bigr)=\Biggr)\\
\notag&=\sum_{\eta\in\Gamma\cap\Gamma_s}f(\eta)\biggl(
\sum_{\xi\in V(\Gamma_s)\setminus\partial\Gamma_s\text{ such that }r_{\Gamma_s,\Gamma\cap\Gamma_s}(\xi)=\eta}
|\nu_{P^j,\Gamma_{s}}|(\{\xi\})\biggr)
/|\nu_{P^j,\Gamma_n}|(\Gamma_n)
+\label{eq:boundary}\\
\notag &+\sum_{\eta\in\Gamma\cap\Gamma_s}f(\eta)\biggl(\sum_{\xi\in\partial\Gamma_s\setminus\{\infty\}\text{ such that }r_{\Gamma_s,\Gamma\cap\Gamma_s}(\xi)=\eta}
\overline{|\nu_{P^j,\Gamma_n}|}(\{\cdot\prec\xi\})\biggr)\\
&=O(N_n^{-1})+\sum_{\eta\in\Gamma\cap\Gamma_s}f(\eta)\biggl(\sum_{\xi\in\partial\Gamma_s\setminus\{\infty\}\text{ such that }r_{\Gamma_s,\Gamma\cap\Gamma_s}(\xi)=\eta}
\overline{|\nu_{P^j,\Gamma_n}|}(\{\cdot\prec\xi\})\biggr)
\end{align*}
as $n\to\infty$.

For every $\xi\in\partial\Gamma_s\setminus\{\infty\}$
and every $n\ge s$, by Proposition \ref{th:measure-sum}
(and Fact \ref{th:inparticular}(\ref{item:endext})),
we have
$\nu_{P^j,\Gamma_n}(\{\cdot\prec\xi\})
=1-\nu_{P^j,\Gamma_n}(U(\widevec{\xi\infty}))
=1-\nu_{P^j,\Gamma_s}(U(\widevec{\xi\infty}))
=\nu_{P^j,\Gamma_s}(\{\cdot\prec\xi\})
=\nu_{P^j,\Gamma_s}(\{\xi\})$,
so that 
 \begin{gather*}
 |\nu_{P^j,\Gamma_n}|(\{\cdot\prec\xi\})
 =\nu_{P^j,\Gamma_s}(\{\xi\})+2\cdot\nu^-_{P^j,\Gamma_n}(\{\cdot\prec\xi\})
=2\cdot\nu^-_{P^j,\Gamma_n}(\{\cdot\prec\xi\})
+O(1)\quad\text{as }n\to\infty.
\end{gather*}
Hence for every $\xi\in\partial\Gamma_s\setminus\{\infty\}$, 
using the comparison \eqref{eq:comparison}
and the asymptotic \eqref{eq:total-num},
we have
\begin{gather*}
 \overline{|\nu_{P^j,\Gamma_n}|}(\{\cdot\prec\xi\})
 =\frac{2\cdot\nu_n(\{\cdot\prec\xi\})/(d^j-1)+O(1)}{2\cdot N_n/(d^j-1)+O(1)}
 =\frac{\nu_n}{N_n}(\{\cdot\prec\xi\})+O(N_n^{-1})
\quad\text{as }n\to\infty, 
\end{gather*}
which completes the proof of the first desired asymptotic equality.

Since $\supp\mu_P=\sJ_P=\partial\Gamma_\infty\setminus\{\infty\}$,
the second desired equality also holds 
by a computation similar to (and simpler than) that in the above.
\end{proof}

Let us conclude Case 2.
Pick $\eta\in\Gamma\cap\Gamma_s(\subset\Gamma_s\setminus\partial\Gamma_s)$ and $\xi\in\partial\Gamma_s\setminus\{\infty\}$ satisfying
$r_{\Gamma_s,\Gamma\cap\Gamma_s}(\xi)=\eta$. 
Then there is $\xi'\in\Gamma_s$
so close to $\xi$ that 
$\xi\precneqq\xi'\prec\eta$ and $(\xi,\xi')\cap V(\Gamma_s)=\emptyset$.
 By Fact \ref{th:inparticular}({\ref{item:endext}}),
 for every $n\ge s$,
 we have $\nu_n(U(\widevec{\xi\infty})\cap U(\widevec{\xi'\xi}))=0$,
 and since $\supp\mu_P=\sJ_P=\partial\Gamma_\infty\setminus\{\infty\}$,
 we also have $\mu_P(U(\widevec{\xi\infty})\cap U(\widevec{\xi'\xi}))=0$ by Fact \ref{th:inparticular}(\ref{item:exhaust}). 
Hence,
since the characteristic function $\chi_{\{\cdot\prec\xi\}}$ on $\sP^1$ extends to a $C^1$-function on $\sP^1$ which is $\equiv 0$ on $\sP^1\setminus U(\widevec{\xi'\xi})$, 
the second convergence assertion in Theorem \ref{th:convergence} yields
\begin{gather*}
 \frac{\nu_n}{N_n}(\{\cdot\prec\xi\})\to\mu_P(\{\cdot\prec\xi\})\quad\text{as }n\to\infty
\end{gather*}
 (with an $O(n^2d^{-n})$-error estimate from 
 the proof of Theorem \ref{th:convergence}), 
 and we are done in this case by the above Claim.

 {\bfseries Case 3.} 
 Suppose $\Gamma\cap\Gamma_\infty\neq\emptyset$ 
 and $\Gamma\not\subset\sfO_P(\infty)$. Set
 $\Gamma':=\Gamma\cap(\sfO_P(\infty)\sqcup\sJ_P)$,
 which is a subtree in $\Gamma$.
 For every $n\ge 0$, $r_{\sP^1,\Gamma}(\Gamma_n)\subset 
 r_{\sP^1,\Gamma}(\Gamma_\infty)\subset\Gamma'$
 (using Fact \ref{th:inparticular}(\ref{item:exhaust})), and then
 recalling $f=(r_{\sP^1,\Gamma})^*(f|\Gamma)$ and
 $r_{\sP^1,\Gamma'}=r_{\Gamma,\Gamma'}\circ r_{\sP^1,\Gamma}$, we have
 \begin{multline*}
 \int_{\sP^1} f\overline{|\nu_{P^j,\Gamma_n}|}
 =\int_{\Gamma}f\bigl((r_{\sP^1,\Gamma})_*\overline{|\nu_{P^j,\Gamma_n}|}\bigr)\\
 =\int_{\Gamma}\bigl((r_{\Gamma,\Gamma'})^*(f|\Gamma')\bigr)\bigl((r_{\sP^1,\Gamma})_*\overline{|\nu_{P^j,\Gamma_n}|}\bigr)
 =\int_{\Gamma'}f\bigl((r_{\sP^1,\Gamma'})_*\overline{|\nu_{P^j,\Gamma_n}|}\bigr). 
 \end{multline*}
We similarly have
$\int_{\sP^1}f\mu_P=\int_{\Gamma'}f((r_{\sP^1,\Gamma'})_*\mu_P)$
since $\supp\mu_P=:\sJ_P=(\partial\Gamma_\infty)\setminus\{\infty\}$.

Pick $\epsilon>0$, and
also a $C^1$-function $g$ on $\sP^1$ such that
$\sup_{\Gamma'}|f-g\circ r_{\Gamma',\Gamma''}|<\epsilon$
for some subtree $\Gamma''$ in $\Gamma'$ which is
contained in $\sfO_P(\infty)$ (see \cite[Proof of Proposition 5.4]{Baker10}). Then
\begin{gather*}
\sup_{n\ge 0}\Bigl|\int_{\Gamma'}(f-g\circ r_{\Gamma',\Gamma''})
\bigl((r_{\sP^1,\Gamma'})_*\overline{|\nu_{P^j,\Gamma_n}|}\bigr)\Bigr|<\epsilon\quad\text{and}\\
 \Bigl|\int_{\Gamma'}(f-g\circ r_{\Gamma',\Gamma''})\ (r_{\sP^1,\Gamma'})_*\mu_P\Bigr|<\epsilon.
\end{gather*}
Finally, set 
$G:=(r_{\sP^1,\Gamma''})^*(g|\Gamma'')$, 
so that $G=(r_{\sP^1,\Gamma''})^*(G|\Gamma'')$ 
on $\sP^1$, and recall that $\Gamma''\subset\sfO_P(\infty)$. 
 Since $r_{\sP^1,\Gamma''}=r_{\Gamma',\Gamma''}\circ r_{\sP^1,\Gamma'}$, 
 for every $n\ge 0$, we have 
 \begin{gather*}
 \int_{\Gamma'}(g\circ r_{\Gamma',\Gamma''})\bigl((r_{\sP^1,\Gamma'})_*\overline{|\nu_{P^j,\Gamma_n}|}\bigr)
 =\int_{\Gamma''}g\bigl((r_{\sP^1,\Gamma''})_*\overline{|\nu_{P^j,\Gamma_n}|}\bigr)
 =\int_{\sP^1}G\overline{|\nu_{P^j,\Gamma_n}|}. 
 \end{gather*}
We similarly have
$\int_{\Gamma'}(g\circ r_{\Gamma',\Gamma''})\bigl((r_{\sP^1,\Gamma'})_*\mu_P\bigr)
=\int_{\sP^1}G\mu_P$.
Now we are done in this final case since
$\lim_{n\to\infty}\int_{\sP^1}G\overline{|\nu_{P^j,\Gamma_n}|}=\int_{\sP^1}G\mu_P$
by the previous Case 2. \qed

\section*{Appendix: equidistribution of iterated pullbacks without multiplicities}\label{sec:ap}

Let $P\in K[z]$ be a polynomial of degree $d>1$;
the $\sL_n$, $\Gamma_n$, and $\Gamma_\infty$ are defined using $P$.
Recall that for every $n\ge 0$, 
\begin{gather*}
\sL_n:=P^{-n}(\xi_P),\quad
\nu_n:=\sum_{\xi\in \sL_n}\delta_\xi\text{ on }\sP^1,
\quad\text{and}\quad
N_n:=\nu_n(\sP^1)=\#\sL_n\in\{1,\ldots,d^n\}.
\end{gather*}

The following 
equidistribution of iterated pullbacks under $P^n$
of the base point $\xi_P$
{\em without} multiplicities
plays a key role in the proof of Theorem \ref{thm:equi},
and might be of independent interest.
 
\begin{apthm}\label{th:convergence}
If $P$ is non-simple and tame, then
the limit $\lim_{n\to\infty}N_n/d^n>0$ exists,
and moreover
$\lim_{n\to\infty}\nu_n/N_n=\mu_P$ weakly on $\sP^1$.
If $P$ is simple, then for every $n\ge 1$,
$\nu_n\equiv\mu_P$ on $\sP^1$ and $N_n\equiv 1$.
\end{apthm}

\begin{proof}[Proof of Theorem \ref{th:convergence} in the simple case]
 When $P$ is simple, $P^{-1}(\xi_P)=\{\xi_P\}$,
so that $\mu_P=\delta_{\xi_P}$ on $\sP^1$ 
(see Subsection \ref{sec:simpleness}). Moreover,
for every $n\ge 0$, we have already seen
$\nu_n\equiv\delta_{\xi_P}$ and $N_n\equiv 1$
in Section \ref{sec:support}.
\end{proof}

From now on, suppose that $P$ is non-simple and tame. 
Recall also 
\begin{gather*}
 \sC_P:=r_{\sP^1,\Gamma_\infty}
\bigl(\crit_P\cap(\sfO_P(\infty)\setminus\{\infty\})\bigr)
\end{gather*}
in \eqref{eq:critretract},
$\sR_P=\Span(\crit_P)$,
and $\#(\crit_P\setminus\{\infty\})\le d-1$
under the assumption.
By Fact \ref{th:inparticular}(\ref{item:exhaust}), we have 
$r_{\sP^1,\Gamma_\infty}(\crit_P\setminus\sfO_P(\infty))\subset
\sJ_P=\partial\Gamma_\infty\setminus\{\infty\}$,
and set
\begin{gather*}
\kappa:=\#\bigl(r_{\sP^1,\Gamma_\infty}(\crit_P\setminus\sfO_P(\infty))\bigr)
\in\{0,1,\ldots,d-1\}.
\end{gather*}
By Fact \ref{th:inparticular}(\ref{item:exhaust}) and Lemma \ref{lem:tree-vertices}, 
there is $n_0\gg 1$ so large that
\begin{gather*}
\sC_P\subset\Span(\sL_{n_0})\setminus\sL_{n_0}
=\{\cdot\prec\xi_P\}\cap(\Gamma_{n_0}\setminus\partial\Gamma_{n_0})
\end{gather*}
and that if $\kappa>0$, then
for every $n\ge n_0$, $r_{\Gamma_\infty,\Gamma_n}$ restricts
to a bijection $r_{\sP^1,\Gamma_\infty}(\crit_P\setminus\sfO_P(\infty))
\overset{\cong}{\to}
r_{\sP^1,\Gamma_n}(\crit_P\setminus\sfO_P(\infty))$.

If $\kappa=0$, then for every $n\ge n_0$, 
by a Riemann-Hurwitz-type formula (Fact \ref{th:RH}),
we have
$\nu_{n+1}=P^*\nu_{n}$ on $\sP^1$, so that
$\nu_n=(P^{n-n_0})^*\nu_{n_0}$ on $\sP^1$ and that 
$N_n=d^{n-n_0}N_{n_0}$.
Hence $N_n/d^{n-n_0}\equiv N_{n_0}>0$ and,
by \eqref{eq:canonical} (or \eqref{eq:canonicalquant}), we have
\begin{gather*}
 \frac{\nu_n}{N_n}=\frac{(P^{n-n_0})^*\nu_{n_0}}{d^{n-n_0}N_{n_0}}\to\frac{N_{n_0}\cdot\mu_P}{N_{n_0}}=\mu_P\quad\text{weakly on }\sP^1
\text{ as }n\to\infty 
\end{gather*}
(with an $O(d^{-n})$-error estimate for $C^1$-test functions), and we are done in the $\kappa=0$ case.
 
Suppose now that $\kappa>0$. 
Then for every $n\ge n_0$, writing
$r_{\sP^1,\Gamma_{\infty}}\bigl(\crit_P\setminus\sfO_P(\infty)\bigr)=\{\xi_i:i\in\{1,\ldots,\kappa\}\}$,
setting
$\xi^{(n)}_i:=r_{\Gamma_{\infty},\Gamma_n}(\xi_i)$
for each $i\in\{1,\ldots,\kappa\}$,
and noting $r_{\sP^1,\Gamma_n}=r_{\Gamma_\infty,\Gamma_n}\circ r_{\sP^1,\Gamma_\infty}$, 
by Fact \ref{th:inparticular}({\ref{item:endext}})
and a Riemann-Hurwitz-type formula (Fact \ref{th:RH}), 
we compute
\begin{gather*}
 \nu_n=P^*\nu_{n-1}-\sum_{\xi\in r_{\sP^1,\Gamma_n}(\crit_P\setminus\sfO_P(\infty))}\bigl(\deg_\xi (P)-1\bigr)\delta_\xi
 =P^*\nu_{n-1}-\sum_{i=1}^{\kappa}\bigl(\deg_{\xi_i}(P)-1\bigr)\delta_{\xi^{(n)}_i},
\end{gather*}
so recursively
\begin{align}
\notag\nu_n&=(P^{n-n_0})^*\nu_{n_0}-\sum_{\ell=0}^{n-n_0-1}
 \sum_{i=1}^{\kappa}\bigl(\deg_{\xi_i}(P)-1\bigr)(P^{\ell})^*\delta_{\xi^{(n-\ell)}_i}\\
 &=(P^{n-n_0})^*\nu_{n_0}-
 \sum_{i=1}^{\kappa}\bigl(\deg_{\xi_i}(P)-1\bigr)\sum_{\ell=0}^{n-n_0-1}(P^{\ell})^*\delta_{\xi^{(n-\ell)}_i}\quad\text{on }\sP^1,\label{eq:compute}
\end{align}
and then
$N_n=d^{n-n_0}N_{n_0}-(\sum_{i=1}^{\kappa}\bigl(\deg_{\xi_i}(P)-1\bigr))
\cdot\sum_{\ell=0}^{n-n_0-1}d^{\ell}$.
Hence setting
\begin{gather*}
 M_0:=\sum_{i=1}^{\kappa}\bigl(\deg_{\xi_i}(P)-1\bigr)
(\ge\kappa>0), 
\end{gather*}
we have $M_0=dN_{n_0}-N_{n_0+1}$ and
\begin{gather}
 0<\frac{N_n}{d^{n-n_0}}
 =N_{n_0}-M_0\frac{1-d^{-(n-n_0)}}{d-1}\searrow N_{n_0}-\frac{M_0}{d-1}
 =\frac{N_{n_0+1}-N_{n_0}}{d-1}\quad\text{as }n\to\infty,\label{eq:limitratio}
\end{gather}
so that $N_{n_0+1}\ge N_{n_0}$, and indeed $N_{n_0+1}>N_{n_0}$
by $\sL_{n_0+1}\subset\{\cdot\precneqq\xi_P\}\cap\Gamma_\infty$
and Fact \ref{th:inparticular}(\ref{item:nonmaximal}). 
Hence the first assertion in Theorem \ref{th:convergence}
holds; we indeed have
\begin{gather*}
 \lim_{n\to\infty}\frac{N_n}{d^n}
=\frac{N_{n_0}-M_0/(d-1)}{d^{n_0}}\in(0,1).
\end{gather*} 

Let us see the second assertion.
For every $n>n_0$, from \eqref{eq:compute},
we also have
\begin{align}
\notag \frac{\nu_n}{N_n}
 =&\frac{(P^{n-n_0})^*\nu_{n_0}}{N_n}
 -\frac{1}{N_n}\cdot\Bigl(\sum_{i=1}^{\kappa}\bigl(\deg_{\xi_i}(P)-1\bigr)\Bigr)\cdot\Bigl(\sum_{\ell=0}^{n-n_0-1}d^{\ell}\Bigr)\mu_P\\
\notag &-\frac{1}{N_n}\sum_{i=1}^{\kappa}\bigl(\deg_{\xi_i}(P)-1\bigr)\sum_{\ell=0}^{n-n_0-1}
 d^{\ell}\cdot\biggl(\frac{(P^{\ell})^*\delta_{\xi^{(n-\ell)}_i}}{d^{\ell}}-\mu_P\biggr)\\
\notag =&\frac{1}{N_n/d^{n-n_0}}
\biggl(\frac{(P^{n-n_0})^*\nu_{n_0}}{d^{n-n_0}}
 -M_0\cdot\frac{1-d^{-(n-n_0)}}{d-1}\mu_P\biggr)\\
 &-\frac{1}{N_n}\sum_{i=1}^{\kappa}\bigl(\deg_{\xi_i}(P)-1\bigr)\sum_{\ell=0}^{n-n_0-1}
 d^{\ell}\cdot\biggl(\frac{(P^{\ell})^*\delta_{\xi^{(n-\ell)}_i}}{d^{\ell}}-\mu_P\biggr)
\label{eq:withoutmult}
\end{align}
on $\sP^1$, and we note that
by \eqref{eq:canonical} (or \eqref{eq:canonicalquant})
and \eqref{eq:limitratio},
\begin{multline*}
\frac{1}{N_n/d^{n-n_0}}
\biggl(\frac{(P^{n-n_0})^*\nu_{n_0}}{d^{n-n_0}}
 -M_0\cdot\frac{1-d^{-(n-n_0)}}{d-1}\mu_P\biggr)\\
 \to 
\frac{1}{N_{n_0}-M_0/(d-1)}\Bigl(N_{n_0}\cdot\mu_P
-\frac{M_0}{d-1}\mu_P\Bigr)
 =\mu_P\quad\text{weakly on }\sP^1\text{ as }n\to\infty
\end{multline*}
(with an $O(d^{-n})$-error estimate for $C^1$-test functions on $\sP^1$).

We claim that \eqref{eq:withoutmult} 
tends to $0$ weakly on $\sP^1$ as $n\to\infty$
(indeed with an $O(n^2d^{-n})$-error estimate for 
$C^1$-test functions on $\sP^1$);
indeed, for every $i\in\{1,\ldots,\kappa\}$ and 
every $\ell\in\{0,\ldots,n-n_0\}$, 
there is a unique decreasing
sequence $(\xi^{(n-\ell)}_i(t))_{t=0}^{n-j}$ in $(\sP^1,\prec)$ such that 
$\xi^{(n-\ell)}_i(t)\in\sL_t$ 
for every $t\in\{0,\ldots,n-\ell\}$,
\begin{gather*}
\xi^{(n-\ell)}_i(0)=\xi_P,\quad\text{and}\quad
\xi^{(n-\ell)}_i(n-\ell)=\xi^{(n-\ell)}_i 
\end{gather*}
(by Fact \ref{th:inparticular}(\ref{item:ends})).
Then for every $i\in\{1,\ldots,\kappa\}$ 
and every $\ell\in\{0,\ldots,n-n_0-1\}$,
since for each $t\in\{0,\ldots,n-\ell-1\}$, 
$P^t$ restricts to a homeomorphism
$[\xi^{(n-\ell)}_i(t+1),\xi^{(n-\ell)}_i(t)]\to[\xi,\xi_P]$ 
for some $\xi\in\sL_1$
without decreasing $\rho$ strictly
(by Fact \ref{th:polynomial}(\ref{head:monotone})),
we have
\begin{gather*}
\rho\bigl(\xi^{(n-\ell)}_i,\xi_P\bigr)
 \le\sum_{t=0}^{n-\ell-1}\rho\bigl(\xi^{(n-\ell)}_i(t+1),\xi^{(n-\ell)}_i(t)\bigr)
 \le(n-\ell)\cdot\max_{\sL_1}\rho(\,\cdot\,,\xi_P).
\end{gather*}
Consequently, 
for every $C^1$-test function $f$ on $\sP^1$, 
also by \eqref{eq:canonicalquant}
and the first assertion in Theorem \ref{th:convergence},
we have
\begin{multline*}
 \biggl|\int_{\sP^1}f(\cdot)\frac{1}{N_n}\sum_{i=1}^{\kappa}\bigl(\deg_{\xi_i}(P)-1\bigr)\sum_{\ell=0}^{n-n_0-1}d^{\ell}\cdot\biggl(\frac{(P^{\ell})^*\delta_{\xi^{(n-\ell)}_i}}{d^{\ell}}-\mu_P\biggr)(\cdot)\biggr|\\
=
\frac{1}{N_n}\cdot M_0\sum_{\ell=0}^{n-n_0-1}d^{\ell}\cdot\max_{i\in\{1,\ldots,\kappa\}}\biggl|\int_{\sP^1}f\biggl(\frac{(P^{\ell})^*\delta_{\xi^{(n-\ell)}_i}}{d^{\ell}}-\mu_P\biggr)\biggr|\\
\le
\frac{M_0}{N_n}\sum_{\ell=0}^{n-n_0-1}
\biggl(|\Delta f|(\sP^1)\cdot\Bigl(\max_{i\in\{1,\ldots,\kappa\}}\rho\bigl(\xi^{(n-\ell)}_i,\xi_g\bigr)+2\sup_{\sP^1}|g_P|\Bigr)\biggr)\\
\le\frac{M_0}{N_n}\cdot|\Delta f|(\sP^1)\cdot C_P
\sum_{\ell=0}^{n-n_0-1}(n-\ell)
 =O\Bigl(\Bigl(\sum_{\ell=0}^{n-n_0-1}(n-\ell)\Bigr)N_n^{-1}\Bigr)
 =O(n^2d^{-n})\quad\text{as }n\to\infty,
\end{multline*}
setting $C_P:=(\max_{\sL_1}\rho(\,\cdot\,,\xi_P))+(\rho(\xi_g,\xi_P)+2\sup_{\sP^1}|g_P|)/(n_0+1)\in\bR_{>0}$. Hence we are also done 
in the $\kappa>0$ case. \qed

\begin{acknowledgements}
The first author was partially supported by ISF Grant 1226/17.
The second author was partially supported by JSPS Grant-in-Aid 
for Scientific Research (C), 19K03541 and 23K03129,
and (B), 19H01798.
The authors thank Jan Kiwi for mentioning Trucco's work.
\end{acknowledgements}


\begin{thebibliography}{10}

\bibitem{Baker10}
Matthew Baker and Robert Rumely.
\newblock {\em Potential theory and dynamics on the {B}erkovich projective
  line}, volume 159 of {\em Mathematical Surveys and Monographs}.
\newblock American Mathematical Society, Providence, RI, 2010.

\bibitem{Benedetto19}
Robert~L. Benedetto.
\newblock {\em Dynamics in one non-archimedean variable}, volume 198 of {\em
  Graduate Studies in Mathematics}.
\newblock American Mathematical Society, Providence, RI, 2019.

\bibitem{Berkovich90}
Vladimir~G. Berkovich.
\newblock {\em Spectral theory and analytic geometry over non-{A}rchimedean
  fields}, volume~33 of {\em Mathematical Surveys and Monographs}.
\newblock American Mathematical Society, Providence, RI, 1990.

\bibitem{ChambertLoir06}
Antoine Chambert-Loir.
\newblock Mesures et \'equidistribution sur les espaces de {B}erkovich.
\newblock {\em J. Reine Angew. Math.}, 595:215--235, 2006.

\bibitem{Faber13I}
Xander Faber.
\newblock Topology and geometry of the {B}erkovich ramification locus for
  rational functions, {I}.
\newblock {\em Manuscripta Math.}, 142(3-4):439--474, 2013.

\bibitem{Faber13II}
Xander Faber.
\newblock Topology and geometry of the {B}erkovich ramification locus for
  rational functions, {II}.
\newblock {\em Math. Ann.}, 356(3):819--844, 2013.

\bibitem{Favre10}
Charles Favre and Juan Rivera-Letelier.
\newblock Th\'eorie ergodique des fractions rationnelles sur un corps
  ultram\'etrique.
\newblock {\em Proc. Lond. Math. Soc. (3)}, 100(1):116--154, 2010.

\bibitem{FvdP04}
Jean Fresnel and Marius van~der Put.
\newblock {\em Rigid analytic geometry and its applications}, volume 218 of
  {\em Progress in Mathematics}.
\newblock Birkh\"{a}user Boston, Inc., Boston, MA, 2004.

\bibitem{JacobsWilliams16}
K.~{Jacobs} and P.~{Williams}.
\newblock {Iteration and the Minimal Resultant}.
\newblock {\em ArXiv e-prints}, August 2016.

\bibitem{Jacobs17}
Kenneth Jacobs.
\newblock Equidistribution of the crucial measures in non-{A}rchimedean
  dynamics.
\newblock {\em J. Number Theory}, 180:86--138, 2017.

\bibitem{Jonsson15}
Mattias Jonsson.
\newblock Dynamics of {B}erkovich spaces in low dimensions.
\newblock In {\em Berkovich spaces and applications}, volume 2119 of {\em
  Lecture Notes in Math.}, pages 205--366. Springer, Cham, 2015.

\bibitem{Kiwi06}
Jan Kiwi.
\newblock Puiseux series polynomial dynamics and iteration of complex cubic
  polynomials.
\newblock {\em Ann. Inst. Fourier (Grenoble)}, 56(5):1337--1404, 2006.

\bibitem{Kiwi18}
Jan Kiwi and Hongming Nie.
\newblock Indeterminacy loci of iterate maps in moduli space.
\newblock {\em Indiana Univ. Math. J.}, 72(3):969--1026, 2023.

\bibitem{MFK94}
David Mumford, John Fogarty, and Frances Kirwan.
\newblock {\em Geometric invariant theory}, volume~34 of {\em Ergebnisse der
  Mathematik und ihrer Grenzgebiete (2) [Results in Mathematics and Related
  Areas (2)]}.
\newblock Springer-Verlag, Berlin, third edition, 1994.

\bibitem{Okuyama20}
Y\^{u}suke Okuyama.
\newblock Geometric formulas on {R}umely's weight function and crucial measure
  in non-archimedean dynamics.
\newblock {\em Math. Ann.}, 376(3-4):913--956, 2020.

\bibitem{Rivera00}
Juan Rivera-Letelier.
\newblock Dynamique des fonctions rationnelles sur des corps locaux.
\newblock {\em these de doctorat, universite Paris-sud, Orsay}, 2000.

\bibitem{Rivera03II}
Juan Rivera-Letelier.
\newblock Dynamique des fonctions rationnelles sur des corps locaux.
\newblock {\em Ast\'erisque}, (287):xv, 147--230, 2003.
\newblock Geometric methods in dynamics. II.

\bibitem{Rivera03}
Juan Rivera-Letelier.
\newblock Espace hyperbolique {$p$}-adique et dynamique des fonctions
  rationnelles.
\newblock {\em Compositio Math.}, 138(2):199--231, 2003.

\bibitem{Rivera05}
Juan Rivera-Letelier.
\newblock Points p\'eriodiques des fonctions rationnelles dans l'espace
  hyperbolique {$p$}-adique.
\newblock {\em Comment. Math. Helv.}, 80(3):593--629, 2005.

\bibitem{Rumely13}
Robert Rumely.
\newblock The minimal resultant locus.
\newblock {\em Acta Arith.}, 169(3):251--290, 2015.

\bibitem{Rumely17}
Robert Rumely.
\newblock A new equivariant in nonarchimedean dynamics.
\newblock {\em Algebra Number Theory}, 11(4):841--884, 2017.

\bibitem{Silverman98}
Joseph~H. Silverman.
\newblock The space of rational maps on {$\mathbb{P}^1$}.
\newblock {\em Duke Math. J.}, 94(1):41--77, 1998.

\bibitem{STW14}
Lucien Szpiro, Michael Tepper, and Phillip Williams.
\newblock Semi-stable reduction implies minimality of the resultant.
\newblock {\em J. Algebra}, 397:489--498, 2014.

\bibitem{Trucco14}
Eugenio Trucco.
\newblock Wandering {F}atou components and algebraic {J}ulia sets.
\newblock {\em Bull. Soc. Math. France}, 142(3):411--464, 2014.

\end{thebibliography}

\end{document}